\definecolor{lccx}{HTML}{92268F}
\newtheorem{theorem}{Theorem}[section]
\newtheorem{proposition}[theorem]{Proposition}
\newtheorem{lemma}[theorem]{Lemma}
\newtheorem{corollary}[theorem]{Corollary}
\newtheorem{assumption}[theorem]{Assumption}
\theoremstyle{remark}
\newtheorem{remark}[theorem]{Remark}
\crefname{theorem}{Theorem}{Theorems}
\Crefname{theorem}{Theorem}{Theorems}
\crefname{assumption}{Assumption}{Assumptions}
\Crefname{assumption}{Assumption}{Assumptions}
\crefname{lemma}{Lemma}{Lemmas}
\Crefname{lemma}{Lemma}{Lemmas}
\crefname{definition}{Definition}{Definitions}
\Crefname{definition}{Definition}{Definitions}
\crefname{proposition}{Proposition}{Propositions}
\Crefname{proposition}{Proposition}{Propositions}
\crefname{algorithm}{Algorithm}{Algorithms}
\Crefname{algorithm}{Algorithm}{Algorithms}
\crefname{section}{Section}{Sections}
\Crefname{section}{Section}{Sections}
\crefname{appendix}{Appendix}{Appendices}
\Crefname{appendix}{Appendix}{Appendices}
\DeclareMathOperator*{\argmax}{arg\,max}
\DeclareMathOperator*{\dist}{dist}
\DeclareMathOperator*{\TV}{TV}
\DeclareMathOperator*{\pred}{pred}
\DeclareMathOperator*{\ared}{ared}
\DeclareMathOperator*{\BV}{BV}
\DeclareMathOperator*{\BVW}{BV_W}
\DeclareMathOperator*{\supp}{supp}
\newcommand{\N}{\mathbb{N}}
\newcommand{\R}{\mathbb{R}}
\newcommand{\Z}{\mathbb{Z}}
\newcommand{\Ha}{\mathcal{H}}
\newcommand{\weakstarto}{\stackrel{\ast}{\rightharpoonup}}
\newcommand{\crit}{\mathcal{C}}
\newcommand{\calM}{\mathcal{M}}
\newcommand*\dd{\mathop{}\!\mathrm{d}}
\crefname{assumption}{Assumption}{Assumptions}
\Crefname{assumption}{Assumption}{Assumptions}
\crefname{lemma}{Lemma}{Lemmas}
\Crefname{lemma}{Lemma}{Lemmas}
\crefname{definition}{Definition}{Definitions}
\Crefname{definition}{Definition}{Definitions}
\crefname{proposition}{Proposition}{Propositions}
\Crefname{proposition}{Proposition}{Propositions}
\crefname{algorithm}{Algorithm}{Algorithms}
\Crefname{algorithm}{Algorithm}{Algorithms}
\title{Convergence of trust-region algorithms in metric spaces}
\author[1]{Paul Manns}
\affil[1]{Faculty of Mathematics, TU Dortmund University, 44227 Dortmund, Germany, \texttt{paul.manns@tu-dortmund.de}}
\begin{document}
\maketitle
\begin{abstract}
Trust-region algorithms can be applied to very abstract optimization problems
because they do not require a specific direction of descent or gradient.
This has lead to recent interest in them, in particular in the area of integer
optimal control problems, where the infinite-dimensional problem
formulations do not assume vector space structure.

We analyze a trust-region algorithm in the abstract setting of a metric space, a setting in
which integer optimal control problems with total variation regularization can be formulated.
Our analysis avoids a reset of the trust-region radius upon acceptance of the iterates when proving
convergence to stationary points. This reset has been present
in previous analyses of trust-region algorithms for integer optimal control
problems. Our computational benchmark shows that the runtime can be
considerably improved when avoiding this reset, which is now theoretically
justified.
\end{abstract}
\section{Introduction}
In this work, we analyze a trust-region algorithm for optimization 
problems of the form
\begin{gather}\label{eq:p_abstract}
\min_{x \in X} J(x),\tag{P}
\end{gather}
where $(X,d)$ denotes a metric space $X$ with a metric $d$. Trust-region algorithms
are particularly suited for such abstract settings because
they only require a model function and an algorithm
(oracle) that optimizes over this model function in a trust region. In particular, there is no need for a specific \emph{direction of steepest descent}, which is difficult to define
without assuming vector space structure. There has not been
much work on analyzing trust-region algorithms
without vector space structure, where we are aware of the works
\cite{leyffer2022sequential,manns2023on,hahn2023binary,manns2023convergence,sharma2021inversion,terpin2022trust}. Importantly, none of them
explicitly address this lack of structure and 
provide a comprehensive trust-region algorithm analysis
in an abstract setting.

The special cases that motivate us and will provide intuitive guidance through the
article are optimization problems of the form
\begin{gather}\label{eq:q}
\min_{w \in L^2(0,1)} J(w) \coloneqq F(w) + \TV(w)
\quad\text{s.t.}\quad
w(x) \in W \text{ a.e.}\tag{Q}
\end{gather}
with $W = \{w_1,\ldots,w_M\} \subset \Z$, where we assume $w_1 < \ldots < w_M$.
Here, $F$ is the main part of the objective that may contain the
solution operator of a differential equation. The term
$\TV(w) \in [0,\infty]$ denotes the total variation
of $w$, which is the sum of the jump heights of the function $w$.
The finiteness of $W$ implies that the height of a jump
of $w$ is bounded below, which is used later in the analysis of stationarity.
The assumption $W \subset \Z$ is not important for our analysis but keeps \eqref{eq:q}
consistent with the referenced literature. Consequently, such optimization problems
can be categorized as \emph{integer optimal control problems with switching costs} as have, for example, been considered in the
control community \cite{de2019mixed,bestehorn2021mixed,kirches2019generation,gau2017novel} more intensively in recent years. In the situation of \eqref{eq:q}, $(X,d)$ arises as follows if $F$ is bounded below. There is a uniform upper 
bound on $\TV(w_n)$ for iterates $w_n$ produced by a descent 
algorithm, which implies that we can wlog replace the feasible set by
\begin{gather}\label{eq:X_for_Q}
X \coloneqq \{ w \in \BV(0,1)\,:\, w(x) \in W\text{ a.e.\ and } \TV(w) \le F(w_0) + \TV(w_0) - \inf F \},
\end{gather}
where $w_0$ is the initial iterate of the algorithm. Then, we
can choose
\[ d(v,w) \coloneqq \|v - w\|_{L^1}, \]
which is a metric on this space. Because $X$ is uniformly bounded with respect to $\TV$ and $\|\cdot\|_{L^1}$
(note that $W$ is finite), $(X,d)$ is a weakly$^*$ sequentially compact subset of $L^1(0,1)$.
Note that other $L^p$-norms are possible here as well. The finiteness of $W$ implies that
sequences in $X$ that converge in $L^p(0,1)$, $p \ge 1$, also converge in $L^q(0,1)$, $q > p$. The
$\|\cdot\|_{L^1}$-norm is often beneficial here because it has a straightforward
polyhedral description after discretization.

We briefly provide motivation for improved analysis of trust-region
algorithms for \eqref{eq:q}. Among several approaches that exist for tackling problems of the form
\eqref{eq:q} and closely related ones
\cite{buchheim2024parabolic,sager2021mixed,bestehorn2021mixed,bemporad2002master,burger2023gauss},
trust-region algorithms have gained attention recently because they can be meaningfully defined
as descent algorithms for such problems \cite{leyffer2022sequential,hahn2023binary,manns2023convergence,marko2023integer}.
Due to the properties of the $\TV$-seminorm, it is possible to define a meaningful stationarity (first-order optimality)
concept and find stationary points of \eqref{eq:q} by means of a trust-region algorithm \cite{leyffer2022sequential},
which can be used to improve heuristically found feasible points or to compute upper bounds in branch-and-bound approaches like
\cite{buchheim2024parabolic} if \eqref{eq:q} is optimized to global optimality.

The solutions to the trust-region subproblems arising from \eqref{eq:q} can be computed by means of
efficient combinatorial algorithms \cite{severitt2023efficient,marko2023integer}. The convergence analysis of the trust-region algorithm
from \cite{leyffer2022sequential} has been tailored to \eqref{eq:q} and has the drawback that it resets the
trust-region radius in successful iterations, that is, if a computed step is accepted. This may hamper performance in 
practice because subproblems with larger trust-region radii generally have longer compute times than those with small 
trust-region radii and the reset can induce a large number of unsuccessful and unnecessary step computations because the 
reset radius is \emph{just too large}. Moreover, the question if this reset can be dropped without losing guarantees on
the asymptotics of the algorithm has been posed to the author at conferences.

\paragraph{Contribution}
In this work, we provide a convergence proof for a trust-region algorithm in the abstract setting of metric spaces introduced above in \eqref{eq:p_abstract},
where we are able to obtain the typically desired convergence property, namely that accumulation points are stationary. Importantly, we are able to do so
without requiring a reset of the trust-region radius so that we can answer the aforementioned question regarding \eqref{eq:q} positively. In the algorithm
analysis and the problem assumptions, we replace the typically required continuity of a criticality measure by assuming lower semi-continuity of the
criticality measure and a compensation of discontinuities in the criticality measure by means of a corresponding lower bound on the predicted 
reduction for small enough trust-region radii.

To show that this analysis is practically valuable, we verify all imposed assumptions for a class of instances of \eqref{eq:q}.
Moreover, we execute the trust-region algorithm for benchmark instances of \eqref{eq:q} with and without trust-region radius
reset to assess the practical performance impact of our theoretical advancement with respect to \eqref{eq:q}. Avoiding the reset of the trust-region radius
leads to a substantial improvement in runtime (more than 50\,\% on more
than half of our instances) while negatively impacting the quality of the
returned points (objective values of the last accepted iterates) of the algorithm to a lesser extent.

\paragraph{Structure of the remainder}
We first introduce some notation below.
In contrast to the analysis in \cite{leyffer2022sequential}, we base our proof on so-called criticality measures and we
briefly explain how they typically work and why we required a different proof strategy in \cref{sec:changes}.
In \cref{sec:problem} we provide the algorithm and the criticality measure for our model problem class \eqref{eq:q}
to give an intuition which terms can be behind the criticality measure and the predicted reduction in an abstract
setting before starting with the abstract problem analysis.
In \cref{sec:abstract} we consider the abstract setting and variant of the algorithm as well as appropriate assumptions
and prove the convergence of the algorithm under said assumptions. In \cref{sec:verification}, we verify the
assumptions for \eqref{eq:q}. \cref{sec:numerics} contains our computational results and we draw a brief conclusion in \cref{sec:conclusion}.

\paragraph{Notation} We denote the space of $\R^m$-valued Radon measures on a bounded Lipschitz domain $\Omega$ by $\calM(\Omega,\R^m)$
or short $\calM$ if unambiguous from the context. We denote the variation measure of $\mu \in \calM(\Omega,\R^m)$ by $|\mu|$. Similarly,
the function that is given by the (pointwise a.e.) absolute value or norm of $f \in L^1(\Omega,\R^m)$ is denoted by $|f|$ or $\|f\|$
(if $m \ge 2$). We denote the continuous bilinear form associated with the duality of$C_0(\Omega,\R^m)$, the space of continuous functions
that vanish at the boundary of $\Omega$, and $\calM(\Omega,\R^m)$, the space of Radon measures on $\Omega$ by
$\langle \cdot, \cdot\rangle_{\calM, C}$; see \cite[Thm 1.54]{ambrosio2000functions}. Thus, a sequence of Radon measures
$\{\mu_n\}_n \subset \calM(\Omega,\R^m)$ converges weakly$^*$ to some limit $\mu$ if and only if
$\langle \mu_n, \phi\rangle_{\calM, C} \to \langle \mu, \phi\rangle_{\calM, C}$ for all $\phi \in C_0(\Omega,\R^m)$, which we denote by
$\mu_n \weakstarto \mu$. For a differentiable function $F : L^p(\Omega) \to \R$, $p \in [1,\infty)$ with H\"older
conjugate index $p'$, $F'(x)$ has a representative in $L^{p'}(\Omega)$, which we denote by $\nabla F(x)$.
A sequence $\{x_n\}_n$ converges weakly$^*$ in $\BV(\Omega)$ to some limit $x \in \BV(\Omega)$
if and only if $x_n \to x$ in $L^1(\Omega)$ and $\sup_{n} \TV(x_n) < \infty$, which we denote again by
$x_n \weakstarto x$; see \cite[Prop.\ 3.13]{ambrosio2000functions}.
The continuous embedding of a Banach space $X$ into a Banach space $Y$ is denoted by $X \hookrightarrow Y$.

\section{Trust-region algorithm analysis with criticality measures}\label{sec:changes}
A standard tool in the convergence analysis of trust-region algorithms are criticality measures
that are also known as gap functions \cite{larsson1994class} in the literature.
A sensible criticality measure is a non-negative function $\crit : X \to [0,\infty)$ that satisfies $\crit(x) = 0$
if and only if $x$ is stationary, that is if $x$ is feasible and satisfies a (first-order) optimality condition for
\eqref{eq:p_abstract}.

Typical convergence proofs of trust-region algorithms that are based on such criticality measures enforce sufficient decrease
in every iteration, which gives
\[ \liminf_{n\to\infty} \crit(x_n) = 0 \]
over the iterations, see, for example, \cite{toint1997non,manns2023convergence}. Then one typically proceeds with a contradictory
argument, where it is assumed that there exists $\varepsilon > 0$ such that
\[
\limsup_{n\to\infty} \crit(x_n) > \varepsilon > \liminf_{n\to\infty} \crit(x_n) = 0.
\]
Then one shows
that there must exist a Cauchy sequence of iterates $\{x_k\}_k$ such that
\[ \lim_{\ell \to \infty} \crit(x_{k_\ell}) = \varepsilon
\quad\text{and}\quad
\lim_{m\to\infty} \crit(x_{k_m}) = 0
\]
holds for subsequences $\{x_{k_\ell}\}_\ell$, $\{x_{k_m}\}_m$.
Because the whole sequence is Cauchy, this leads to a contradiction to the continuity of $\crit$ and thus proves
\[ \limsup_{n\to\infty} \crit(x_n) = 0. \]
This continuity of $\crit$ also implies that $\crit(\bar{x}) = 0$ holds for all accumulation points $\bar{x}$ of $\{x_n\}_n$,
that is, every accumulation point produced by the algorithm is stationary if the feasible set is closed.

For our guiding example \eqref{eq:q}, it is possible
to define a criticality measure for the stationarity concept
from \cite{leyffer2022sequential}. This criticality measure is not continuous with respect to weak$^*$ convergence in
$\BV(0,1)$, however, implying that a typical argument as sketched above and, for example, presented in and below (A.29)
in \cite{toint1997non} fails. This insight led to the approach in \cite{leyffer2022sequential} with the aforementioned
reset of the trust-region radius in successful iterations. Note that such trust-region reset ideas have also been used
in the analysis of sequential quadratic programming algorithms with trust-region globalization \cite{fletcher2002global}
and for nonsmooth complementarity constraints \cite{kirches2022sequential}.

Here, we take an abstract point of view and consider criticality measures that are only
lower semi-continuous with respect to convergence in the metric $d$ of a metric space $(X,d)$.
We can show under several assumptions depending on $\crit$ and the trust-region subproblem
that the typically desired results
\[ \liminf_{n\to\infty} \crit(x_n) = 0\quad\text{and}\quad\lim_{n \to \infty}\crit(x_n) = 0 \]
can still be obtained, where the latter requires more assumptions on $\crit$ than the former.
Specifically, the latter result can be obtained if discontinuities in the criticality
measure are compensated by a lower bound on the predicted reduction for small enough
trust-region radii. Together with the lower semi-continuity of $\crit$, all accumulation points
are stationary. If $(X,d)$ is a compact space as well, accumulation points necessarily exist.

We verify all of the imposed assumptions for a sensible setting of
\eqref{eq:q} in \cref{sec:verification}. Since the criticality measure is also lower semi-continuous wrt.\
weak$^*$ convergence in $\BV(0,1)$ on the weak$^*$ compact set $X$ from \eqref{eq:X_for_Q},
all accumulation points are stationary and at least one exists.

\section{Criticality measure and algorithm specification for \eqref{eq:q}}\label{sec:problem}
We define $\BVW(0,1) \coloneqq \{ w \in \BV(0,1)\,:\, w(x) \in W \text{ a.e.}\}$.
We first make the following assumption on our problem.
\begin{assumption}\label{ass:general_assumption}
Let $X \subset \BVW(0,1)$ be a bounded set with respect to
$\TV$. We make the following assumptions for a given and fixed $p \ge 1$.
\begin{enumerate}
\item Let $F : L^1(\Omega) \to \R$ be bounded below.
\item Let $F : L^1(\Omega) \to \R$ be differentiable such that
\begin{itemize}
\item $\nabla F : L^1(\Omega) \to L^\infty(\Omega)$ is Lipschitz continuous,
\item $\{\nabla F(w) \,:\, w \in X \}$ is uniformly bounded
in $W^{1,p}(\Omega)$.
\end{itemize}
\end{enumerate}
\end{assumption}
\begin{remark}
There is a slight difference between \cref{ass:general_assumption} 
and the assumptions in \cite{leyffer2022sequential}. Specifically, 
\cref{ass:general_assumption} 2.\ replaces the twice differentiability
of $F$ including uniform boundedness of the Hessian form with respect to the
product of the $L^1$-norms of the arguments (Assumption 4.1 in \cite{leyffer2022sequential}).
While similar in nature, the differentiability requirement in \cite{leyffer2022sequential}
is higher but requires less structural assumptions on $\nabla F(x)$ for $x \in X$.
We find the setting of \cref{ass:general_assumption} sensible and instructive because 
it gives a broad setting for which convergence of a trust-region algorithm can
be verified in \cref{sec:verification} that at the same time does not hinge on a 
second-order analysis.
\end{remark}

Under \cref{ass:general_assumption}, the following first-order optimality condition can be shown for
\eqref{eq:q}. Since the measure-valued distributional derivative $D\bar{w}$
of $\bar{w}$ is a sum of Dirac measures, it means that
$\nabla F(w)$ is zero at the (finitely many) jump points of $w$.
 With a slight abuse of notation, we will consider duality pairings 
\[ \langle |Dw|, g \rangle_{\calM,C}, \]
where $w \in \BVW(0,1)$ and $g \in C([0,1])$. This is sensible because $w \in \BVW(0,1)$
gives that $Dw$ is a weighted sum of Dirac measures that are located on the jump set of $w$
with the weights being the jump heights. The weights are bounded away from zero because $W$ is finite
and thus $\TV(w) < \infty$ implies that $Dw$ is concentrated on finitely many points that are
located strictly inside $(0,1)$ so that $g$ can be changed near the boundary to obtain a function in
$C_0(0,1)$ without changing $g$ on the support of $Dw$ or $|Dw|$.
\begin{proposition}\label{prp:stationarity}
Let $\cref{ass:general_assumption}$ hold for arbitrary subsets $X \subset \BVW(0,1)$ that are bounded
with respect to $\TV$ and any $p \ge 1$. Let $\bar{w} \in X$ be a local minimizer of \eqref{eq:q},
that is there exists $r > 0$ such that
\[ F(\bar{w}) + \TV(\bar{w}) \le F(w) + \TV(w)
   \text{ for all } w \in \BVW(0,1) \text{ with } \|w - \bar{w}\|_{L^1(0,1)} \le r.
\]
Then
\begin{gather}\label{eq:stationarity}
\langle |D\bar{w}|, |\nabla F(\bar{w})| \rangle_{\calM, C} = 0.
\end{gather}
\end{proposition}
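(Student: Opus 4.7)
The plan is to argue by contradiction using a localized jump-shift perturbation. Since $\bar{w} \in \BVW(0,1)$ with $\TV(\bar{w}) < \infty$ and $W$ is finite, $D\bar{w}$ is a finite sum of Dirac measures supported at jump points $t_1 < \ldots < t_K$ strictly interior to $(0,1)$, with weights $\sigma_k \coloneqq \bar{w}(t_k^+) - \bar{w}(t_k^-)$ bounded away from $0$ in absolute value by $\min\{|w_i-w_j| : i \ne j\}$. Because $\bar{w} \in X$, \cref{ass:general_assumption} yields $\nabla F(\bar{w}) \in W^{1,p}(0,1)$, and the one-dimensional Sobolev embedding $W^{1,p}(0,1) \hookrightarrow C([0,1])$, valid for every $p \ge 1$, supplies a continuous representative. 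Thus \eqref{eq:stationarity} is equivalent to the pointwise claim $\nabla F(\bar{w})(t_k) = 0$ for every $k = 1,\ldots,K$, which I would establish by contradiction.

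Assume $\nabla F(\bar{w})(t_k) \neq 0$ for some $k$. For $\varepsilon > 0$ smaller than the distance of $t_k$ to $\{0,1\}$ and to its nearest neighboring jump, I construct a competitor $w_\varepsilon \in \BVW(0,1)$ that displaces the $k$-th jump of $\bar{w}$ by $\pm \varepsilon$: $w_\varepsilon$ coincides with $\bar{w}$ outside a one-sided interval $I_\varepsilon$ of length $\varepsilon$ adjacent to $t_k$ and takes on $I_\varepsilon$ the value that $\bar{w}$ attains on the opposite side of $t_k$. Depending on the shift direction, $w_\varepsilon - \bar{w} = -\sigma_k \mathbf{1}_{I_\varepsilon}$ with $I_\varepsilon = (t_k, t_k+\varepsilon)$ or $+\sigma_k \mathbf{1}_{I_\varepsilon}$ with $I_\varepsilon = (t_k-\varepsilon, t_k)$. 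In both cases $w_\varepsilon \in \BVW(0,1)$, $\TV(w_\varepsilon) = \TV(\bar{w})$ (the jump is merely translated, not annihilated or merged), and $\|w_\varepsilon - \bar{w}\|_{L^1} = |\sigma_k|\varepsilon \le r$ for small $\varepsilon$. The Lipschitz continuity of $\nabla F : L^1 \to L^\infty$ from \cref{ass:general_assumption} applied to the fundamental theorem of calculus along the line $s \mapsto \bar{w} + s(w_\varepsilon - \bar{w})$ yields the quadratic bound
\[
\Bigl| F(w_\varepsilon) - F(\bar{w}) - \int_0^1 \nabla F(\bar{w})(w_\varepsilon - \bar{w})\dd x\Bigr| \le \tfrac{L}{2}\|w_\varepsilon-\bar{w}\|_{L^1}^2 = O(\varepsilon^2),
\]
and continuity of $\nabla F(\bar{w})$ at $t_k$ gives $\int_{I_\varepsilon} \nabla F(\bar{w})\dd x = \nabla F(\bar{w})(t_k)\varepsilon + o(\varepsilon)$. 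Combining these yields $F(w_\varepsilon) - F(\bar{w}) = \mp \sigma_k \nabla F(\bar{w})(t_k)\varepsilon + o(\varepsilon)$, where the leading sign is controlled by the shift direction.

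Since $\sigma_k \nabla F(\bar{w})(t_k) \neq 0$ by assumption, exactly one of the two shift directions yields $F(w_\varepsilon) - F(\bar{w}) = -|\sigma_k \nabla F(\bar{w})(t_k)|\varepsilon + o(\varepsilon)$. Combined with $\TV(w_\varepsilon) = \TV(\bar{w})$, this contradicts the local minimality of $\bar{w}$ for all sufficiently small $\varepsilon > 0$, so $\nabla F(\bar{w})(t_k) = 0$ for every $k$, which is \eqref{eq:stationarity}. I expect the main obstacle to be the interplay between the measure-theoretic pairing in the conclusion and pointwise values of $\nabla F(\bar{w})$ at jumps: one needs the $W^{1,p}$-regularity clause of \cref{ass:general_assumption} (to obtain a pointwise continuous representative) together with the $L^1$-to-$L^\infty$ Lipschitz continuity of $\nabla F$ (to make the second-order remainder asymptotically negligible relative to the first-order gain of the jump-shift). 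The remaining sign bookkeeping across the four combinations of $\operatorname{sign}(\sigma_k)$ and $\operatorname{sign}(\nabla F(\bar{w})(t_k))$ is mechanical once the two admissible shift directions are identified.
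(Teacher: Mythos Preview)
Your proposal is correct and follows essentially the same approach as the paper. The paper's proof defers to Lemma~4.10 in \cite{leyffer2022sequential}, noting precisely the three ingredients you assemble explicitly: that $D\bar{w}$ is a finite sum of Diracs at interior jump points, that the Sobolev embedding $W^{1,p}(0,1)\hookrightarrow C([0,1])$ supplies a continuous representative of $\nabla F(\bar{w})$, and that the Lipschitz continuity of $\nabla F : L^1 \to L^\infty$ combined with the mean value theorem (your integral along the segment) replaces the Hessian hypothesis of the cited lemma to yield an $O(\varepsilon^2)$ remainder; your jump-shift construction with the sign selection is the standard mechanism behind that lemma.
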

\begin{proof}
This follows from Lemma 4.10 in \cite{leyffer2022sequential} together with the aforementioned fact that $Dw$ is a
weighted sum of Dirac measures that are located on the jump set of $w$, which is again
finite because $\TV(w) < \infty$ holds. Then the continuity of $\nabla F(w)$ follows from the
continuous embedding $W^{1,p}(0,1) \hookrightarrow C([0,1])$ and we apply the aforementioned
implicit modification when evaluating the duality pairing.
Note that the proof of Lemma 4.10 in \cite{leyffer2022sequential}
makes an assumption on the Hessian
of $F$ but a close inspection shows that the assumed Lipschitz continuity
$\nabla F : L^1(0,1) \to L^\infty(0,1)$ together with the mean value
theorem also yield the claim therein.
\end{proof}
Consequently, we will refer to feasible points of \eqref{eq:q} that satisfy \eqref{eq:stationarity}
as stationary points; see also \cite[Prop.\ 4.17]{leyffer2022sequential}.
We now provide a trust-region algorithm and its ingredients that differs
from the one presented in \cite{leyffer2022sequential} in that
it does not reset the trust-region radius when the iteration is
successful, that is when
a step is accepted.
Instead, the trust-region radius is doubled in this case.

We start by defining the trust-region subproblem as in Section 3.1 in \cite{leyffer2022sequential} below:
\begin{gather}\label{eq:tr}
\text{{\ref{eq:tr}}}(\bar{w}, g, \Delta) \coloneqq
\left\{
\begin{aligned}
\min_{w \in L^2(\Omega)}\ & (g, w - \bar{w})_{L^2} + \TV(w) - \TV(\bar{w})\\
\text{s.t.}\quad & \|w - \bar{w}\|_{L^1} \le \Delta\text{ and }w(x) \in W
\text{ for a.e.\ } x \in \Omega.
\end{aligned}
\right.
\tag{TR}
\end{gather}
The function $g$ in the trust-region subproblem will be the gradient of $F$ and consequently, this trust-region subproblem
uses a linear model of the smooth part of the objective and keeps the nonsmooth term $\TV$ exactly.
The convergence analysis below abstracts from the structure of the trust-region subproblem and can be carried out for higher-order models of $F$ too if the regularity assumptions on
$F$ carry over to the employed model. Solving discretized instances of 
\eqref{eq:tr} will likely become more involved since they are no longer
integer linear programs in this case and it is unclear if the efficient algorithms from
\cite{severitt2023efficient} can be transferred even if the higher-order terms
in the model are convex.
After solving a trust-region subproblem, predicted and actual reduction arecomputed in trust-region algorithms. To this end,
the reduction of the model function and the reduction of the true objective function for the computed (approximate)
solution to the trust-region subproblem are evaluated. Thus, they can be defined as:
\begin{align*}
\pred(\bar{w},\Delta) &= -\text{{\ref{eq:tr}}}(\bar{w}, \nabla F(\bar{w}), \Delta) \\
\ared(\bar{w},w) &= F(\bar{w}) + \TV(\bar{w}) - F(w) - \TV(w).
\end{align*}
In order to determine if the computed solution to the trust-region subproblem can be accepted as a new iterate
or not, trust-region algorithms check if the actual reduction is at larger than a fraction of the predicted
reduction, where the acceptance ratio $\sigma \in (0,1)$ is usually kept constant over the course of the algorithm.
The acceptance criterion reads:
\begin{gather}\label{eq:accept}
\ared(\bar{w},w) \ge \sigma \pred(\bar{w},\Delta).
\end{gather}
With these ingredients, a trust-region algorithm can be defined by solving the trust-region subproblem,
evaluating predicted and actual reduction, and checking if the step can be accepted. If yes, we
say that the iteration is
\emph{successful}, the computed solution is used as the next iterate and the trust-region radius is doubled
(the doubling occurs until a maximum trust-region radius $\Delta_{\max}$ is reached). If not,
the current iterate is kept as the next iterate and the trust-region radius is halved. This algorithm
is specified in \cref{alg:trm}.
\begin{algorithm}[t]
\caption{Trust-region algorithm to optimize \eqref{eq:q} based on Algorithm 1 in\cite{leyffer2022sequential}
without trust-region radius reset}\label{alg:trm}
\textbf{Input:} $F$, $\nabla F$, $\Delta_{\max} \in (0,\infty]$, $w_0 \in \BVW(\Omega)$, $\sigma \in (0,1)$.

\begin{algorithmic}[1]
\For{$n = 0,1,2\ldots$}
	\State $g_n \gets \nabla F(w_n)$
	\State $\tilde{w}_n \gets \arg \text{{\ref{eq:tr}}}(w_n, g_n, \Delta_n)$ \label{ln:tr}
	\State $\pred_{n} \gets \pred(w_n, g_n, \Delta_n)$
	\State $\ared_{n} \gets \ared(w_n,\tilde{w}_n)$
	\If{$\pred_n = 0$}
		\State Terminate and return $w_n$.
	\ElsIf{$\ared_{n} \ge \sigma \pred_{n}$}\label{ln:sufficient_decrease}
		\State $w_{n+1} \gets \tilde{w}_n$
		\State $\Delta_{n+1} \gets \min\{2\Delta_n,\Delta_{\max}\}$\label{ln:increase_tr}
	\Else
		\State $w_{n+1} \gets w_n$
		\State $\Delta_{n+1} \gets 0.5 \Delta_n$
	\EndIf
\EndFor
\end{algorithmic}
\end{algorithm}
There are a wealth of possible modifications to \cref{alg:trm} to improve performance in practice
and we refer to the book \cite{conn2000trust} for further reading. The important point in this work is that the
previous convergence analysis in \cite{leyffer2022sequential} required a reset of the trust-region radius in
successful iterations to show convergence to stationary points, which effectively means that \cref{alg:trm} 
\cref{ln:increase_tr} is replaced by $\Delta_{n+1} \gets \Delta_0$ in \cite{leyffer2022sequential}.

In the next section, we will analyze an abstract variant of \cref{alg:trm}. As is typical for trust-region algorithms,
see, e.g., \cite{toint1997non}, we will base the convergence analysis on a so-called \emph{criticality measure}
or \emph{gap function} that provides a means to quantify the non-stationarity. In the easiest unconstrained case,
the norm of the gradient can be used. Such a criticality measure is non-negative everywhere, zero if and only if
the point is stationary, and typically continuous.
While this is difficult to achieve in our case, we can show that the criticality measure
arising from \cref{prp:stationarity} is lower semi-continuous on bounded subsets of $\BVW(0,1)$
with respect to weak$^*$ convergence in $\BVW(0,1)$ and implies lower bounds on the predicted reduction near
discontinuities to still obtain the claim. We now provide the criticality measure for \eqref{eq:q} and prove that
it is lower semi-continuous.

For $w \in \BVW(0,1)$, $n(w) \in \N$ is the number of switching points and the switching points are
denoted by $t_i$ for $i \in \{1,\ldots,n(w)\}$, where we always assume that they are ordered
as $t_1 < \ldots < t_{n(w)}$. The \emph{criticality measure} of $w$ reads
\begin{gather}\label{eq:criticality}
\crit(w) = \sum_{i=1}^{n(w)} |\nabla F(w)(t_i)(w(t_i^+) - w(t_i^-))|,
\end{gather}
where $w(t_i^+) = \lim_{t\searrow t_i^+} w(t)$ and
$w(t_i^-) = \lim_{t\nearrow t_i^-} w(t)$. Note that the left and right limits 
are well defined because any function in $\BV(0,1)$ can be represented 
by the difference of two monotone functions. $\crit$ in \eqref{eq:criticality}
is equal to the left-hand side in the claim of 
\cref{prp:stationarity}:
\begin{gather}\label{eq:criticality_var}
\crit(w) = \langle |Dw|, |\nabla F(w)|\rangle_{\calM,C}.
\end{gather}
Clearly $\crit : \BVW(0,1) \to [0,\infty)$ is non-negative and zero if and only if the input is stationary.
By means of the second characterization, we show that $\crit$ is weakly$^*$ sequentially lower semi-continuous
under \cref{ass:general_assumption}. In contrast to the verification of the assumptions for the trust-region
analysis below, this proof also works if we assume a multi-dimensional domain $\Omega$ and we thus provide
it in a multi-dimensional setting as a corollary of the following insight.
\begin{lemma}\label{lem:analysis_of_varmeasure_wstar}
Let $\Omega \subset \R^d$ be a bounded Lipschitz domain. Let $w_n \weakstarto w$ in $\BV(\Omega)$. Then
\begin{gather}\label{eq:Cwstarlsc}
\langle |Dw|, \phi \rangle_{\calM,C}
\le \liminf_{n\to\infty}\,
\langle |Dw_n|, \phi\rangle_{\calM,C} 
\end{gather}
holds for $\phi \in C_0(\Omega)$ with $\phi \ge 0$.
\end{lemma}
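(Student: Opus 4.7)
The plan is to recognize $\langle |Dw|, \phi\rangle_{\calM, C}$ as the integral of a convex, positively 1-homogeneous, continuous integrand against the vector-valued measure $Dw$, and then invoke Reshetnyak's lower semi-continuity theorem (e.g., Theorem 2.38 in \cite{ambrosio2000functions}).

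First, from $w_n \weakstarto w$ in $\BV(\Omega)$, that is $w_n \to w$ in $L^1(\Omega)$ with $\sup_n \TV(w_n) < \infty$, I would establish the weak$^*$ convergence $Dw_n \weakstarto Dw$ of the vector-valued measures in $\calM(\Omega, \R^d)$ by testing against smooth compactly supported vector fields and integrating by parts, extending to $C_0(\Omega, \R^d)$-test fields by the uniform bound on $|Dw_n|(\Omega) = \TV(w_n)$. Second, using the polar decomposition $Dw = \sigma \, |Dw|$ with $|\sigma| = 1$ $|Dw|$-a.e., and analogously $\sigma_n$ for $Dw_n$, the integrand $f : \Omega \times \R^d \to [0, \infty)$ defined by $f(x, y) := \phi(x) \, |y|$ satisfies
\[
\langle |Dw|, \phi\rangle_{\calM, C} = \int_\Omega \phi \, d|Dw| = \int_\Omega f(x, \sigma(x)) \, d|Dw|(x),
\]
and analogously for $w_n$. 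Since $\phi \in C_0(\Omega)$ with $\phi \ge 0$, $f$ is non-negative and continuous (with $f(\cdot, y)$ vanishing on $\partial \Omega$) as well as convex and positively 1-homogeneous in $y$. Third, Reshetnyak's lower semi-continuity theorem applied to $Dw_n \weakstarto Dw$ with this integrand yields
\[
\int_\Omega f(x, \sigma(x)) \, d|Dw|(x) \le \liminf_{n \to \infty} \int_\Omega f(x, \sigma_n(x)) \, d|Dw_n|(x) = \liminf_{n \to \infty} \langle |Dw_n|, \phi\rangle_{\calM, C},
\]
which is exactly \eqref{eq:Cwstarlsc}.

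The only point that requires care is verifying the admissibility conditions of Reshetnyak's theorem for $f$; these hold because $\phi \in C_0(\Omega)$ extends continuously by zero to $\overline{\Omega}$ and the $y$-dependence of $f$ is the norm, which is continuous, convex, and positively 1-homogeneous. An elementary alternative that avoids Reshetnyak proceeds by duality with the divergence: for every $\psi \in C_c^1(\Omega, \R^d)$ with $|\psi(x)| \le \phi(x)$ pointwise, integration by parts gives $\int_\Omega w_n \dvg \psi \, dx = -\langle Dw_n, \psi\rangle_{\calM, C} \le \langle |Dw_n|, \phi\rangle_{\calM, C}$; passing to the limit using $w_n \to w$ in $L^1(\Omega)$ and taking the supremum over admissible $\psi$, which recovers $\langle |Dw|, \phi\rangle_{\calM, C}$ via a mollification/cutoff approximation of the Borel vector field $\phi \, \sigma$, yields the claim.
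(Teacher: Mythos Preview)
Your proof is correct and takes a genuinely different route from the paper. You reduce the claim to a single application of Reshetnyak's lower semi-continuity theorem after observing that $Dw_n \weakstarto Dw$ in $\calM(\Omega,\R^d)$ and that $f(x,y)=\phi(x)\,|y|$ is non-negative, lower semi-continuous, convex and positively $1$-homogeneous in $y$; this is clean, short, and stays within the reference \cite{ambrosio2000functions} that the paper already uses. The paper instead argues directly at the level of the variation measures: it extracts weak$^*$ accumulation points $\nu$ of $\{|Dw_n|\}_n$ via Banach--Alaoglu, shows the set-wise inequality $|Dw|(A) \le \nu(A)$ for all pre-compact $A \subset\subset \Omega$ by testing $Dw$ against constant unit vectors smoothed with continuous cutoff functions, and then passes from subsequences back to the full $\liminf$. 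Your approach is more economical and immediately highlights the structural reason for the result (convexity and $1$-homogeneity of the norm), while the paper's argument is self-contained and avoids invoking Reshetnyak as a black box, making explicit the measure comparison $|Dw| \le \nu$ that underlies the inequality. Your sketched elementary alternative via the supremum over $\psi \in C_c^1(\Omega,\R^d)$ with $|\psi|\le \phi$ is also valid and is in spirit closer to the paper's hands-on approach, though organized around duality rather than set approximation.
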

\begin{proof}
We first note that every weakly$^*$ convergent subsequence $|Dw_{n_k}| \weakstarto \nu$ satisfies
\begin{gather}\label{eq:Cwstarlsc_subseq}
\langle |Dw|, \phi \rangle_{\calM,C}
   \le \langle \nu, \phi\rangle_{\calM,C} = \lim_{k\to \infty} \langle |Dw_{n_k}|, \phi\rangle_{\calM,C}
\end{gather}
if $|Dw| \le \nu$ holds. Because of the inner regularity of Radon measures \cite[Prop.\ 1.43]{ambrosio2000functions}, $|Dw| \le \nu$ holds if and only if $|Dw|(K) \le \nu(K)$ holds for all compact sets $K$.
We first show that $|Dw|(K) \le \nu(K)$ holds for all compact sets $K$ and then show the existence of
suitable subsequences so that \eqref{eq:Cwstarlsc_subseq} implies \eqref{eq:Cwstarlsc}.

\textbf{$|Dw|(A) \le \nu(A)$ holds for all pre-compact sets $A \subset \subset \Omega$:}
Due to the regularity of Radon measures, every measurable set $A \subset \subset \Omega$ can be approximated for all $\varepsilon > 0$
with compact and open sets $K_\varepsilon \subset A \subset U_\varepsilon \subset \subset \Omega$
such that $\mu(U_\varepsilon \setminus K_\varepsilon) \le \varepsilon$ holds for $\mu \in \{|Dw|, \nu\}$.
We define the continuous function $\chi_\varepsilon(x) \coloneqq \frac{\dist(x, \Omega\setminus U_\varepsilon)}{\dist(x, K_\varepsilon) + \dist(x, \Omega\setminus U_\varepsilon)}$ for $x \in \Omega$ so
that $\supp \chi_\varepsilon \subset U_\varepsilon$ and
$\chi_\varepsilon(x) = 1$ for $x \in K_\varepsilon$.

Let $y \in \R^d$ with $\|y\| = 1$ be fixed. Then we can deduce
\begin{align*}
\int_A y \cdot \dd Dw &= 
\int_\Omega \chi_\varepsilon y \cdot \dd Dw + \int_\Omega (\chi_A - \chi_\varepsilon)y \cdot \dd Dw\\
&\leftarrow
\int_\Omega \chi_\varepsilon y \cdot \dd Dw_n + \int_\Omega (\chi_A - \chi_\varepsilon)y \cdot \dd Dw\\
&\le \int_\Omega \chi_\varepsilon \dd |Dw_n| + \int_\Omega \chi_A - \chi_\varepsilon \dd |Dw|
\\
&\to \int_\Omega \chi_\varepsilon \dd \nu + \int_\Omega \chi_A - \chi_\varepsilon \dd |Dw|\\
&= \nu(A)
+ \int_\Omega \chi_A - \chi_\varepsilon \dd |Dw|
+ \int_\Omega \chi_A - \chi_\varepsilon \dd \nu\\
&\le \nu(A) + 2 \varepsilon,
\end{align*}
where the first inequality follows with an approximation
of $\chi_\varepsilon$ as a monotone limit of simple functions.
Since the right-hand side is independent of $y$, we can supremize over
$y \in \R^d$ with $\|y\| = 1$ and obtain
$\|Dw(A)\| \le \nu(A) + 2 \varepsilon$.
Using the fact that $\varepsilon > 0$ was arbitrary, we obtain
\[ \|Dw(A)\| \le \nu(A). \]

It is clear that for every pre-compact set $A \subset \subset \Omega$ we have
\[ |Dw|(A) = \inf\Big\{\sum_{i=1}^m \|Dw(A_i)\| \,:\, A_1,\ldots,A_m \subset \subset \Omega \Big\},  \]
which implies
\[ |Dw|(A) \le |\nu|(A) = \nu(A). \]

\textbf{\eqref{eq:Cwstarlsc_subseq} implies \eqref{eq:Cwstarlsc}:}
For all $\varepsilon > 0$, the Banach--Alaoglu theorem \cite[Thm 1.59]{ambrosio2000functions}
gives the existence of
a weakly$^*$ convergent subsequence $|Dw_{n_k}| \weakstarto \nu$ (sequence and limit depending on $\varepsilon$) such that
\[ \liminf_{n\to\infty} \langle |Dw_n|, \phi\rangle_{\calM,C} \ge \lim_{k\to \infty} \langle |Dw_{n_k}|, \phi\rangle_{\calM,C} - \varepsilon
\]
so that \eqref{eq:Cwstarlsc_subseq} gives
\[ \langle |Dw|, \phi \rangle_{\calM,C} 
   \le \liminf_{n\to\infty}\,\langle |Dw_n|, \phi\rangle_{\calM,C} + \varepsilon
\]
for all $\varepsilon > 0$.
\end{proof}

\begin{theorem}\label{lem:Cwstarlsc}
Let $\Omega \subset \R^d$ be a bounded Lipschitz domain.
Let $X \subset \BVW(\Omega)$ be bounded with respect to $\TV$.
Let $p > d$ if $d \ge 2$ and $p \ge 1$ if $d = 1$. Let \cref{ass:general_assumption} hold.
Let $\crit : \BVW(\Omega) \to [0,\infty)$ be defined through the characterization
in \eqref{eq:criticality}. Then $\crit$ is weakly$^*$ sequentially lower semi-continuous.
\end{theorem}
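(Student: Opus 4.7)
The plan is to combine the characterization $\crit(w) = \langle |Dw|, |\nabla F(w)|\rangle_{\calM,C}$ from \eqref{eq:criticality_var} with \cref{lem:analysis_of_varmeasure_wstar}, after first upgrading convergence of $\nabla F(w_n)$ from $L^\infty$ to uniform convergence so that the ``test function'' is continuous. Given $\{w_n\}_n \subset X$ with $w_n \weakstarto w$ in $\BV(\Omega)$, the $\BV$-weak$^*$ convergence yields $w_n \to w$ in $L^1(\Omega)$, and the Lipschitz continuity $\nabla F : L^1(\Omega) \to L^\infty(\Omega)$ from \cref{ass:general_assumption} implies $\nabla F(w_n) \to \nabla F(w)$ in $L^\infty(\Omega)$. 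Uniform boundedness of $\{\nabla F(w)\}_{w \in X}$ in $W^{1,p}(\Omega)$ together with the Morrey-type embedding $W^{1,p}(\Omega) \hookrightarrow C(\bar\Omega)$ (valid under the hypothesis on $p$) provides continuous representatives, and on continuous representatives $L^\infty$-convergence is uniform convergence on $\bar\Omega$. In particular $|\nabla F(w_n)| \to |\nabla F(w)|$ uniformly on $\bar\Omega$.

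I then split
\[
\crit(w_n) = \langle |Dw_n|, |\nabla F(w)|\rangle_{\calM,C} + \langle |Dw_n|, |\nabla F(w_n)| - |\nabla F(w)|\rangle_{\calM,C}.
\]
The second pairing is bounded in absolute value by $\||\nabla F(w_n)| - |\nabla F(w)|\|_{C(\bar\Omega)}\,|Dw_n|(\Omega)$, which tends to zero by the uniform convergence above and the $\TV$-boundedness of $X$. For the first pairing I would like to invoke \cref{lem:analysis_of_varmeasure_wstar} with $\phi = |\nabla F(w)|$, but that lemma demands $\phi \in C_0(\Omega)$, whereas $|\nabla F(w)|$ only lies in $C(\bar\Omega)$ and need not vanish at $\partial\Omega$.

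This boundary issue is the main technical obstacle, and I resolve it by approximating $|\nabla F(w)|$ monotonically from below by compactly supported continuous functions. Set $\eta_k(x) \coloneqq \min\{1, k\,\dist(x,\partial\Omega)\}$ and $\phi_k \coloneqq \eta_k \cdot |\nabla F(w)|$. Then $\phi_k \in C_c(\Omega) \subset C_0(\Omega)$, $\phi_k \ge 0$, and $\phi_k \nearrow |\nabla F(w)|$ pointwise on $\Omega$. \cref{lem:analysis_of_varmeasure_wstar} applied with $\phi_k$ gives
\[
\langle |Dw|, \phi_k\rangle_{\calM,C} \le \liminf_{n\to\infty}\,\langle |Dw_n|, \phi_k\rangle_{\calM,C} \le \liminf_{n\to\infty}\,\langle |Dw_n|, |\nabla F(w)|\rangle_{\calM,C},
\]
where the last inequality uses $\phi_k \le |\nabla F(w)|$ and the non-negativity of $|Dw_n|$. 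Monotone convergence against the non-negative Radon measure $|Dw|$ sends the left-hand side to $\crit(w)$ as $k\to\infty$. Combined with the vanishing of the error term from the split, this yields $\crit(w) \le \liminf_{n\to\infty} \crit(w_n)$, which is the desired weak$^*$ sequential lower semi-continuity.
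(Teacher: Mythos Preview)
Your argument is correct and follows essentially the same route as the paper: the same splitting of $\crit(w_n)$, the same uniform-convergence-plus-$\TV$-bound control of the error term, and the same monotone approximation of $|\nabla F(w)|$ by $C_0$ cutoffs in order to invoke \cref{lem:analysis_of_varmeasure_wstar}. One small slip: your $\eta_k = \min\{1,k\,\dist(\cdot,\partial\Omega)\}$ is strictly positive on all of $\Omega$ and hence $\phi_k \notin C_c(\Omega)$, but $\phi_k \in C_0(\Omega)$ does hold, which is all \cref{lem:analysis_of_varmeasure_wstar} requires, so the argument goes through unchanged.
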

\begin{proof}
Let $w_n \weakstarto w$ in $\BVW(\Omega)$. Then $w_n \weakstarto w$ in $L^q(\Omega)$ for all $q \ge 1$
because $\BV(\Omega) \hookrightarrow L^{\frac{d}{d-1}}(\Omega)$ \cite[Thm 3.47]{ambrosio2000functions}
and $\{w^n\}_n$ is uniformly in $L^\infty(\Omega)$ since $W$ is finite.
Together with $W^{1,p}(\Omega) \hookrightarrow C(\bar{\Omega})$
\cite[Thm 4.12]{adams2003sobolev}
and \cref{ass:general_assumption} we obtain $\|\nabla F(w_n)\| \to \|\nabla F(w)\|$ in $C(\bar{\Omega})$.
Slightly more involved than the argument above \cref{prp:stationarity}, we can meaningfully define
$\langle |Df|, g\rangle_{\calM, C}$ for $f \in \BVW(\Omega)$ and $g \in C(\bar{\Omega})$ because
$|Df|$ is concentrated on the boundary of the finitely many level sets of $f$ so that 
$\langle |Df|, g\rangle_{\calM, C}$ can be written as a finite sum
\[ \langle |Df|, g\rangle_{\calM, C} = \sum_{i=1}^\frac{M(M - 1)}{2} \delta_i \int_{\Gamma_i} g(t) \dd \Ha^{d-1}(t) \]
for some, where $\Gamma_i$ is a $d-1$-dimensional subset of $\Omega$ and in particular a subset
of the reduced boundary of the level sets of $f$ and $0 \le \delta_i \le \delta_{\max} \coloneqq \max W - \min W$;
see \cite{manns2023on}. Consequently,
\begin{align*}
  \langle |Df|, g\rangle_{\calM, C}
   &\le  \delta_{\max} \sup \big\{ g(t)\,:\, t \in \Omega \big\} \Ha^{d-1}\big(\Gamma_1 \cup \cdots \cup \Gamma_{\nicefrac{1}{2}M(M-1)}\big) \\
   &\le \delta_{\max}\sup \big\{ g(t)\,:\, t \in \bar{\Omega}\big\}\Ha^{d-1}\big(\Gamma_1 \cup \cdots \cup \Gamma_{\nicefrac{1}{2}M(M-1)}\big),
\end{align*}
which gives
\begin{align*}
\langle |Dw_n|, \|\nabla F(w_n)\|\rangle_{\calM,C}
&= \langle |Dw_n|, \|\nabla F(w)\|\rangle_{\calM,C} 
+ \underbrace{\langle |Dw_n|, \|\nabla F(w_n)\| - \|\nabla F(w)\|\rangle_{\calM,C}}_{\to 0},
\end{align*}
where we have applied the estimate above and employed that the sum of the interface lengths between the different
level sets stays bounded for a bounded sequence in $\BVW(\Omega)$; see \cite[Lem.\ 2.1]{manns2023on}.
Thus it remains to show
\[ 
\langle |Dw|,\|\nabla F(w)\| \rangle_{\calM,C}
\le \liminf_{n\to\infty}\,\langle |Dw_n|, \|\nabla F(w)\|\rangle_{\calM,C}
\]

To see the last claim, we multiply $\|\nabla F(w)\|$ with a family of smooth and compactly supported cutoff
functions $\{\psi_k\}_k$ such that $\psi_k \le \psi_{k+1}$ and obtain
\[ \phi_k \to \|\nabla F(w)\| \]
holds pointwise for $\phi_k \coloneqq \psi_k \|\nabla F(w)\|$ and $\phi_k \in C_0(\Omega)$, where
we also have $0 \le \phi_{k} \le \phi_{k+1} \le \|\nabla F(w)\|$ pointwise for all $k$.

For all $k \in \N$ we obtain
\[ \langle |Dw|, \phi_k \rangle_{\calM,C} \le \liminf_{n\to\infty}\,\langle |Dw_n|, \phi_k \rangle_{\calM,C}
\]
from \cref{lem:analysis_of_varmeasure_wstar}. Then Fatou's lemma \cite[Thm 1.20]{ambrosio2000functions}
gives 
\[ \langle |Dw|, \|\nabla F(w)\| \rangle_{\calM,C} 
\le \liminf_{k\to\infty} \liminf_{n\to\infty}\,\langle |Dw_n|, \phi_k\rangle_{\calM,C}.
\]
Using the positivity of $|Dw_n|$ and $0 \le \phi_{k} \le \phi_{k+1} \le \|\nabla F(w)\|$ pointwise,
we obtain
\[ \langle |Dw|, \|\nabla F(w)\| \rangle_{\calM,C} \le \liminf_{n\to\infty}\,\langle |Dw_n|, \|\nabla F(w)\|\rangle_{\calM,C}.
\]
\end{proof}

\section{Abstract trust-region algorithm analysis}\label{sec:abstract}
In this section, we provide a variant of \cref{alg:trm} for optimizing
\eqref{eq:p_abstract} as \cref{alg:trm_abstract}. We impose assumptions on
$\crit : X \to [0,\infty)$ as well as $\pred : X \times [0,\infty) \to \R$ and
$\ared : X \times X \to \R$ that occur in \cref{alg:trm_abstract} and
show $\crit(x_n) \to 0$ in \cref{thm:Ctozero}. This then implies that
all limit points of \cref{alg:trm_abstract} are stationary
if $\crit$ is a lower-semicontinuous criticality measure, that is, $\crit(x) = 0$ if and only if $x$ is 
stationary (and feasible) for \eqref{eq:p_abstract}; see \cref{cor:stationary}.
\begin{algorithm}[t]
	\caption{Abstract trust-region algorithm to optimize \eqref{eq:p_abstract}
	without trust-region radius reset}\label{alg:trm_abstract}
	\textbf{Input:} $x_0 \in X$, $0 < \sigma < 1$, $\Delta_{\max} \in (0,\infty]$.
	
	\begin{algorithmic}[1]
		\For{$n = 0,1,2\ldots$}
		\State $\tilde{x}_n \gets $ Solve trust-region subproblem.\label{ln:tr}
		\State $\pred_{n} \gets \pred(x_n, \Delta_n)$
		\State $\ared_{n} \gets \ared(x_n,\tilde{x}_n) = J(x_n) - J(\tilde{x}_n)$
		\If{$\pred_n = 0$}
		\State Terminate and return $x_n$.
		\ElsIf{$\ared_{n} \ge \sigma \pred_{n}$}\label{ln:sufficient_decrease}
		\State $x_{n+1} \gets \tilde{x}_n$
		\State $\Delta_{n+1} \gets \min\{2\Delta_n,\Delta_{\max}\}$
		\Else
		\State $x_{n+1} \gets x_n$
		\State $\Delta_{n+1} \gets 0.5 \Delta_n$
		\EndIf
		\EndFor
	\end{algorithmic}
\end{algorithm}
\begin{assumption}\label{ass:jump_nonsmoothness}
Let $(X, d)$ be a metric space. We assume that $\pred$ is monotonically increasing in the second
argument if the first argument is fixed. In addition, we assume the following properties of $\crit$,
$\pred$, and $\ared$.
\begin{enumerate}
	\item There exist $c_0 > 0$, $c_1 \ge 0$, $s \in (0,1)$, and $\underline{\Delta}_a : X \to [0,\infty)$ such that
	\[ \pred(x,\Delta) \ge c_0 \crit(x)\Delta - c_1 \Delta^{1 + s} \]
	for all $\Delta \le \underline{\Delta}_a(x)$ for all $x \in X$.
	\item In addition to $c_0$, $c_1$, $s$, $\underline{\Delta}_a : X \to [0,\infty)$ from above, there exist
	$c_2 \ge c_1$, $\underline{\Delta}_b > 0$, and $\delta > 0$ such that for
	all iterates $x_n$, $x_{n+1}$ produced by \cref{alg:trm_abstract} and
	\[ R_n \coloneqq (1 - \sigma)\pred(x_n,\Delta_n)  - |\ared(x_n,x_{n+1}) - \pred(x_n,\Delta_n)|
	\]
	we have
	\begin{align*}
	R_n &\ge (1 - \sigma)\Big(c_0 \crit(x_n) \Delta_n - c_2 \Delta_n^{1 + s}\Big)
	&&\text{if } \Delta_n \le \underline{\Delta}_a(x_n),\\
	\pred(x_n,\Delta_n) &\ge \delta 
	&&\text{if } \underline{\Delta}_a(x_n) \le \min\{\Delta_n,\underline{\Delta}_b\},\\
	R_n &\ge \delta
	&&\text{if } \underline{\Delta}_a(x_n) \le \Delta_n \le \underline{\Delta}_b.
	\end{align*}
	\item In addition to the constants defined above, there exist $\underline{\Delta}_c > 0$ and $L > 0$
	such that $d(x_n,x_{n+1}) \le \Delta_n \le \underline{\Delta}_c$ for two subsequent iterates produced
	by \cref{alg:trm_abstract} implies
	\[ \pred(x_n,\Delta_n) \ge \delta \quad\text{or}\quad |\crit(x_n) - \crit(x_{n+1})| \le L d(x_n, x_{n+1}). \]	
\end{enumerate}
\end{assumption}
\begin{remark}\label{rem:additional_terms}
If the $\Delta_n$-dependent
lower bounds on $\pred(x,\Delta)$
and $R_n$ in \cref{ass:jump_nonsmoothness} 1.,2.\ above can be shown with additional negative terms of higher order or if we have $s > 1$
in these estimates, then we can still verify \cref{ass:jump_nonsmoothness} 1.,2.
To this end, we just have
to increase
$c_1$, $c_2$ and reduce
$\underline{\Delta}_a$, $\underline{\Delta}_b$
until we meet the criteria
again (for some $s \in (0,1)$)
because
$\tfrac{\Delta^{q}}{\Delta^{p}}
\to 0$ as $\Delta \to 0$ if $q > p > 0$.
\end{remark}

\Cref{ass:jump_nonsmoothness} 1.\ and 2.\ are a partial substitute for a Cauchy or sufficient decrease condition as is typical in the
convergence analysis of trust-region algorithms. \Cref{ass:jump_nonsmoothness} 1.\ means that the predicted reduction is bounded below
by a scalar multiple of the criticality measure times the trust-region radius provided the trust-region radius is
small enough, implying that the predicted reduction behaves at least proportional to the trust-region radius, thereby enforcing
large enough steps. The term $c_1 \Delta^{1+s}$ can be used to encapsulate higher-order terms as may arise from remainder estimates of
Taylor's theorem. In our case, \emph{small enough} depends on the current iterate and this upper bound is used to model the feasibility limits
of the decrease steps that can be taken based on the value of the criticality measure. For \eqref{eq:q}, this means that a switch can
only be shifted to the left or right until the boundary of the domain is reached or a switch with opposite sign occurs without losing
control over the behavior of the $\TV$-term. In Hilbert space trust-region methods for smooth problems over convex and closed
feasible sets, \cref{ass:jump_nonsmoothness} 1.\ and 2.\ are implied by Taylor's theorem and Cauchy decrease conditions; see (40) and (60)
and the comments in \cite{toint1988global}.

\Cref{ass:jump_nonsmoothness} 2.\ provides a lower bound on the 
the remainder term $R_n$ that needs to be positive for an iteration
to be successful. 
Importantly, it also implies positive lower bounds on the predicted
reduction and $R_n$ if the trust-region radius is small but large enough
with respect to the current iterate. In our algorithm analysis, this will provide a
means to handle the situation that the trust-region radius decreases too fast for
\cref{ass:jump_nonsmoothness} 1.\ to guarantee sufficient decrease.
For \eqref{eq:q}, this will be verified by exploiting that
the removal of a switch decreases the objective significantly if
the trust-region radius is small.

\Cref{ass:jump_nonsmoothness}  3.\ is a conditional continuity assumption that means that if the
criticality measure changes significantly between two close enough iterates, the predicted reduction
is at least a fixed constant. In other words, discontinuities in the criticality measure that might otherwise
lead to $\crit(x_n) \to 0$ too fast are compensated by the behavior of the predicted reduction, which is bounded below
by a fixed constant in this case and thus induces a fixed improvement of the objective value.
In \eqref{eq:q}, this situation can happen when
a new switch occurs from one iterate to the next and thus a new positive term
appears in $\crit(x_{n+1})$ compared to $\crit(x_n)$, see the characterization
in \eqref{eq:criticality}. Since $\crit$ is generally continuous in a Hilbert space trust-region methods for smooth problems over
convex and closed feasible sets, \cref{ass:jump_nonsmoothness} is not necessary in such a setting. This is due to the continuity
of the projection onto convex and convex sets in Hilbert spaces; see (4) and (12) and the comments
in \cite{toint1988global}.

Before starting our analysis of the asymptotics, we provide an auxiliary lemma.
\begin{lemma}\label{lem:Ctozero_prep}
Let \cref{ass:jump_nonsmoothness} hold. Let $\{n_k\}_k$ be a subsequence of successful
iterations of \cref{alg:trm_abstract}, that is, $\{n_k\}_k \subset \{n \in \N: \ared_n \ge \sigma \pred_n \}$.
If $\limsup_{k\to\infty} \Delta_{n_k} > 0$ and $\liminf_{k\to\infty} \crit(x_{n_k}) > 0$ hold,
then there exists $\delta > 0$ such that
\[ \pred(x_{n_{k_\ell}}, \Delta_{n_{k_\ell}}) \ge \delta \]
holds for an infinite subsequence $\{n_{k_\ell}\}_\ell \subset \{n_k\}_k$.
\end{lemma}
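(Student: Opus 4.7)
The plan is to use the hypotheses $\limsup \Delta_{n_k} > 0$ and $\liminf \crit(x_{n_k}) > 0$ to pass to a further subsequence $\{n_{k_\ell}\}_\ell$ along which $\Delta_{n_{k_\ell}} \ge \bar{\Delta}$ and $\crit(x_{n_{k_\ell}}) \ge \bar{c}$ hold for some fixed $\bar{\Delta}, \bar{c} > 0$. This is immediate from the definitions of limit inferior/superior. Once this subsequence is fixed, the goal reduces to exhibiting a uniform positive lower bound on $\pred(x_{n_{k_\ell}}, \Delta_{n_{k_\ell}})$, for which I would invoke \cref{ass:jump_nonsmoothness} 1.\ and 2.\ via a single pointwise dichotomy.

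First I would fix an $\ell$-independent threshold
\[ D_0 := \min\bigl\{\bar{\Delta},\, \underline{\Delta}_b,\, (c_0 \bar{c} / (2 c_1))^{1/s}\bigr\} > 0, \]
where the last entry is dropped if $c_1 = 0$. By construction, $c_1 D_0^{1+s} \le \tfrac{1}{2} c_0 \bar{c} D_0$ and $D_0 \le \min\{\bar{\Delta}, \underline{\Delta}_b\}$. Then I split the indices $\ell$ into two cases. In the case $\underline{\Delta}_a(x_{n_{k_\ell}}) < D_0$, both inequalities $\underline{\Delta}_a(x_{n_{k_\ell}}) < \bar{\Delta} \le \Delta_{n_{k_\ell}}$ and $\underline{\Delta}_a(x_{n_{k_\ell}}) < \underline{\Delta}_b$ are automatic, so the second bullet of \cref{ass:jump_nonsmoothness} 2.\ yields directly $\pred(x_{n_{k_\ell}}, \Delta_{n_{k_\ell}}) \ge \delta$. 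In the complementary case $\underline{\Delta}_a(x_{n_{k_\ell}}) \ge D_0$, I would exploit the monotonicity of $\pred$ in its second argument to write
\[ \pred(x_{n_{k_\ell}}, \Delta_{n_{k_\ell}}) \ge \pred(x_{n_{k_\ell}}, D_0), \]
then apply \cref{ass:jump_nonsmoothness} 1.\ with $\Delta = D_0 \le \underline{\Delta}_a(x_{n_{k_\ell}})$ and use $\crit(x_{n_{k_\ell}}) \ge \bar{c}$ to obtain the uniform lower bound $c_0 \bar{c} D_0 - c_1 D_0^{1+s} \ge \tfrac{1}{2} c_0 \bar{c} D_0$. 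Taking $\delta' := \min\{\delta, \tfrac{1}{2} c_0 \bar{c} D_0\} > 0$ gives the claim on the entire subsequence $\{n_{k_\ell}\}_\ell$, so no further extraction is needed.

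The main obstacle I anticipate is the iterate-dependence of $\underline{\Delta}_a$: the bound from \cref{ass:jump_nonsmoothness} 1.\ alone degenerates if $\underline{\Delta}_a(x_{n_{k_\ell}}) \to 0$, because then the monotonicity trick only yields $\pred \ge c_0 \bar{c} \underline{\Delta}_a(x_{n_{k_\ell}}) - c_1 \underline{\Delta}_a(x_{n_{k_\ell}})^{1+s}$, which collapses to zero. The resolution is to recognize that \cref{ass:jump_nonsmoothness} 2.\ is designed precisely for this regime and to compare $\underline{\Delta}_a(x_{n_{k_\ell}})$ against an iterate-independent scale $D_0$ chosen small enough that the higher-order term in \cref{ass:jump_nonsmoothness} 1.\ is absorbed by the $c_0 \bar{c} D_0$ term (this is where the exponent $s \in (0,1)$, which by \cref{rem:additional_terms} we are free to assume, is essential). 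The rest is simply bookkeeping to verify that the two conditions $\underline{\Delta}_a \le \Delta_{n_{k_\ell}}$ and $\underline{\Delta}_a \le \underline{\Delta}_b$ entering \cref{ass:jump_nonsmoothness} 2.\ both follow from the choice of $D_0$ in the first case.
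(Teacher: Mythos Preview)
Your proof is correct and follows essentially the same approach as the paper: extract a subsequence along which $\Delta_{n_{k_\ell}}$ is bounded below, then split according to whether $\underline{\Delta}_a(x_{n_{k_\ell}})$ is small (invoke the second line of \cref{ass:jump_nonsmoothness}~2.) or bounded away from zero (invoke \cref{ass:jump_nonsmoothness}~1.\ together with the monotonicity of $\pred$ in its second argument). The only difference is organizational: the paper performs a case distinction on $\liminf_\ell \underline{\Delta}_a(x_{n_{k_\ell}})$ and then extracts a further subsequence in each branch, whereas you compare $\underline{\Delta}_a(x_{n_{k_\ell}})$ pointwise against a fixed, $\ell$-independent threshold $D_0$. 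Your variant is slightly cleaner in that it avoids the second extraction and delivers the uniform lower bound on the entire initially chosen subsequence, but the underlying idea is identical.
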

\begin{proof}
Let $\varepsilon \coloneqq \liminf_{k\to\infty} \crit(x_{n_k})$. Then \cref{ass:jump_nonsmoothness} 1.\ and the
montonicity of $\pred(x_{n_k}, \cdot)$ give
\[
\pred(x_{n_k},\Delta_{n_k}) 
\ge c_0 \varepsilon \min\{\Delta_{n_k}, \underline{\Delta}_a(x_{n_k})\} 
  - c_1 \min\{\Delta_{n_k}, \underline{\Delta}_a(x_{n_k})\}^{1 + s}.
\]
After choosing a suitable infinite subsequence $\{n_{k_\ell}\}_\ell \subset \{n_k\}_k$, we can assume that
there is $\underline{\Delta} > 0$ such that $\Delta_{n_{k_\ell}} \ge \underline{\Delta}$ holds for all
$\ell \in \N$. 

We make a case distinction and start with the case $\liminf_{k\to\infty} \underline{\Delta}_a(x_{n_{k_\ell}}) > 0$.
Then, we reduce $\underline{\Delta}$ and pass to a subsequence (for ease of notation denoted by the same symbol)
such that $\underline{\Delta}_a(x_{n_{k_\ell}}) \ge \underline{\Delta}$ also holds for all $\ell \in \N$.
Consequently, the monotonicity of $\pred(x_{n_{k_\ell}}, \cdot)$ also gives
\[ \pred(x_{n_{k_\ell}}, \Delta_{n_{k_\ell}}) \ge \max\{ c_0 h - c_1 h^{1 + s}\,:\, 0 \le h \le \underline{\Delta} \} \eqqcolon \delta, \]
which is strictly positive because $c_1 h^{1+s}$ is in $o(h)$.

Second, we consider the case $\liminf_{k\to\infty} \underline{\Delta}_a(x_{n_{k_\ell}}) = 0$. In this case, we
pass to a subsequence (for ease of notation denoted by the same symbol) such that
$\Delta_{a}(x_{n_{k_\ell}}) \le \min\{\Delta_{n_{k_\ell}},\underline{\Delta}_b\}$ holds for all $\ell \in \N$.
Then \cref{ass:jump_nonsmoothness} 2.\ gives
\[ \pred(x_{n_{k_\ell}}, \Delta_{n_{k_\ell}}) \ge \delta
\]
for some $\delta > 0$.	
\end{proof}

Although our proof of $\crit(x_n) \to 0$ will not require this intermediate result explicitly, we provide a
short proof that $\liminf_{n\to\infty} \crit(x_n) = 0$ holds over the course of the iterations
for the sake of completeness.
\begin{lemma}\label{lem:liminf}
Let $J$ be bounded below.
Let \cref{ass:jump_nonsmoothness} hold.
Let $\{x_n\}_n$ denote the sequence of iterates produced by \cref{alg:trm}.
Let $\{x_n\}_n$ be infinite. Then $\liminf_{n\to\infty} \crit(x_n) = 0$.
\end{lemma}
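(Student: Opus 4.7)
The plan is to argue by contradiction and suppose $\crit(x_n) \ge \varepsilon$ for some $\varepsilon > 0$ and all $n \ge N$. A repeatedly used observation is that the iterate sequence is infinite, so $\pred_n > 0$ at every step (otherwise the algorithm terminates); consequently, every unsuccessful iteration satisfies $\ared_n < \sigma \pred_n$ with $\pred_n > 0$, which forces $R_n < 0$ strictly. I would then split the analysis into the two cases of finitely and infinitely many successful iterations.

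In the finite case, from some $N' \ge N$ onward every iteration is unsuccessful, so $x_n \equiv \bar x$ with $\crit(\bar x) \ge \varepsilon$ and $\Delta_n \to 0$. If $\underline{\Delta}_a(\bar x) > 0$, then $\Delta_n \le \underline{\Delta}_a(\bar x)$ eventually and the first clause of \cref{ass:jump_nonsmoothness} 2.\ yields $R_n \ge (1-\sigma)(c_0 \varepsilon \Delta_n - c_2 \Delta_n^{1+s})$, which is strictly positive for $\Delta_n$ sufficiently small, contradicting $R_n < 0$. If $\underline{\Delta}_a(\bar x) = 0$, the third clause applies once $\Delta_n \le \underline{\Delta}_b$, giving $R_n \ge \delta > 0$ and the same contradiction.

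In the infinite case, the central step is to establish a uniform lower bound $\Delta_n \ge \Delta^{\min} > 0$ for all $n \ge N$. At any failing iteration $m \ge N$, the third clause of \cref{ass:jump_nonsmoothness} 2.\ rules out $\underline{\Delta}_a(x_m) \le \Delta_m \le \underline{\Delta}_b$ (which would give $R_m \ge \delta > 0$), and within the remaining subcase $\Delta_m \le \underline{\Delta}_a(x_m)$ the first clause forces $\Delta_m > (c_0\varepsilon/c_2)^{1/s}$. Hence $\Delta_m \ge \Delta^{**} \coloneqq \min\{\underline{\Delta}_b,(c_0\varepsilon/c_2)^{1/s}\}$ at every failure, so $\Delta_{m+1} \ge \Delta^{**}/2$; successful iterations only enlarge $\Delta_n$. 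A straightforward induction then produces $\Delta_n \ge \min\{\Delta_N,\Delta^{**}/2\}$ for every $n \ge N$.

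Equipped with this radius bound, $\limsup_k \Delta_{n_k} > 0$ on the infinite subsequence $\{n_k\}_k$ of successful iterations, so \cref{lem:Ctozero_prep} provides a sub-subsequence $\{n_{k_\ell}\}_\ell$ with $\pred_{n_{k_\ell}} \ge \delta' > 0$, and hence $\ared_{n_{k_\ell}} \ge \sigma \delta'$. Summing the per-step decrease $J(x_{n+1}) = J(x_n) - \ared_n$ at successful iterations then drives $J(x_n) \to -\infty$, which contradicts the lower boundedness of $J$. The main obstacle I anticipate is the failure-by-failure case analysis that underlies the uniform lower bound on $\Delta_n$: the second clause of \cref{ass:jump_nonsmoothness} 2.\ bounds $\pred_n$ rather than $R_n$ and therefore does not by itself preclude a failure, so only the first and third clauses can be combined to extract the bound $\Delta_m \ge \Delta^{**}$.
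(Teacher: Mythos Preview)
Your argument is correct, and it takes a genuinely different route from the paper's.

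The paper proceeds by first applying \cref{lem:Ctozero_prep} to rule out $\limsup_k \Delta_{n_k} > 0$, thereby forcing $\Delta_n \to 0$, and then derives contradictions via a further case split on whether $\underline{\Delta}_a(x_{n}) \le \Delta_{n}$ holds infinitely often: if yes, the second and third clauses of \cref{ass:jump_nonsmoothness}~2.\ give $\ared \ge \sigma\delta$ infinitely often; if no, the first clause makes all late iterations successful, so $\Delta_n \to \infty$, contradicting $\Delta_n \to 0$. Your approach inverts this logic in the infinite-success case: instead of first assuming $\Delta_n \to 0$ and exhausting subcases, you analyze failures directly and extract the uniform lower bound $\Delta_m \ge \Delta^{**} = \min\{\underline{\Delta}_b,(c_0\varepsilon/c_2)^{1/s}\}$, which immediately yields $\limsup_k \Delta_{n_k} > 0$ and allows a single appeal to \cref{lem:Ctozero_prep}. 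This is arguably cleaner, and your closing remark is on point: the second clause (the $\pred$ bound) does not by itself preclude a failure, so only the first and third clauses are needed for your failure analysis. Two minor points to tidy up: explicitly mention the trivial case $\Delta_m > \underline{\Delta}_b$ (where $\Delta_m \ge \Delta^{**}$ holds automatically) so that your ``remaining subcase'' language is complete, and handle $c_2 = 0$ separately (the first clause then gives $R_m > 0$ outright, so no failure is possible in that subcase).
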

\begin{proof}
We assume by way of contradiction that there are $\varepsilon > 0$ and $n_0 \in \N$ such that
\[ \crit(x_n) > \varepsilon \]
holds for all $n \ge n_0$. Let the successful iterations be denoted by $\{n_k\}_k$.
If $\limsup_{k\to\infty} \Delta_{n_k} > 0$, then \cref{lem:Ctozero_prep} implies 
\[ \pred(x_{n_{k_\ell}},\Delta_{n_{k_\ell}}) \ge \delta \]
for an infinite subsequence indexed by $\ell$, which gives the contradiction
\[ J(x_0) - \lim_{n\to\infty} J(x_n) \ge
\sum_{\ell=1}^\infty \ared(x_{n_{k_\ell}},x_{{n_{k_\ell}}+1})
\ge \sigma \sum_{\ell=1}^\infty \pred(x_{n_{k_\ell}},\Delta_{{n_{k_\ell}}+1})
\ge \sigma \sum_{\ell=1}^\infty \delta = \infty
\]
because we have assumed that $J$ is bounded below.

Consequently, we have  $\lim_{k\to\infty} \Delta_{n_k} = 0$ for the successful iterations
and in turn $\lim_{n\to\infty} \Delta_{n} = 0$ because the trust-region radius only increases
in successful iterations.

If there is an infinite subsequence $\{n_\ell\}_\ell$ of iterations such that
\[ \underline{\Delta}_a(x_{n_\ell}) \le \Delta_{n_\ell} \to 0 \]
holds, then eventually $\underline{\Delta}_a(x_{n_\ell}) \le \Delta_{n_\ell} \le \underline{\Delta}_b$ 
holds so that for all small enough $\Delta_{n_\ell}$ and thus for all large enough $\ell$
we have $R_{n_\ell} \ge \delta$ for $R_{n_\ell}$ from \cref{ass:jump_nonsmoothness} 2.\
and thus a successful iteration. Consequently, we obtain from \cref{ass:jump_nonsmoothness} 2.\
that
\[ \ared(x_{n_{\ell}},x_{n_\ell +1}) \ge \sigma \pred(x_{n_{\ell}},\Delta_{n_{\ell}}) 
\ge \sigma\delta \]
holds for infinitely many $\ell \in \N$. As above, this contradicts that $J$ is bounded below.

Consequently, we can wlog assume $\Delta_{n} \le \underline{\Delta}_a(x_{n})$
and $\Delta_{n} \le \underline{\Delta}_b$ for all large enough $n$.
Because $\crit(x_n) > \varepsilon$ holds, \cref{ass:jump_nonsmoothness} 2.\ gives
$R_n \ge 0$ for the $R_n$ from \cref{ass:jump_nonsmoothness} 2.\ 
whenever the trust-region satisfies radius satisfies
$\Delta_{n} \le \underline{\Delta}$ for some fixed and small enough $\underline{\Delta} > 0$.
This implies that all iterations $n$ are successful if $n$ is large enough.
Then the trust-region update rule in \cref{alg:trm} gives
$\Delta_n \to \infty$, which contradicts $\Delta_n \to 0$ and closes the proof.
\end{proof}

\begin{theorem}\label{thm:Ctozero}
Let $J$ be bounded below.
Let \cref{ass:jump_nonsmoothness} hold.
Let $\{x_n\}_n$ denote the sequence of iterates produced by \cref{alg:trm}.
Let $\{x_n\}_n$ be infinite. Then $\lim_{n \to \infty} \crit(x_n) = 0$.
\end{theorem}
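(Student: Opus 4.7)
The plan is to argue by contradiction, using \cref{lem:liminf} as a starting point and promoting $\liminf_{n\to\infty}\crit(x_n)=0$ to $\lim_{n\to\infty}\crit(x_n)=0$ via the conditional Lipschitz alternative in \cref{ass:jump_nonsmoothness} 3. We assume $\limsup_{n\to\infty}\crit(x_n)\ge 2\varepsilon>0$ for some $\varepsilon>0$; combined with \cref{lem:liminf} this allows us to extract strictly increasing sequences $\{k_i\}_i$ and $\{\ell_i\}_i$ with $k_i<\ell_i\le k_{i+1}$, $\crit(x_{k_i})\ge 2\varepsilon$, $\crit(x_{\ell_i})<\varepsilon$, and $\crit(x_n)\ge\varepsilon$ for $k_i\le n<\ell_i$. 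These indices carve the iteration into countably many disjoint windows across each of which the criticality measure drops by at least $\varepsilon$.

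Next, we use the boundedness below of $J$, which makes the telescoping sum $\sum_{n\ge 0}\ared(x_n,x_{n+1})\le J(x_0)-\inf J$ finite, to rule out large-$\pred$ events. Any successful iteration with $\pred_n\ge\delta$ (for $\delta$ from \cref{ass:jump_nonsmoothness} 2) contributes at least $\sigma\delta$ to this sum, so only finitely many such iterations occur. For all successful windowed iterations from some $N_0$ onwards, the second and third bullets of \cref{ass:jump_nonsmoothness} 2 therefore force $\Delta_n<\underline{\Delta}_a(x_n)$ or $\underline{\Delta}_a(x_n)>\underline{\Delta}_b$. A case analysis based on the monotonicity of $\pred(x_n,\cdot)$, and, if needed, after shrinking $\underline{\Delta}_b$ so that $c_1\underline{\Delta}_b^{s}\le c_0\varepsilon/2$ as allowed by \cref{rem:additional_terms}, yields the uniform lower bound
\[
\pred(x_n,\Delta_n)\ge \tfrac{c_0\varepsilon}{2}\min\{\Delta_n,\underline{\Delta}_b\}.
\]
Combining with $\ared_n\ge\sigma\pred_n$ and noting that unsuccessful steps contribute zero both to the objective decrease and to $d(x_n,x_{n+1})$ gives summability of $\Delta_n$ along successful windowed iterations.

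Third, we invoke \cref{ass:jump_nonsmoothness} 3. Since the sum in the previous step is finite, $\Delta_n\to 0$ along successful windowed iterations, so for all sufficiently large $i$ we have $\Delta_n\le\underline{\Delta}_c$ and $\pred_n<\delta$ throughout the window $[k_i,\ell_i)$. Because $d(x_n,x_{n+1})\le\Delta_n$ always holds (this is zero on unsuccessful steps and enforced by the trust-region subproblem constraint on successful ones), the conditional Lipschitz alternative of \cref{ass:jump_nonsmoothness} 3 gives $|\crit(x_n)-\crit(x_{n+1})|\le L d(x_n,x_{n+1})\le L\Delta_n$. Telescoping across window $i$ then yields
\[
\varepsilon\le\crit(x_{k_i})-\crit(x_{\ell_i})\le L\sum_{n=k_i}^{\ell_i-1}d(x_n,x_{n+1})\le L\sum_{n=k_i}^{\ell_i-1}\Delta_n,
\]
and summing over the infinitely many windows produces $\infty\le L\sum_n\Delta_n<\infty$, the desired contradiction.

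We expect the main obstacle to be the second paragraph, where one must carefully partition the windowed iterations according to the relative positions of $\Delta_n$, $\underline{\Delta}_a(x_n)$, and $\underline{\Delta}_b$, and extract either a fixed $\sigma\delta$ contribution (which can occur only finitely often) or a contribution of order $\Delta_n$ (summable) on every successful iteration. The flexibility granted by \cref{rem:additional_terms} to absorb higher-order terms into constants and shrink $\underline{\Delta}_b$ is essential for producing the clean linear bound on $\pred_n$ that powers the telescoping argument.
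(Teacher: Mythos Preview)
Your argument is correct and follows the classical Toint-style \emph{window} strategy: extract disjoint index windows across which $\crit$ drops by at least $\varepsilon$, show via the sufficient-decrease estimate that $\sum_{n\in S\cap\text{windows}}\Delta_n<\infty$, and then use the conditional Lipschitz alternative of \cref{ass:jump_nonsmoothness}~3 to telescope $\crit$ across each window, yielding the contradiction $\infty\le L\sum\Delta_n<\infty$. One cosmetic point: in your final display you bound $\sum_{n=k_i}^{\ell_i-1}d(x_n,x_{n+1})$ by $\sum_{n=k_i}^{\ell_i-1}\Delta_n$, which includes the (possibly non-summable) unsuccessful $\Delta_n$. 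You should stop at the $d(x_n,x_{n+1})$ sum, which equals the sum over \emph{successful} windowed iterations and is indeed summable by your second paragraph; you already flagged this when you wrote that unsuccessful steps contribute zero to $d(x_n,x_{n+1})$. Also, shrinking $\underline{\Delta}_b$ so that $c_1\underline{\Delta}_b^s\le c_0\varepsilon/2$ is legitimate not because of \cref{rem:additional_terms} but simply because the implications in \cref{ass:jump_nonsmoothness}~2 remain valid for any smaller $\underline{\Delta}_b$.

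This is a genuinely different route from the paper's proof. The paper does \emph{not} invoke \cref{lem:liminf} or the window decomposition; instead it fixes a single high-criticality successful iteration $n_\ell$ with $\Delta_{n_\ell}$ small, and then tracks the ensuing \emph{forward run} of consecutive successful iterations, exploiting the trust-region doubling $\Delta_{n_\ell+i}=2^i\Delta_{n_\ell}$ together with \cref{ass:jump_nonsmoothness}~3 to control $\crit$ along the run. It shows that one of three stopping events (the radius reaching $\min\{\Delta_{\max},\underline{\Delta}_b,\underline{\Delta}_c\}$, a logarithmic threshold tied to $\crit(x_{n_\ell})$, or $\pred\ge\delta$) must occur before the first unsuccessful iteration, and each event forces a uniform objective drop. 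Your approach is shorter, closer to the standard trust-region template, and does not rely on the specific doubling rule---it would work for any update $\Delta_{n+1}\in[\Delta_n,\Delta_{\max}]$ after acceptance. The paper's approach, by contrast, makes essential use of the geometric growth of $\Delta_n$ in the run to build the quantity $Q(\Delta_{n_\ell}2^{\underline{n}_\ell-n_\ell})$, which lets it avoid \cref{lem:liminf} entirely.
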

\begin{proof}
We begin by following the proof strategy of Theorem 6 in \cite{toint1997non} and Theorem 4.4 in 
\cite{manns2023convergence}. Several modifications are necessary, however, since we have to
substitute the continuity of the criticality measure by the properties asserted
in \cref{ass:jump_nonsmoothness}.
Let $S = \{ n \in \N\,:\, \ared(x_n,x_{n+1}) \ge \sigma \pred(x_n,\Delta_n) \}$, that is,
$S$ is the set of successful iterations. For all $n \in S$ it holds that
\begin{gather}\label{eq:ared_from_ass_1}
\ared(x_n,x_{n+1})
\ge \sigma \pred(x_n, \Delta_n) 
 \ge \sigma\big(c_0 \crit(x_n) \min\{\Delta_n, \underline{\Delta}_a(x_n)\} - c_1 \min\{\Delta_n, \underline{\Delta}_a(x_n)\}^{1 + s}\big)
\end{gather}
by virtue of \cref{ass:jump_nonsmoothness} 1.

We assume by way of contradiction that there are $\varepsilon > 0$ and an 
infinite subsequence of successful iterations $\{n_k\}_{k} \subset S$
such that
\[ \crit(x_{n_k}) > \varepsilon > 0 \]
holds for all $k \in \N$. Our goal is to exclude all situations by showing that each of them would
lead to $J(x_n) \to -\infty$. Note that it is sufficient to consider successful iterations here since
$x_n$ can only change in a successful iteration and there are infinitely many of them by assumption.

If $\limsup_{k\to\infty} \Delta_{n_k} > 0$, then \cref{lem:Ctozero_prep} gives
an infinite subsequence $\{n_{k_\ell}\}_\ell \subset \{n_k\}_k$ such that
\[ 
J(x_0) - \lim_{n\to\infty} J(x_n)
\ge \sum_{k=1}^{\infty} \ared(x_{n_k}, x_{n_k + 1})
\ge \sigma \sum_{k=1}^{\infty} \pred(x_{n_k}, \Delta_{n_k})
\ge \sigma \sum_{\ell=1}^{\infty} \delta = \infty
\]
holds, which contradicts that $J$ is bounded below. Consequently, $\Delta_{n_k} \to 0$ must hold.
We make a case distinction on the relationship between $\Delta_{n_k}$ and
$\underline{\Delta}_a(x_{n_k})$.

\textbf{Case $\underline{\Delta}_a(x_{n_k}) \le \Delta_{n_k}$ for all $k \ge k_0$ and some $k_0 \in \N$:}\quad
In this case, we obtain from $\Delta_{n_k} \to 0$ that there exists $k_1 \ge k_0$ such that
$\underline{\Delta}_a(x_{n_k}) \le \Delta_{n_k} \le \underline{\Delta}_b$ holds
for $\underline{\Delta}_b > 0$ from \cref{ass:jump_nonsmoothness} 2.\ and all $k \ge k_1$.
Consequently, we obtain from \cref{ass:jump_nonsmoothness} 2.\
\[ 
J(x_0) - \lim_{n\to\infty} J(x_n)
\ge \sigma \sum_{k=1}^{\infty} \pred(x_{n_k}, \Delta_{n_k})
\ge \sigma \sum_{k=k_0}^{\infty} \delta = \infty,
\]
which contradicts that $J$ is bounded below.

\textbf{Case $\Delta_{n_{k_\ell}}\le \underline{\Delta}_a(x_{n_{k_\ell}})$ for an infinite subsequence
$\{n_{k_\ell}\}_\ell \subset \{n_k\}_k$:}\quad Since we only need to work with this subsequence now,
we denote it by the symbol $n_\ell$ instead of $n_{k_\ell}$
from now on to avoid notational bloat. Moreover, it is sufficient to consider
the situation $\Delta_{n_\ell} \le \underline{\Delta}_b$ because $\Delta_{n_\ell} \to 0$. Then we obtain from \cref{ass:jump_nonsmoothness} 1.\ and 2.\ 
\begin{align*}
\pred(x_{n_\ell}, \Delta_{n_\ell})
\ge \sigma \sum_{\ell = 1}^{\infty} c_0 \crit(x_{n_\ell}) \Delta_{n_\ell} - c_1\Delta_{n_\ell}^{1 + s}.
\end{align*}
This estimate alone is not enough to obtain the contradiction since $\Delta_{n_\ell}$ might tend to zero
to fast. We therefore set forth to estimate the sum of the actual reductions achieved in iteration $n_\ell$
and subsequent successful iterations in order to again obtain a finite lower bound of achieved
actual reductions which happen infinitely often.

To this end, we again use $\Delta_{n_\ell} \to 0$ to pass to a further
subsequence such that the conditions
\begin{align}
\Delta_{n_\ell}\big(2 L + a(n_\ell)\big) &< \varepsilon < \crit(x_{n_\ell})
\label{eq:Deltankprime_epsilon_Cnkprime} \\
0 &< \delta_1 \coloneqq
\frac{1}{3}c_0 a(n_\ell) \Big(\frac{\varepsilon}{2 L + a(n_\ell)}\Big)^2 
- \frac{1}{2^{1 + s}} c_1 \Big(\frac{\varepsilon}{2 L + a(n_\ell)}\Big)^{1 + s}
\label{eq:qredsum_positivity_below_1} \\
0 &< \delta_2 \coloneqq
\frac{1}{3}c_0 a(n_\ell) \min\{\Delta_{\max},\underline{\Delta}_b,\underline{\Delta}_c\}^2 
- \frac{1}{2^{1 + s}} c_1 \min\{\Delta_{\max},\underline{\Delta}_b,\underline{\Delta}_c\}^{1 + s}
\label{eq:qredsum_positivity_below_2} \\
\Delta_{n_\ell} &\le \min\{\underline{\Delta}_b, \underline{\Delta}_c\}
\end{align}
hold for all $\ell \in \N$, where $\underline{\Delta}_b$, $c_0$, $c_1 > 0$, and $s \in (0,1)$ are from 
\cref{ass:jump_nonsmoothness} 1.,2.\ and
\[ a(n_\ell) \coloneqq
\frac{c_2}{c_0}\Delta_{n_\ell}^{s - 1}
\]
Note that the positivity in \eqref{eq:qredsum_positivity_below_1} and \eqref{eq:qredsum_positivity_below_2}
can be ensured because $a(n_\ell) \to \infty$ for $\Delta_{n_\ell} \to 0$. In \eqref{eq:qredsum_positivity_below_1},
the first summand is asymptotically in $\Theta\big( a(n_\ell)^{-1}\big)$ and the second in
$\Theta\big(a(n_\ell)^{- (1 + s)}\big)$ so that the second term tends to zero faster.

We now consider the successful iterations from $n_\ell$ on until the first unsuccessful iteration
$r_\ell = \min\{ n > n_\ell\,:\, \ared(x_n,x_{n+1}) < \sigma \pred(x_n,\Delta_n)\}$.
The iteration $r_\ell$ is well defined (a finite $\min$ exists) because otherwise the trust-region radius
update in successful iterations contradicts $\Delta_{n_\ell} \to 0$. 

We will show that there always is a smallest iteration $\underline{n}_\ell \in \{n_\ell,\ldots,r_\ell - 1\}$
such that
\begin{gather}\label{eq:n_nkprime_first_log_condition}
\underline{n}_\ell - n_\ell > \log_2\bigg(\frac{\min\{\Delta_{\max},\underline{\Delta}_b,\underline{\Delta}_c\}}{\Delta_{n_\ell}}\bigg)
\end{gather}
or
\begin{gather}\label{eq:n_nkprime_second_log_condition}
\underline{n}_\ell - n_\ell > \log_2\frac{\crit(x_{n_\ell})}{\Delta_{n_\ell}\big(2 L + a(n_\ell)\big)} 
\underset{\eqref{eq:Deltankprime_epsilon_Cnkprime}}\ge 1
\end{gather}
or
\begin{gather}\label{eq:n_nkprime_third_log_condition}
\pred(\underline{n}_\ell,\Delta_{\underline{n}_\ell}) \ge \delta
\end{gather}
holds (whichever happens first). To verify this claim, we assume
that \eqref{eq:n_nkprime_first_log_condition}, \eqref{eq:n_nkprime_second_log_condition}, and
\eqref{eq:n_nkprime_third_log_condition} do not hold in iteration
$\tilde{n} \ge n_\ell$ and then deduce that iteration $\tilde{n}$ is successful, which
implies $\underline{n}_\ell < r_\ell$, the first unsuccessful iteration after $n_\ell$.
As a consequence, $\underline{n}_\ell$ is well defined.

For said iterations $\tilde{n} \in \{n_\ell,\ldots,\underline{n}_\ell\}$ we deduce inductively
\[ d(x_{\tilde{n}},x_{\tilde{n}+1}) \le \Delta_{\tilde{n}} = \Delta_{n_\ell}2^{\tilde{n} - n_\ell}
\le \min\{\underline{\Delta}_b, \underline{\Delta}_c\} \]
from the violation of \eqref{eq:n_nkprime_first_log_condition} and
\begin{gather}\label{eq:Ctilden_estimate}
\begin{aligned}
\crit(x_{\tilde{n}})
&\ge \crit(x_{n_\ell}) - \sum_{i=1}^{\tilde{n} - n_\ell} L d(x_{n_\ell + i}, x_{n_\ell + i + 1}) \\
&\ge \crit(x_{n_\ell}) - L \Delta_{n_\ell} \sum_{i=1}^{\tilde{n} - n_\ell} 2^i \\
&\ge \crit(x_{n_\ell}) - L \Delta_{n_\ell} 2^{\tilde{n} - n_\ell + 1} \\
&= \crit(x_{n_\ell}) - 2 L \Delta_{\tilde{n}}
\end{aligned}
\end{gather}
by virtue of \cref{ass:jump_nonsmoothness} 3.

The violation of \eqref{eq:n_nkprime_second_log_condition} gives
\[ \log_2\frac{\crit(x_{n_\ell})}{\Delta_{n_\ell}\big(2 L + a(n_\ell)\big)} \ge \tilde{n} - n_\ell,
\]
which is equivalent to
\[ \crit(x_{n_\ell}) \ge \Delta_{\tilde{n}}(2 L + a(n_\ell)).
\]
Inserting this into \eqref{eq:Ctilden_estimate}, we obtain
\begin{gather}\label{eq:Cnk_minus_radius_estimate_kappa_radius}
\crit(x_{\tilde{n}}) \ge \crit(x_{n_\ell}) - 2 L \Delta_{\tilde{n}}
= \crit(x_{n_\ell}) - L \Delta_{n_\ell} 2^{\tilde{n} - n_\ell + 1}
\ge a(n_\ell)\Delta_{\tilde{n}}.
\end{gather}
Because \eqref{eq:n_nkprime_third_log_condition} and \eqref{eq:n_nkprime_first_log_condition}
are violated, \cref{ass:jump_nonsmoothness} 2.\ implies
$\Delta_{\tilde{n}} \le \underline{\Delta}_a(x_{\tilde{n}})$.
As a consequence, \cref{ass:jump_nonsmoothness} 2.\ also gives for $R_{\tilde{n}}$ defined therein
\[ R_{\tilde{n}} \ge (1- \sigma)
\big(c_0 a(n_\ell) \Delta_{\tilde{n}}^2 - c_2 \Delta_{\tilde{n}}^{1 + s}\big).
\]
By definition of $R_{\tilde{n}}$, iteration $\tilde{n}$ is successful if we can show
\begin{gather}\label{eq:toshow_for_Rn_nonnegative}
c_0 a(n_\ell) \Delta_{\tilde{n}}^2 - c_2 \Delta_{\tilde{n}}^{1 + s}\ge 0.
\end{gather}
Inspecting the definition of $a(n_\ell)$ gives
that \eqref{eq:toshow_for_Rn_nonnegative} follows if
\[ \Delta_{\tilde{n}} \ge \left(\frac{c_2}{c_0 a(n_\ell)}\right)^{\frac{1}{1 - s}} 
 = \Delta_{n_\ell},
\]
which holds true inductively. Consequently, $\tilde{n}$ is successful and in turn
$\underline{n}_\ell < r_\ell$ is well defined.

To work towards a contradiction, we recall that because $\Delta_{\tilde{n}} \le \underline{\Delta}_a(x_{\tilde{n}})$,
\begin{gather}\label{eq:pred_inequality}
\pred(x_{\tilde{n}},\Delta_{\tilde{n}})
   \ge c_0 \crit(x_{\tilde{n}})\Delta_{\tilde{n}} - c_1 \Delta_{\tilde{n}}^{1 + s}
\end{gather}
holds by virtue of \cref{ass:jump_nonsmoothness} 1. We deduce
\begin{align*}
J(x_{n_\ell}) - J(x_{\underline{n}_\ell})
&\ge \sum_{i=0}^{\underline{n}_\ell - n_\ell - 1} \ared(x_{n_\ell +i },x_{n_\ell + i + 1}) \\
&\ge \sigma \sum_{i=0}^{\underline{n}_\ell - n_\ell - 1} \pred(x_{n_\ell + i},\Delta_{n_\ell + i}) \\
&\underset{\mathclap{\eqref{eq:pred_inequality}}}\ge
\sigma \sum_{i=0}^{\underline{n}_\ell - n_\ell - 1} c_0 C(x_{n_\ell + i})\Delta_{n_\ell + i} - c_1\Delta_{n_\ell + i}^{1+s} \\
&\underset{\mathclap{\eqref{eq:Cnk_minus_radius_estimate_kappa_radius}}}\ge
\sigma\sum_{i=0}^{\underline{n}_\ell - n_\ell - 1} c_0 a(n_\ell) \Delta_{n_\ell + i}^2 - c_1\Delta_{n_\ell + i}^{1+s}\\
&= \sigma \bigg(c_0 a(n_\ell) \Delta_{n_\ell}^2 \sum_{i=0}^{\underline{n}_\ell-n_\ell-1} 4^i 
               - c_1 \Delta_{n_\ell}^{1 + s}\sum_{i=0}^{\underline{n}_\ell-n_\ell-1} \big(2^{1 + s}\big)^i\bigg) 
&&\parbox{7em}{\scriptsize $n_{\ell}+ i$ successful for\\  $i = 0,\ldots,\underline{n}_\ell - n_\ell - 1$\\ $\Rightarrow \Delta_{n_\ell + i} = \Delta_{n_\ell}2^i$} \\
&= \sigma \bigg(\underbrace{\frac{1}{3}c_0 a(n_\ell) (\Delta_{n_\ell} 2^{\underline{n}_\ell-n_\ell})^2 
- \frac{1}{2^{1 + s}-1} c_1 (\Delta_{n_\ell}2^{\underline{n}_\ell-n_\ell} )^{1 + s}}_{\eqqcolon Q(\Delta_{n_\ell}2^{\underline{n}_\ell-n_\ell})}\bigg)
\end{align*}
Because $Q(\Delta_{n_\ell}2^{\underline{n}_\ell-n_\ell})$ is a sum of strict upper bounds on $R_{\tilde{n}}$ for
$\tilde{n} \in \{n_\ell,\ldots,\underline{n}_\ell-1\}$ and the $R_{\tilde{n}}$ are all positive as argued above,
we obtain that $Q(\Delta_{n_\ell}2^{\underline{n}_\ell-n_\ell})$ is positive for $\underline{n}_\ell \ge n_\ell$.
Moreover, $Q$ is monotonically increasing on $Q^{-1}((0,\infty))$ because its minimizer/the zero of
its derivative has a negative value of $Q$ since the derivative of the second term tends to $-\infty$ when
the input of $Q$ tends to zero and $Q(0) = 0$.
Consequently, $Q$ is monotonically increasing if its input is
larger than $\Delta_{n_\ell}$.

Now, we consider the three possible cases for $\underline{n}_\ell$ and show that in every one of them
$J(x_{n_\ell}) - J(x_{\underline{n}_\ell})$ is bounded below a strictly positive constant.
Consequently, none of them can occur infinitely often for the sequence $\{n_\ell\}_\ell$.
Because $\underline{n}_\ell$ is well defined, that is finite, for all $\ell \in \N$ as argued above,
at least one of them has to occur infinitely often, which is not possible and thus gives the
final contradiction concluding the proof.

We start with \eqref{eq:n_nkprime_first_log_condition}. Then we can estimate
\begin{align*}
J(x_{n_\ell}) - J(x_{\underline{n}_\ell})
&\ge \sigma \bigg(\frac{1}{3}c_0 a(n_\ell) \min\{\Delta_{\max},\underline{\Delta}_b,\underline{\Delta}_c\}^2 
- \frac{1}{2^{1 + s}} c_1 \min\{\Delta_{\max},\underline{\Delta}_b,\underline{\Delta}_c\}^{1 + s}\bigg) = \sigma \delta_2 > 0
\end{align*}
by means of \eqref{eq:qredsum_positivity_below_2}.

We continue with \eqref{eq:n_nkprime_second_log_condition}. Then we can estimate
\begin{align*}
J(x_{n_\ell}) - J(x_{\underline{n}_\ell})
&\ge \sigma \bigg(\frac{1}{3}c_0 a(n_\ell) \Big(\frac{\crit(x_{n_\ell})}{2 L + a(n_\ell)}\Big)^2 
- \frac{1}{2^{1+s} - 1} c_1 \Big(\frac{\crit(x_{n_\ell})}{2 L + a(n_\ell)}\Big)^{1 + s}\bigg) \\
&\ge \sigma \bigg(\frac{1}{3}c_0 a(n_\ell) \Big(\frac{\varepsilon}{2 L + a(n_\ell)}\Big)^2 
- \frac{1}{2^{1 + s} - 1} c_1 \Big(\frac{\varepsilon}{2 L + a(n_\ell)}\Big)^{1 + s}\bigg) 
= \sigma \delta_1 > 0
\end{align*}
by means of \eqref{eq:qredsum_positivity_below_1}, where the first inequality follows from the
monotonicity of $Q$ and the second inequality with a similar monotonicity argument for
$t \mapsto a t^{2} - b t^{1 + s}$
for $a$, $b > 0$
and a similar monotonicity argument as above for $Q$.

Finally, \eqref{eq:n_nkprime_third_log_condition} implies
\[ J(x_{n_\ell}) - J(x_{\underline{n}_\ell}) \ge \delta. \]
\end{proof}

\begin{corollary}\label{cor:stationary}
Let $\crit : X \to [0,\infty)$ be lower semi-continuous. Let \cref{ass:jump_nonsmoothness} be satisfied.
If \cref{alg:trm_abstract} produces a finite sequence of iterates, the last iterate $\bar{x}$ satisfies
$\crit(\bar{x}) = 0$. If \cref{alg:trm_abstract} produces an infinite sequence of iterates, every accumulation
point $\bar{x}$ satisfies $\crit(\bar{x}) = 0$. If $(X,d)$ is a compact metric space, there is at least one
accumulation point.
\end{corollary}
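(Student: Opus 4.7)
The plan is to split the proof along the three assertions of the corollary and treat them in order, with the middle one being essentially immediate from \cref{thm:Ctozero} together with the lower semi-continuity hypothesis.

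First I would handle the finite-iteration case. Termination occurs precisely when $\pred_n = 0$, and the goal is to force $\crit(x_n) = 0$. I would begin by excluding a pathological range for $\underline{\Delta}_a(x_n)$: if $\underline{\Delta}_a(x_n) \le \min\{\Delta_n,\underline{\Delta}_b\}$, then \cref{ass:jump_nonsmoothness} 2.\ would give $\pred(x_n,\Delta_n) \ge \delta > 0$, contradicting $\pred_n = 0$; hence $\underline{\Delta}_a(x_n) > 0$ and $\Delta_* \coloneqq \min\{\Delta_n,\underline{\Delta}_a(x_n)\} > 0$. For every $\Delta \in (0, \Delta_*]$, the monotonicity of $\pred(x_n,\cdot)$ in its second argument gives $\pred(x_n,\Delta) \le \pred(x_n,\Delta_n) = 0$, while \cref{ass:jump_nonsmoothness} 1.\ provides the lower bound $\pred(x_n,\Delta) \ge c_0\crit(x_n)\Delta - c_1\Delta^{1+s}$ on the same range. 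Combining these and dividing by $\Delta$ yields $c_0\crit(x_n) \le c_1\Delta^s$; letting $\Delta \to 0^+$ forces $\crit(x_n) = 0$.

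Next I would handle the infinite-iteration case. Under the standing hypotheses that $J$ is bounded below (inherited from \cref{thm:Ctozero}) and \cref{ass:jump_nonsmoothness} holds, \cref{thm:Ctozero} yields $\crit(x_n) \to 0$. For any accumulation point $\bar{x}$ with a subsequence $x_{n_k} \to \bar{x}$ in $(X,d)$, the lower semi-continuity of $\crit$ gives $\crit(\bar{x}) \le \liminf_{k\to\infty} \crit(x_{n_k}) = 0$, and together with $\crit \ge 0$ this yields $\crit(\bar{x}) = 0$. The compactness clause is immediate: sequential compactness of $(X,d)$ produces a convergent subsequence of $\{x_n\}_n$, whose limit is the desired accumulation point.

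The only mildly non-routine step is the finite-iteration case: although $\pred_n = 0$ is the termination condition, it carries no direct information about $\crit(x_n)$. The trick I would use is monotonicity of $\pred$ in $\Delta$ to transport the equality $\pred(x_n,\Delta_n) = 0$ down to arbitrarily small radii where the sharp linear-in-$\Delta$ bound from \cref{ass:jump_nonsmoothness} 1.\ becomes dominant; \cref{ass:jump_nonsmoothness} 2.\ is invoked beforehand purely to rule out the degenerate regime $\underline{\Delta}_a(x_n) \le \min\{\Delta_n,\underline{\Delta}_b\}$, so that this limit argument is available.
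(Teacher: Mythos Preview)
Your proposal is correct and follows essentially the same approach as the paper: the infinite-iteration and compactness claims are handled identically, and the finite-iteration case rests on the same idea (positivity of $\pred$ for small radii via \cref{ass:jump_nonsmoothness}~1.\ together with monotonicity). Your treatment of the finite case is in fact more careful than the paper's one-line assertion, since you explicitly invoke \cref{ass:jump_nonsmoothness}~2.\ to rule out $\underline{\Delta}_a(x_n)=0$ before applying the small-$\Delta$ bound; the paper glosses over this point.
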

\begin{proof}
\Cref{ass:jump_nonsmoothness} 1.\ directly implies that $\pred(x_n,\Delta_n)$ is strictly positive if
$\crit(x_n) > 0$ and \cref{alg:trm_abstract} can thus not terminate if $\crit(x_n) > 0$. The
lower-semicontinuity of $\crit$ and \cref{thm:Ctozero} prove that every accumulation
point is stationary. If there are infinitely many iterations, the compactness of $X$ implies that there
is at least one accumulation point. 
\end{proof}

\section{Verification of \cref{ass:jump_nonsmoothness} for \eqref{eq:q}}\label{sec:verification}
In this section, we provide the arguments that verify \cref{ass:jump_nonsmoothness} 
under \cref{ass:general_assumption} for the setting that the domain $\Omega$ in 
\eqref{eq:q} is one-dimensional, specifically $\Omega = (0,1)$. This verification
is the claim of \cref{thm:verification_ass_jump_nonsmoothness} and in particular we
show several lemmas that make up its proof.

We can first prove that the number of switches of the iterates $w_n$
produced by \cref{alg:trm} stays bounded. With this property at hand,
we can then proceed to verify \cref{ass:jump_nonsmoothness}.
\begin{lemma}\label{lem:number_of_switches_for_large_n}
Let \cref{ass:general_assumption} hold.
Let $\{w_n\}_n$ be the sequence of iterates produces by \cref{alg:trm}.
Then there exists $n_{\max} \in \N$ such that
\[ n(w_n) \le n_{\max} \]
holds for all $n \in \N$. Moreover,
\[ \TV(w_n) \le (\max W - \min W)n_{\max}.
\]
\end{lemma}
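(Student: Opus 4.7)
The plan is to reduce everything to two elementary facts about $\BVW$-functions whose values lie in a finite set $W \subset \Z$: each jump contributes at most $\max W - \min W$ to the total variation, and each jump contributes at least $1$ to the total variation (because distinct elements of $\Z$ differ by at least one). Together these link the switch count $n(w_n)$ and the total variation $\TV(w_n)$, so it suffices to bound $\TV(w_n)$ uniformly in $n$.

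First I would observe that \cref{alg:trm} is a descent algorithm for $J = F + \TV$: in unsuccessful iterations we have $w_{n+1} = w_n$, and in successful iterations the acceptance criterion \eqref{eq:accept} together with $\pred_n \ge 0$ (by the monotonicity of $\pred$ in $\Delta$ and the trivial feasibility of $w_n$ in \eqref{eq:tr}, yielding $\pred_n \ge 0$) gives $J(w_{n+1}) \le J(w_n)$. Hence the sequence $\{J(w_n)\}_n$ is nonincreasing, so
\[
\TV(w_n) \;=\; J(w_n) - F(w_n) \;\le\; J(w_0) - \inf F,
\]
where the right-hand side is finite by \cref{ass:general_assumption} 1.

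Second, since $W \subset \Z$ is finite and the elements are integer-valued, any jump of a $\BVW(0,1)$-function has height at least $1$. Writing $\TV(w_n)$ as the sum of the absolute jump heights of $w_n$ therefore yields
\[
n(w_n) \;\le\; \TV(w_n) \;\le\; J(w_0) - \inf F,
\]
so we may set $n_{\max} \coloneqq \lfloor J(w_0) - \inf F \rfloor$ (or any integer upper bound thereof). For the second claim, the same jump-sum expression gives $\TV(w_n) \le (\max W - \min W)\, n(w_n) \le (\max W - \min W)\, n_{\max}$.

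There is no real obstacle here; the only thing to be slightly careful about is justifying that $\pred_n \ge 0$ so that successful iterations genuinely decrease $J$. This is immediate because $w = w_n$ is feasible for $\text{\ref{eq:tr}}(w_n, \nabla F(w_n), \Delta_n)$ with objective value $0$, so the optimal value is $\le 0$ and hence $\pred_n \ge 0$. Everything else is a one-line combinatorial estimate using $W \subset \Z$.
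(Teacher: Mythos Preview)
Your proposal is correct and follows essentially the same argument as the paper: bound $\TV(w_n)$ uniformly via the descent property $J(w_n)\le J(w_0)$ and the lower bound on $F$, then use that each jump in $\BVW(0,1)$ has height at least $1$ (and at most $\max W-\min W$) to translate between $n(w_n)$ and $\TV(w_n)$. Your explicit justification that $\pred_n\ge 0$ via the feasibility of $w_n$ in \eqref{eq:tr} is a welcome detail the paper leaves implicit.
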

\begin{proof}
Since $\TV(w_n)$ is the sum of the jump heights of $w_n$ for
one-dimensional domains and the minimum jump height
is one, we have $n_{\max} \le \sup_{n \in \N} \TV(w_n)$.

As a consequence, we obtain
\[ n_{\max} \le \sup_{n \in \N} \TV(w_n) \le 
\sup_{n \in \N} F(w_0) + \TV(w_0) - F(w_n)
\le F(w_0) + \TV(w_0) - \inf_{\mathclap{w \in \BVW(0,1)}} F(w),
\]
where the second inequality holds because
\cref{alg:trm} produces a sequence of iterates with monotonically
non-increasing objective values.
The right hand side is finite because $F$ is bounded below by virtue of
by \cref{ass:general_assumption}.

The second claim follows again from the characterization of
$\TV(w_n)$ as the sum of the jump heights of $w_n$.
\end{proof}
Our main result is that \cref{ass:jump_nonsmoothness} holds for \eqref{eq:q}.
Because of the second claim of \cref{lem:number_of_switches_for_large_n}
and the lower semi-continuity of $\TV$, we can wlog replace the feasible
set by the metric space $(X,d)$ defined by
\[
\begin{aligned}
X &\coloneqq \{ w\in \BVW(0,1)\,:\, n(w) \le n_{\max}\},\\
d(u,v) &\coloneqq \|u - v\|_{L^1(0,1)}\quad\text{ for } u,v \in X.
\end{aligned}
\]
\begin{theorem}\label{thm:verification_ass_jump_nonsmoothness}
Let \cref{ass:general_assumption} hold with $X$ as above and $p = \infty$.
Let $\crit$ be as in \eqref{eq:criticality}.
Then \cref{ass:jump_nonsmoothness} holds.
\end{theorem}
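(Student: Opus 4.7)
The plan is to verify \cref{ass:jump_nonsmoothness} 1., 2., 3.\ by explicit feasible constructions in the trust-region subproblem \eqref{eq:tr}, exploiting the discrete structure of $\BVW(0,1)$ (where $W \subset \Z$ implies the minimum absolute jump is at least $1$), the uniform switch bound $n(w_n) \le n_{\max}$ from \cref{lem:number_of_switches_for_large_n}, and the regularity $\nabla F(w) \in W^{1,\infty}(0,1)$ uniformly on $X$, which follows from \cref{ass:general_assumption} with $p = \infty$ and in particular makes $g_w \coloneqq \nabla F(w)$ uniformly Lipschitz on $[0,1]$ via $W^{1,\infty}(0,1) \hookrightarrow C^{0,1}([0,1])$. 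Throughout, let $L_F$ denote the Lipschitz constant of $\nabla F \colon L^1 \to L^\infty$, $L_g$ a uniform Lipschitz constant of $\{g_w : w \in X\}$ on $[0,1]$, and $\delta_{\max} \coloneqq \max W - \min W$.

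For \cref{ass:jump_nonsmoothness} 1., I set $\underline{\Delta}_a(w) \coloneqq \min_i |w(t_i^+) - w(t_i^-)| \cdot \min\{t_i - t_{i-1}, t_{i+1} - t_i\}$ with the convention $t_0 = 0$ and $t_{n(w)+1} = 1$, and for $\Delta \le \underline{\Delta}_a(w)$ I construct $\tilde w$ by shifting a single switch $t_{i^\ast}$ (where $i^\ast$ maximizes $|g_w(t_i)|$) by $\tau$ with $|\tau| = \Delta/|w(t_{i^\ast}^+) - w(t_{i^\ast}^-)|$ and sign chosen to decrease the linear model. This $\tilde w$ is feasible because $\underline{\Delta}_a(w) \le |w(t_{i^\ast}^+) - w(t_{i^\ast}^-)| \min\{t_{i^\ast} - t_{i^\ast-1}, t_{i^\ast+1} - t_{i^\ast}\}$, preserves $\TV$, and satisfies $\|\tilde w - w\|_{L^1} = \Delta$. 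Lipschitz continuity of $g_w$ then yields $-(g_w, \tilde w - w)_{L^2} \ge |g_w(t_{i^\ast})| \Delta - L_g \Delta^2/2$. Combined with the pointwise domination $\crit(w) \le n_{\max} \delta_{\max} |g_w(t_{i^\ast})|$ that follows from \eqref{eq:criticality} and the uniform bound on $n(w)$, this gives $\pred(w,\Delta) \ge c_0 \crit(w) \Delta - c_1 \Delta^2$ with $c_0 = 1/(n_{\max}\delta_{\max})$ and $c_1 = L_g/2$; applying \cref{rem:additional_terms} converts the $\Delta^2$ into $\Delta^{1+s}$ for any $s \in (0,1)$.

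For \cref{ass:jump_nonsmoothness} 2., the first inequality follows from the mean-value estimate $|\ared(w_n, w_{n+1}) - \pred(w_n, \Delta_n)| \le L_F \|w_{n+1} - w_n\|_{L^1}^2 / 2 \le L_F \Delta_n^2/2$ (the $\TV$-terms cancel between $\ared$ and $\pred$), combined with the Part~1 estimate and absorbed via \cref{rem:additional_terms}. The remaining two inequalities address the regime $\underline{\Delta}_a(w_n) \le \Delta_n \le \underline{\Delta}_b$, in which some switch has a tight gap. If the tight gap is to the boundary of $(0,1)$, removing the switch by extending the adjacent value over the short interval has $L^1$-cost $\le \underline{\Delta}_a(w_n) \le \Delta_n$ and $\TV$-decrease $\ge 1$. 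If the tight gap is to a neighboring switch and the three consecutive values $a, b, c$ are non-monotone (i.e., $b$ lies outside $[\min(a,c), \max(a,c)]$), merging by replacing $b$ with whichever of $a, c$ minimizes the $L^1$-cost is feasible and yields $\TV$-decrease $\ge 2$. The main obstacle is the remaining monotone tight-pair case (where $b$ is strictly between $a$ and $c$): every merge or shift preserves $\TV$ there, so the pred comes solely from the linear term and is only $\Theta(|g_w(t_{i^\ast})| \, \underline{\Delta}_a(w_n))$. Resolving this requires a refinement of the definition of $\underline{\Delta}_a$, for instance by ignoring gaps across monotone-adjacent switch pairs (treating them as one effective switch with room measured to the next non-monotone or boundary neighbor), while checking that Part~1 remains valid because shifts into monotone-adjacent neighbors remain feasible. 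With this in place, for $\underline{\Delta}_b$ small enough the linear term $|(g_w, \tilde w - w)_{L^2}| \le \|g_w\|_\infty \underline{\Delta}_b$ is at most $1/2$, yielding $\pred \ge 1/2$; shrinking $\underline{\Delta}_b$ further so that $L_F \underline{\Delta}_b^2/2 \le (1-\sigma)/4$ yields $R_n \ge 1/4$, and both inequalities hold with $\delta = 1/4$.

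For \cref{ass:jump_nonsmoothness} 3., I distinguish whether the switch structure of $w_n$ and $w_{n+1}$ matches, meaning the same number of switches matched left to right with equal signed jumps. In the matching case, writing $\crit$ via \eqref{eq:criticality} as a finite sum and combining Lipschitz continuity of $g_{w_n}$ in the switch position with Lipschitz continuity of $w \mapsto g_w$ from $L^1$ into $L^\infty$, together with $\sum_i |w_n(t_i^+) - w_n(t_i^-)| \cdot |t_i^n - t_i^{n+1}| \le d(w_n, w_{n+1})$, gives $|\crit(w_n) - \crit(w_{n+1})| \le L \, d(w_n, w_{n+1})$ with $L = L_g + \delta_{\max} n_{\max} L_F$. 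In the non-matching case, since the minimum absolute jump is at least one, $|\TV(w_{n+1}) - \TV(w_n)| \ge 1$; choosing $\underline{\Delta}_c$ small enough that $\|g_{w_n}\|_\infty \underline{\Delta}_c < 1$, a net $\TV$-increase is incompatible with $\pred > 0$ at the subproblem optimum, so only $\TV$-decreases of at least one can occur, and then $\pred(w_n, \Delta_n) \ge 1 - \|g_{w_n}\|_\infty \underline{\Delta}_c \ge 1/2$. The key technical obstacle across the proof is the careful treatment of monotone tight pairs in Part~2, which requires a tailored definition of $\underline{\Delta}_a$ that preserves Part~1 while excluding the degenerate configurations that would otherwise break the uniform $\delta$-lower bound on $\pred$.
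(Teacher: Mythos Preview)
Your overall approach mirrors the paper's: it too verifies the three parts of \cref{ass:jump_nonsmoothness} via explicit shift and merge constructions, the mean-value estimate $|\ared-\pred|\le L_F\Delta_n^2$, and a case distinction on whether $\TV$ changes. Two points deserve comment.

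\textbf{Part 2 detour.} Your ``monotone tight pair'' obstacle is self-inflicted by your choice of $\underline{\Delta}_a$. The paper defines $\underline{\Delta}_a(w)$ as the minimum distance from a switch to the nearest switch \emph{of opposite sign} (or to the boundary) from the outset. With that definition, Part~1 goes through because shifting a switch across same-sign neighbours leaves $\TV$ unchanged, and Part~2 is immediate: if $\underline{\Delta}_a(w_n)\le\Delta_n$, then merging a switch into an opposite-sign neighbour (or the boundary) is feasible with $L^1$-cost at most $\underline{\Delta}_a(w_n)$ and drops $\TV$ by at least one. Your proposed refinement (``ignore monotone-adjacent gaps'') is precisely this definition, so you eventually arrive at the paper's argument, but the detour and the hand-wave ``while checking that Part~1 remains valid'' should be replaced by simply adopting the opposite-sign definition from the start.

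\textbf{Part 3 gap.} Your dichotomy breaks down. The implication ``non-matching switch structure $\Rightarrow |\TV(w_{n+1})-\TV(w_n)|\ge 1$'' is false: take $w_n$ with a single $+2$ jump at $t_1$ and $w_{n+1}$ with two $+1$ jumps at $t_1\pm\varepsilon$. Then $\TV(w_n)=\TV(w_{n+1})=2$, the structures do not match in your sense, and $\|w_n-w_{n+1}\|_{L^1}=2\varepsilon$ is arbitrarily small. More generally, equal-$\TV$ configurations with different numbers of switches or different jump-height profiles (e.g.\ $(+1,+2)$ versus $(+2,+1)$) are not covered by either branch of your argument. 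The paper closes this gap by first clustering consecutive same-sign switches of $w_n$ and of $w_{n+1}$, arguing from subproblem optimality and $\Delta_n\le \tfrac{1}{2}G^{-1}$ that the cluster patterns (sign sequence and total jump per cluster) must agree, and then matching individual switches \emph{inside corresponding clusters} after repeating each switch according to its jump height. This repetition-based bipartite matching is what makes $\sum|t_i-t_j|\le\|w_n-w_{n+1}\|_{L^1}$ hold and yields the Lipschitz bound on $\crit$. Without it, your Part~3 is incomplete.
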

\begin{proof}
It is clear that $(X,d)$ is a metric space and $\crit(w) = 0$ if and only if $w$ is stationary for \eqref{eq:q} in the sense of \cite{leyffer2022sequential}, see \S4.2 therein. We prove
\cref{ass:jump_nonsmoothness} as follows.
	\begin{enumerate}
		\item \cref{ass:jump_nonsmoothness} 1.\ holds by virtue of \cref{lem:criticality_implies_pred_bound}
		and \cref{rem:additional_terms}.
		\item \cref{ass:jump_nonsmoothness} 2.\ holds by virtue of \cref{lem:criticality_positive_case_distinction}
		and \cref{rem:additional_terms}.
		\item \cref{ass:jump_nonsmoothness} 3.\ holds by virtue of \cref{lem:criticality_lipschitz}.
	\end{enumerate}
The assumptions of 
\cref{lem:criticality_implies_pred_bound,lem:criticality_positive_case_distinction,lem:criticality_lipschitz}
directly follow from the definition of $X$, $p > 1$ (\cref{lem:criticality_implies_pred_bound,lem:criticality_positive_case_distinction})
and $p = \infty$ (\cref{lem:criticality_lipschitz}), and \cref{ass:general_assumption}.
\end{proof}

\begin{lemma}\label{lem:criticality_implies_pred_bound}
	Let $\{\nabla F(w) \,:\, w \in X\}$ be uniformly bounded in $W^{1,p}(0,1)$ for some $p > 1$.
	Let $\underline{\Delta}_a(w)$ be the minimum distance of neighboring switches of opposite signs
	(or to the boundary). Then we obtain for all $\Delta \le \underline{\Delta}_a(w)$ that
	\[ \pred(w,\Delta) \ge c_0 \crit(w)\Delta - c_1 \Delta^{1 + \frac{p-1}{p}} \]
	holds with $c_0 \coloneqq (n_{\max}|\max W - \min W|)^{-1}$ and
	$c_1 \coloneqq \sup_{w \in X}\|\nabla F(w)'\|_{L^p(0,1)} \tfrac{p}{2p-1}$ 
	for all $w \in X$.
\end{lemma}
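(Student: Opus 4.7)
The plan is to exhibit an explicit feasible competitor $v \in \BVW(0,1)$ for the trust-region subproblem at $(w,\nabla F(w),\Delta)$ whose objective value in \eqref{eq:tr} is at most $-c_0 \crit(w) \Delta + c_1 \Delta^{1+(p-1)/p}$. Since $\crit(w)$ is itself a sum over the switches $t_i$ of $w$ by \eqref{eq:criticality}, I would construct $v$ by shifting each switch individually: set $\varepsilon := \Delta/(n(w)(\max W - \min W))$, and for each $i$ move the switch at $t_i$ by $\sigma_i \varepsilon$, where $\sigma_i \in \{-1,+1\}$ is the direction in which the linear part of the model locally decreases (equivalently, $\sigma_i$ is chosen so that the leading contribution of the $i$-th switch to $(\nabla F(w), v-w)_{L^2}$ is $-|\nabla F(w)(t_i)(w(t_i^+)-w(t_i^-))|\,\varepsilon$). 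In other words, $v$ agrees with $w$ outside of the length-$\varepsilon$ intervals $I_i$ between $t_i$ and $t_i + \sigma_i \varepsilon$, and on each $I_i$ it takes the value of $w$ from the opposite side of the switch.

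Next I verify feasibility. Because every individual shift has magnitude $\varepsilon \le \Delta \le \underline{\Delta}_a(w)$, the shifts can be carried out without any switch leaving $(0,1)$ and without opposite-sign neighbors crossing; this yields $v \in \BVW(0,1)$ and $\TV(v) \le \TV(w)$, since same-sign ``merges'' preserve the total jump height while any opposite-sign cancellations would only decrease $\TV$, which is favorable. The $L^1$ budget is
\[ \|v-w\|_{L^1(0,1)} = \sum_i |w(t_i^+)-w(t_i^-)|\,\varepsilon \le n(w)(\max W - \min W)\,\varepsilon = \Delta, \]
so $v$ is admissible.

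Finally I evaluate the linear part at $v$. On each $I_i$, the construction isolates the leading contribution $-|\nabla F(w)(t_i)(w(t_i^+)-w(t_i^-))|\,\varepsilon$ with a remainder $R_i$ bounded by $|w(t_i^+)-w(t_i^-)| \int_{I_i} |\nabla F(w)(s) - \nabla F(w)(t_i)|\,\dd s$. Writing $\nabla F(w)(s) - \nabla F(w)(t_i) = \int_{t_i}^s (\nabla F(w))'(r)\,\dd r$ and applying Hölder's inequality (which is available thanks to $\nabla F(w) \in W^{1,p}(0,1)$ with $p>1$) yields
\[ |\nabla F(w)(s) - \nabla F(w)(t_i)| \le \|(\nabla F(w))'\|_{L^p(0,1)} |s-t_i|^{(p-1)/p}, \]
hence $|R_i| \le |w(t_i^+)-w(t_i^-)|\,\|(\nabla F(w))'\|_{L^p(0,1)}\,\frac{p}{2p-1}\,\varepsilon^{1+(p-1)/p}$. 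Summing the leading terms gives $-\crit(w)\,\varepsilon \le -c_0 \crit(w)\Delta$ using $n(w) \le n_{\max}$, while summing the $|R_i|$ together with $\sum_i |w(t_i^+)-w(t_i^-)| = \TV(w) \le n(w)(\max W - \min W)$ and the definition of $\varepsilon$ produces a total error of at most $c_1\Delta^{1+(p-1)/p}$. Combined with $\TV(v)-\TV(w)\le 0$, the bound $\pred(w,\Delta) \ge c_0\crit(w)\Delta - c_1 \Delta^{1+(p-1)/p}$ follows. The hardest part is the feasibility bookkeeping: one must confirm that, regardless of how the signs $\{\sigma_i\}$ are distributed, the hypothesis $\Delta \le \underline{\Delta}_a(w)$ really does keep all shift intervals non-overlapping and that the occasional same-sign merge can be absorbed without disrupting either the $L^1$ accounting or the per-switch expansion; the rest is a direct Hölder estimate.
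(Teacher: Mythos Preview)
Your approach is genuinely different from the paper's, and it has a gap in exactly the spot you flagged as ``the hardest part''. The paper does \emph{not} shift all switches. It selects a single index $i \in \argmax_j |\nabla F(w)(t_j)|$, uses the elementary bound $|\nabla F(w)(t_i)| \ge \crit(w)/\big(n_{\max}|\max W-\min W|\big)$, and then perturbs only at that one switch with $d_h = \chi_{[t_i,t_i+h)}$ for $h=\Delta$ (respectively the symmetric variants). Because only one jump moves, and $\underline{\Delta}_a(w)$ by definition guarantees that no \emph{opposite-sign} jump lies within distance $h$, one immediately gets $\TV(w+d_h)=\TV(w)$ and the rest is the same H\"older estimate you wrote down. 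The factor $c_0$ arises from the $\argmax$ inequality, not from distributing the budget across switches.

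Your construction, by contrast, moves every switch by $\varepsilon$. The difficulty is that $\underline{\Delta}_a(w)$ imposes \emph{no} lower bound on the distance between neighbouring \emph{same-sign} switches: two downward jumps $t_i < t_{i+1}$ can satisfy $t_{i+1}-t_i < \varepsilon$. Your claim that $\Delta \le \underline{\Delta}_a(w)$ ``keeps all shift intervals non-overlapping'' is therefore false. If in addition $\nabla F(w)$ changes sign between $t_i$ and $t_{i+1}$, your rule gives $\sigma_i=+1$, $\sigma_{i+1}=-1$, and the shifted switches cross. At that point the description ``on each $I_i$ the value of $w$ from the opposite side'' no longer determines $v$ unambiguously; if one instead defines $v-w$ as the sum of the indicator perturbations (which is what your linear-part accounting actually uses), then $v$ can take values outside $W$ unless $W$ consists of consecutive integers. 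So the ``same-sign merge can be absorbed'' is not automatic and needs a genuine argument that you have not supplied. The fix the paper uses---move a single switch---sidesteps all of this and yields the same constants with no bookkeeping.
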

\begin{proof}
	Because of the continuous embedding $W^{1,p}(0,1) \hookrightarrow C([0,1])$, $\nabla F(w)$ is continuous.
	Let $g = \nabla F(w)$. We first observe
	\begin{align*}\label{eq:1norm_supnorm_estimate}
	n_{\max} \max_{i} |\nabla F(w)(t_i)||w(t_i^+) - w(t_i^-)| &\ge n(w) \max_{i} |\nabla F(w)(t_i)||w(t_i^+) - w(t_i^-)| \\
	&\ge \crit(w) \ge \max_{i} |\nabla F(w)(t_i)||w(t_i^+) - w(t_i^-)|
	\end{align*}
	and fix $i \in \argmax_j |\nabla F(w)(t_j)|$. We only analyze the situation $\nabla F(w)(t_i) < 0$ with
	$w(t_i^-) >  w(t_i^+)$ in detail here. The other finitely many situations (jump up instead of down or $\nabla F(w)(t_i) > 0$)
	follow with a symmetric argument; see also the considerations in Lemma 4.8
	in \cite{leyffer2022sequential}.
	
	Next, we observe that for all $h \le \underline{\Delta}_a(w)$ with
	\[ \underline{\Delta}_a(w) \coloneqq \min\Big\{ 1, \underbrace{\min\{ t_j\,:\, t_i < t_j \text{ and } w(t_j^+) > w(t_j^-)\}}_{\eqqcolon t_{i\text{ next}}}\Big\} - t_i,
	\]
	the function $w + d_h$, $d_h = \chi_{[t_j, t_j + h)}$, is feasible for \eqref{eq:q} and $\|d_h\|_{L^1} = h$ holds.
	This means that we can shift the downward jump at $t_i$ at $t_i$ to the right until we reach the right boundary
	of the domain $(0,1)$ or the next upward jump.
	
	Using that only downward jumps can occur between $t_i$ and $t_{i} + h$, we deduce $\TV(w) = \TV(w + d_h)$, which implies
	\begin{align}
	\pred(w, h) &\ge - \int_{t_i}^{t_i + h} g(s)\dd s \nonumber\\
	&= -h g(t_i) - \int_0^h \int_0^s g'(t_i + \sigma) \dd \sigma \dd s
	&&\text{$g \in W^{1,p}(0,1)$} \nonumber\\
	&\ge -h g(t_i) - \int_0^h\|g'(t_i + \cdot)\|_{L^p(0,s)}s^\frac{p - 1}{p} \dd s
	&&\text{H\"older's inequality} \nonumber\\
	&\ge - h g(t_i) - h^{\frac{2p - 1}{p}} \frac{p}{2p - 1}  \|g'\|_{L^p(0,1)} \nonumber\\
	&\ge \frac{1}{n_{\max}|\max W - \min W|}\crit(w) h - h h^{\frac{p - 1}{p}} \frac{p}{2p - 1}  \|g'\|_{L^p(0,1)} \label{eq:main_pred_estimate}
	\end{align}
	so that we obtain the claim with the assumed constants.
\end{proof}

\begin{lemma}\label{lem:criticality_positive_case_distinction}
	Let $F : L^1(0,1) \to \R$ be continuously differentiable on $X$,
	that is, $\|h_k\|_{L^1}^{-1}(F(x + h_k) - F(x) - (\nabla F(x),h_k)_{L^2}) \to 0$
	for all $x \in X$, $\|h_k\| \searrow 0$ with $x + h_k \in X$ for all $k \in \N$ and
	$\nabla F : L^1(0,1) \to L^\infty(0,1)$ be Lipschitz continuous on $X$ with Lipschitz constant $\kappa > 0$.
	Let $\{\nabla F(w) \,:\, w \in X \}$ be uniformly bounded in $W^{1,p}(0,1)$ for some $p > 1$.	
	Then there exists $\underline{\Delta}_b > 0$ such that for
	all iterates $w_n$, $w_{n+1}$ produced by \cref{alg:trm} and
	\[ R_n \coloneqq (1 - \sigma)\pred(w_n,\Delta_n)  - |\ared(w_n,w_{n+1}) - \pred(w_n,\Delta_n)|,
	\]
	we obtain
	\begin{align*}
	R_n &\ge (1 - \sigma)\Big(c_0 \crit(w_n) \Delta_n - c_1 \Delta_n^{1 + \frac{p - 1}{p}}  - c_2 \Delta_n^2\Big)
	&&\text{if } \Delta_n \le \underline{\Delta}_a(w_n)\\
	\pred(w_n,\Delta_n) &\ge \delta 
	&&\text{if } \underline{\Delta}_a(w_n) \le \min\{\Delta_n,\underline{\Delta}_b\},\\
	R_n &\ge \delta
	&&\text{if } \underline{\Delta}_a(w_n) \le \Delta_n \le \underline{\Delta}_b
	\end{align*}
	for positive constants $c_0$, $c_1$ that are chosen as in \cref{lem:criticality_implies_pred_bound}, 
	$c_2 \coloneqq \frac{\kappa}{1 - \sigma}$, and $\delta \coloneqq \frac{1 - \sigma}{2}$.
\end{lemma}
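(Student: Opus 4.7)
The overall plan is to view the three cases as driven by a single quantitative comparison of actual versus predicted reduction, made possible by the Lipschitz continuity of $\nabla F$, together with the structural fact that $W \subset \Z$ forces every jump of any $w \in X$ to have height at least one. The first ingredient I would establish is the Taylor-type estimate
\[
 |\ared(w_n,w_{n+1}) - \pred(w_n,\Delta_n)|
 \;=\; \bigl|F(w_{n+1}) - F(w_n) - (\nabla F(w_n), w_{n+1}-w_n)_{L^2}\bigr|
 \;\le\; \tfrac{\kappa}{2}\,\|w_{n+1}-w_n\|_{L^1}^2 \;\le\; \tfrac{\kappa}{2}\,\Delta_n^2,
\]
which follows from writing $F(w_{n+1})-F(w_n) = \int_0^1 (\nabla F(w_n + t(w_{n+1}-w_n)), w_{n+1}-w_n)_{L^2}\,\mathrm{d}t$ and applying the $L^\infty$-$L^1$ Lipschitz estimate $\|\nabla F(u)-\nabla F(v)\|_{L^\infty} \le \kappa\|u-v\|_{L^1}$ together with $\|w_{n+1}-w_n\|_{L^1}\le\Delta_n$ (exact $\TV$-terms in $\ared$ and $\pred$ cancel).

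For Case~1, that is, when $\Delta_n \le \underline{\Delta}_a(w_n)$, \cref{lem:criticality_implies_pred_bound} gives the predicted-reduction bound $\pred(w_n,\Delta_n) \ge c_0\crit(w_n)\Delta_n - c_1 \Delta_n^{1+(p-1)/p}$. Combining with the Taylor estimate above yields
\[
 R_n \;\ge\; (1-\sigma)\bigl(c_0\crit(w_n)\Delta_n - c_1\Delta_n^{1+(p-1)/p}\bigr) \;-\; \tfrac{\kappa}{2}\Delta_n^2,
\]
and since $c_2 = \kappa/(1-\sigma) \ge \kappa/(2(1-\sigma))$, this is precisely the required bound (with a factor-of-two slack).

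For Cases~2 and~3 the key is a constructive feasibility argument that exploits $\underline{\Delta}_a(w_n) \le \Delta_n$. Concretely, by definition of $\underline{\Delta}_a(w_n)$ in \cref{lem:criticality_implies_pred_bound}, one can shift a switch of $w_n$ at the corresponding jump point $t_i$ by exactly $\underline{\Delta}_a(w_n)$ (to the right, say) until it either hits the domain boundary or cancels with the next jump of opposite sign. The resulting test function $\tilde{w}$ satisfies $\|\tilde{w}-w_n\|_{L^1} = \underline{\Delta}_a(w_n) \le \Delta_n$ and is feasible for the trust-region subproblem. Because jump heights of elements of $\BVW(0,1)$ are bounded below by $1$ (since $W\subset\Z$), this shift decreases $\TV$ by at least $1$, so $\TV(w_n) - \TV(\tilde w) \ge 1$. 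Uniform boundedness of $\{\nabla F(w) : w \in X\}$ in $W^{1,p}(0,1) \hookrightarrow C([0,1])$ gives a constant $M>0$ with $\|\nabla F(w_n)\|_{L^\infty}\le M$, so
\[
 \pred(w_n,\Delta_n) \;\ge\; (\nabla F(w_n), w_n - \tilde{w})_{L^2} + \TV(w_n) - \TV(\tilde w) \;\ge\; 1 - M\,\underline{\Delta}_a(w_n) \;\ge\; 1 - M\,\underline{\Delta}_b.
\]
Choosing $\underline{\Delta}_b$ small enough that $1 - M\underline{\Delta}_b \ge \delta/(1-\sigma) = 1/2$ yields Case~2. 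For Case~3, combining this lower bound on $\pred(w_n,\Delta_n)$ with the Taylor estimate gives $R_n \ge (1-\sigma)(1-M\underline{\Delta}_b) - \tfrac{\kappa}{2}\underline{\Delta}_b^2$, and a possibly further reduction of $\underline{\Delta}_b$ makes this at least $\delta = (1-\sigma)/2$. The main obstacle is the combinatorial construction in Cases~2 and~3: one must verify that the shift of a single switch is admissible in the subproblem and correctly track how many jumps are destroyed (one if it hits the boundary, two if it merges with the next opposite-sign jump), but in both situations the $\TV$-decrease is uniformly bounded below by $1$, which is what drives the conclusion.
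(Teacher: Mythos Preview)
Your proposal is correct and follows essentially the same approach as the paper: a Taylor/mean-value estimate for $|\ared-\pred|$ via the Lipschitz gradient, the bound from \cref{lem:criticality_implies_pred_bound} for the first case, and the switch-cancellation construction (shifting a jump by $\underline{\Delta}_a(w_n)$ to annihilate an opposite-sign jump or hit the boundary, yielding $\TV$-decrease $\ge 1$) for the second and third cases. The only cosmetic difference is that you use the integral form of the remainder to obtain $\tfrac{\kappa}{2}\Delta_n^2$, whereas the paper applies the pointwise mean-value theorem and gets $\kappa\Delta_n^2$; both are absorbed by $c_2=\kappa/(1-\sigma)$.
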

\begin{proof}
	Let $\kappa > 0$ denote the Lipschitz constant of $\nabla F : L^1(0,1) \to L^\infty(0,1)$ on $X$.
	For iteration $n \in \N$, we deduce from the mean value theorem that there is $\xi_n = w_n + \tau_n (w_{n+1} - w_n)$
	for some $\tau_n \in [0,1]$ such that
	\begin{align*}
	\ared(w_n, w_{n+1})
	&=\sigma \pred(w_n, \Delta_n) + (1 - \sigma) \pred(w_n, \Delta_n) + (F(\xi_n) - \nabla F(w_n) , w_n - w_{n+1})_{L^2}\\
	&\ge \sigma \pred(w_n, \Delta_n) + \underbrace{(1 - \sigma) \pred(w_n, \Delta_n) - \kappa \|w_n - w_{n+1}\|^2}_{\eqqcolon R_n}.
	\end{align*}
	
	As in the proof of \cref{lem:criticality_implies_pred_bound}, we now fix $i \in \argmax_j |\nabla F(w_n)(t_j)|$ and
	again only analyze the situation $\nabla F(w_n)(t_i) < 0$ with $w_n(t_i^-) > w_n(t_i^+)$ in detail here.
	The other three cases follow with a symmetric argument.
	Analogously to \cref{lem:criticality_implies_pred_bound} and its proof, we deduce
	\begin{align*}
	R_n &\ge (1 - \sigma) 
	\big(c_0\crit(w_n) h - c_1 h h^{\frac{p - 1}{p}} -c_2  \|w_n - w_{n+1}\|_{L^1(0,1)}^2 \big)
	\end{align*}
	for $h \le \underline{\Delta}_a(w_n)$ and $h \le \Delta_n$.
	
	If $\Delta_n \le \underline{\Delta}_a(w_n)$, the choice $h = \Delta_n$  gives
	\begin{gather}\label{eq:case_two_positivity}
	R_n \ge c_0 \crit(w_n) \Delta_n - c_1 \Delta_n \Delta_n^{\frac{p - 1}{p}} - c_2\Delta_n^2.
	\end{gather}
	%
	
	If $\underline{\Delta}_a(w_n) \le \Delta_n$ holds,
	then the choice $d_n = \chi_{[t_j, t_j + \underline{\Delta}_a(w_n))}$ implies that $w_n + d_n$
	is feasible for $\text{{\ref{eq:tr}}}(w_n, \nabla F(w_n), \Delta_n)$ and gives
	\begin{align*}
	R_n \ge (1 - \sigma)
	\Big(1 - G \underline{\Delta}_a(w_n) - \frac{L}{1 - \sigma}\underline{\Delta}_a(w_n)^2\Big)
	\ge (1- \sigma)\Big(1 - G \Delta_n - \frac{\kappa}{1 - \sigma}\Delta_n^2\Big) 
	\end{align*}
	because $\TV(w_n) \ge \TV(w_n + d_n) + 1$ holds. Consequently, there exists $\underline{\Delta}_0 > 0$ such that if
	\[ \underline{\Delta}_a(w_n) \le \Delta_{n} \le \underline{\Delta}_0, \]
	we have $R_n \ge \frac{1 - \sigma}{2}$. 
	
	Similarly, we obtain for $\underline{\Delta}_a(w_n) \le \min\{\Delta_n,\underline{\Delta}_0\}$ that
	\begin{align*}
	\pred(w_n,\Delta_n) \ge \pred(w_n,\underline{\Delta}_a(w_n))
	\ge 1 - G \underline{\Delta}_a(w_n) \ge \frac{1}{2} \ge \frac{1 - \sigma}{2},
	\end{align*}
	where the first inequality is due to the monotonicity of $\pred(w_n,\cdot)$.
	
	The claim follows with $\underline{\Delta}_b \coloneqq \underline{\Delta}_0$.
\end{proof}

\begin{lemma}\label{lem:criticality_lipschitz}
Let $F : L^1(0,1) \to \R$ be continuously differentiable on $X$
and $\nabla F : L^1(0,1) \to L^\infty(0,1)$ be Lipschitz continuous on $X$.
Let $\{\nabla F(w) \,:\, w \in X \}$ be uniformly bounded in $W^{1,\infty}(0,1)$.
Let $\delta$ be as in \cref{lem:criticality_positive_case_distinction}.
There exist $\underline{\Delta}_c > 0$ and $L > 0$ such that $d(x_n,x_{n+1}) \le \Delta_n \le \underline{\Delta}_c$
for two subsequent iterates produced by \cref{alg:trm} implies
\[ \pred(w_n,\Delta_{n+1}) \ge \delta \quad\text{or}\quad |\crit(w_n) - \crit(w_{n+1})| \le L \|w_n - w_{n+1}\|_{L^1}. \]
\end{lemma}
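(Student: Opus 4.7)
The argument is structural and compares the switch sets of $w_n$ and $w_{n+1}$. For unsuccessful iterations the claim is trivial since $w_{n+1} = w_n$, so I restrict to successful iterations, where $w_{n+1}$ solves the trust-region subproblem at $w_n$ and in particular $\|w_n - w_{n+1}\|_{L^1} \le \Delta_n \le \underline{\Delta}_c$. First, I would introduce a greedy matching between switches of $w_n$ and $w_{n+1}$ by jump direction and value, pairing each switch of $w_n$ with its closest compatible switch in $w_{n+1}$ when one exists within a prescribed tolerance.

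For matched pairs $(s,t)$ representing a common jump $a \to b$, the difference of the contributions to $\crit(w_n)$ and $\crit(w_{n+1})$ is bounded by $|b-a|(G|s-t| + \kappa\|w_n-w_{n+1}\|_{L^1})$, where $G$ comes from the pointwise Lipschitz constant of $\nabla F(w)$ on $[0,1]$ (uniform $W^{1,\infty}$-bound) and $\kappa$ from the Lipschitz continuity of $\nabla F:L^1(0,1)\to L^\infty(0,1)$. Since each shifted matched pair contributes at least $|b-a||s-t|$ to $\|w_n-w_{n+1}\|_{L^1}$ and the number of switches of each iterate is at most $n_{\max}$ by \cref{lem:number_of_switches_for_large_n}, summing gives a bound of the form $L_1\|w_n-w_{n+1}\|_{L^1}$ on the matched contribution.

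For unmatched switches---\emph{removed} switches of $w_n$ or \emph{created} switches of $w_{n+1}$---I would introduce a threshold $\tau>0$. If any unmatched switch has $|\nabla F| \ge \tau$ at its location, the variational bump $(b-a)\chi_{[s^*,s^*+h)}$ from the proofs of \cref{lem:criticality_implies_pred_bound,lem:criticality_positive_case_distinction} yields a feasible point in $\text{\ref{eq:tr}}(w_n,\nabla F(w_n),\Delta_{n+1})$ whose model value produces $\pred(w_n,\Delta_{n+1}) \ge \delta$: a removed switch contributes a $\TV$-gain of at least one (with a small linear remainder), and a created switch lets the linear term $-\int \nabla F(w_n)(b-a)\,\mathrm{d}t$ dominate, once $\underline{\Delta}_c$ is shrunk relative to $\tau$, $G$, and $\kappa$ (the latter needed to transfer the threshold from $\nabla F(w_{n+1})$ to $\nabla F(w_n)$). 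If instead every unmatched switch has $|\nabla F|<\tau$, each contributes at most $(\max W-\min W)\tau$ to the respective criticality; the integer structure of $\BVW(0,1)$ forces a corresponding $L^1$-footprint for every unmatched switch, and the bound $n_{\max}$ yields a term $L_2\|w_n-w_{n+1}\|_{L^1}$, giving the Lipschitz alternative with $L=L_1+L_2$.

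The main obstacle is the combinatorics of the matching, in particular ensuring that unmatched switches admit both interpretations in parallel: large $|\nabla F|$ triggers the trust-region alternative via the variational template from \cref{lem:criticality_implies_pred_bound,lem:criticality_positive_case_distinction}, while small $|\nabla F|$ yields a criticality error bounded by the $L^1$-mass that the integer-valued constraint forces on each unmatched switch. Balancing the threshold $\tau$, the Lipschitz constant $L$, and the radius $\underline{\Delta}_c$ simultaneously so that both branches give the stated bound is the delicate part.
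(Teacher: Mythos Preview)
Your threshold-based scheme diverges from the paper's argument and, as written, has a genuine gap. The paper exploits a much simpler dichotomy: because $W\subset\Z$, the quantity $\TV(w)$ is integer-valued on $X$. Since $w_{n+1}$ solves \eqref{eq:tr}, one has $\pred(w_n,\Delta_n)=\TV(w_n)-\TV(w_{n+1})+(\nabla F(w_n),w_n-w_{n+1})_{L^2}\ge 0$, and the linear part is at most $G\Delta_n$. Choosing $\underline{\Delta}_c\le\tfrac{1}{2}G^{-1}$ therefore rules out $\TV(w_{n+1})>\TV(w_n)$, and if $\TV(w_{n+1})<\TV(w_n)$ the integer gap gives $\pred(w_n,\Delta_n)\ge\tfrac{1}{2}\ge\delta$ outright. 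Only in the remaining case $\TV(w_n)=\TV(w_{n+1})$ does the paper match switches, and there---after clustering same-sign jumps and repeating each switch according to its jump height---optimality of $w_{n+1}$ forces the cluster patterns to coincide, so a \emph{perfect} bipartite matching exists with $|t_i-t_j|\le\|w_n-w_{n+1}\|_{L^1}$ on matched pairs. No threshold $\tau$ appears because there are no unmatched switches at all.

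The gap in your approach is the claim that ``the integer structure of $\BVW(0,1)$ forces a corresponding $L^1$-footprint for every unmatched switch.'' Without first pinning down $\TV$, this is false: for $w_n\equiv 0$ and $w_{n+1}=\chi_{[1/2,1/2+\varepsilon)}$ one has two unmatched switches in $w_{n+1}$ yet $\|w_n-w_{n+1}\|_{L^1}=\varepsilon$, so $n_{\max}(\max W-\min W)\tau\le L_2\|w_n-w_{n+1}\|_{L^1}$ cannot hold for fixed $\tau,L_2>0$. This configuration is of course excluded because it would make $\pred<0$---but that is exactly the paper's $\TV$-comparison step, and once you use it the whole threshold apparatus becomes unnecessary. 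Your ``created switch with large $|\nabla F|$'' branch has the same defect: the bump $(b-a)\chi_{[s^*,s^*+h)}$ adds \emph{two} switches to $w_n$, so its model value is $-2|b-a|+O(Gh)$, which is negative, not $\ge\delta$. The mechanism that actually excludes created switches is global (net jump mass cannot increase when $\Delta_n$ is small), and that is precisely what the paper's $\TV$ dichotomy captures in one line.
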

\begin{proof}
Let $G \coloneqq  \sup_{w \in X} \|\nabla F(w)\|_{L^\infty(0,1)}$. Then we observe
\begin{align*}
\pred(w_n,\Delta_n)
&= \TV(w_n) - \TV(w_{n+1}) + (\nabla F(w_n), w_n - w_{n+1})_{L^2} \\
&\left\{
\begin{aligned}
\le \TV(w_n) - \TV(w_{n+1}) + G \Delta_n,\\
\ge \TV(w_n) - \TV(w_{n+1}) - G \Delta_n.
\end{aligned}
\right.
\end{align*}
Consequently, a choice $\underline{\Delta}_c \le 0.5 G^{-1}$ implies that the predicted reduction
would be negative if $\TV(w_{n+1}) > \TV(w_n)$ holds, which is not possible.
Therefore, $\TV(w_n) \ge \TV(w_{n+1})$ has to hold. If $\TV(w_n) > \TV(w_{n+1})$, then we obtain 
\[ \pred(w_n,\Delta_n) \ge 0.5 \ge \delta \]
if $\underline{\Delta}_c \coloneqq 0.5 G^{-1}$ because $\TV(w_n) - \TV(w_{n+1}) \in \Z$.

It remains to analyze the case $\TV(w_n) = \TV(w_{n+1})$ under the condition that
$d(w_n,w_{n+1}) \le \Delta_n \le 0.5 G^{-1}$. In this case, we need to overestimate the term
\begin{multline*}
|\crit(w_n) - \crit(w_{n+1})|
=\\ \bigg|\sum_{i=1}^{n(w_n)} |\nabla F(w_n)(t_i)||{w_n}(t^+_i) - {w_n}(t^-_i)| 
 - \sum_{j=1}^{n(w_{n+1})} |\nabla F(w_{n+1})(t_j)||{w_{n+1}}(t^+_j) - {w_{n+1}}(t^-_j)|\bigg|
\end{multline*}
We now infer that there is a relation between the jumps at $t_i$ with jump heights ${w_n}(t^+_i) - {w_n}(t^-_i)$
of $w_n$ and the jumps at $t_j$ with jump heights ${w_{n+1}}(t^+_j) - {w_{n+1}}(t^-_j)$ of $w_{n+1}$.

First we always cluster sequences of jumps whose jump heights have the same sign together along the
interval $(0,1)$ so that we obtain an alternating pattern of positive and negative jump heights for both $w_n$ and $w_{n+1}$.
The order (positive/negative) of these two patterns have to coincide as well as the sum of the jump heights inside corresponding
clusters. Otherwise, we can again show that $w_{n+1}$ was not optimal for the trust-region subproblem due to
$\|w_n - w_{n+1}\|_{L^1} \le \Delta_n\le 0.5 G^{-1}$ with a similar argument as above.

Next, we map the switching points to each other inside corresponding clusters in the following way.
We repeat each switching point $t_i$ of $w_n$ inside a cluster $|{w_n}(t^+_i) - {w_n}(t^-_i)|$ times
and similar for each switching point of $t_j$ of $w_{n+1}$.
Now, we can compute a one-to-one assignment (bipartite perfect matching) of these switches between the (increased) corresponding
clusters such that
\[ \|w_n - w_{n+1}\|_{L^1([\min \{t_i,t_j\},\max \{t_i,t_j\}))} \ge |t_i - t_j| \]
holds for matched switches $t_i$ and $t_j$ by construction of the switches from $w_n$ and $w_{n+1}$.

As a consequence, we obtain
\begin{multline*}
\bigg|\sum_{i=1}^{n(w_n)} |\nabla F(w_n)(t_i)||{w_n}(t^+_i) - {w_n}(t^-_i)| - \sum_{j=1}^{n(w_{n+1})} |\nabla F(w_{n+1})(t_j)||{w_{n+1}}(t^+_j) - {w_{n+1}}(t^-_j)|\bigg|
\\
\le L \|w_n - w_{n+1}\|_{L^1(0,1)}
\end{multline*}
with the choice $L \coloneqq (G  + \sup\{\|\nabla F(w)\|_{W^{1,\infty}(0,1)} : w \in X\}) n_{\max}|\max W - \min W|$, which proves the claim.
\end{proof}

\section{Computational experiments}\label{sec:numerics}
We describe our computational experiments in \cref{sec:description} and provide the results in \cref{sec:results}

\subsection{Experiment description}\label{sec:description}
In order to assess the effect of the different update strategies for the trust-region radius on the runtime performance,
we use two instances of \eqref{eq:q} as benchmark problems, where we scale the $\TV$-term in the objective by different 
scalars $\alpha > 0$ since the scaling has a strong influence on the numbers of required iterations in practice and thus 
the overall runtime of the algorithm. There is no theoretical change because this scaling is equivalent to scaling $F$ by $\nicefrac{1}{\alpha}$.
In particular, the problems are generally computationally less expensive for relatively small and 
relatively large values of $\alpha$ and have a runtime peak for intermediate values of $\alpha$,
see, e.g., \cite{baraldi2024domain}.
Then we execute \cref{alg:trm} as well as Alg.\ 1 from \cite{leyffer2022sequential}, where we note that we have implemented
them identically so that the only difference is the different behavior of the trust-region update on acceptance of a step
(doubling in \cref{alg:trm} and reset to some finite $\Delta_{\max}$ in Alg.\ 1 from \cite{leyffer2022sequential}).
For the subproblem solver, we use our most recent implementation of the topological sorting-based approach described
in \cite{severitt2023efficient}. 

In line with the naming in \cite{leyffer2022sequential,manns2023on},
we denote \cref{alg:trm} (without trust-region radius reset)
by SLIP-NR and Alg.\ 1 from \cite{leyffer2022sequential} (with
trust-region reset) by SLIP-RT.

The first benchmark problem is taken from \cite{severitt2023efficient} and is an integer optimal control problem that
is governed by a steady heat equation on a one-dimensional domain (an interval)
with $W = \{-2,\ldots,23\}$. It is described in detail in Section 5.1 in
\cite{severitt2023efficient} and we have discretized the PDE, its adjoint, and the objective using \texttt{DOLFINx} 0.9.0 \cite{barrata2023dolfinx}.
Specifically, we have discretized the input variable using a piecewise constant ansatz and the state variable using a continuous Lagrange
order 1 ansatz on a uniform grid of $N = 4096$ intervals that discretize the computational domain.

The second benchmark problem is a one-dimensional signal reconstruction problem
with $W = \{-2,\ldots,2\}$ that is described in Section 5 in \cite{leyffer2022sequential}.
As in \cite{leyffer2022sequential}, we use a piecewise constant ansatz for the input variable and a Legendre--Gauss quadrature of order 5 per interval
for the discretization of the convolution operator that occurs therein. Slightly different to the setting in \cite{leyffer2022sequential},
we choose $f(t) = 0.2 \cos(2(t - 1)\pi - 0.25)\exp(t - 1)$ for $t \in (-1,1)$. Again, we discretize the domain uniformly into $N = 4096$ intervals.

We execute algorithm variants for both problems for the scalings $\alpha \in \{10^{-6}, 5\times 10^{-6}, 10^{-5}, 5\times 10^{-5}, 10^{-4}, 5\times 10^{-4}, 10^{-3}\}$. The algorithm stops when no progress can be made on the current
discretization as is indicated by a contraction of the trust-region radius
below the mesh size. In all experiments, the algorithms were initialized with the 
constant zero function. All experiments are carried out on a laptop computer with
an Intel(R) Core i7(TM) CPU with eight cores that is clocked at 2.5 GHz and has
64 GB RAM.

\subsection{Results}\label{sec:results}
SLIP-NR is significantly faster than SLIP-RT on all instances. The relative
speed-up is often (including on the slowest
instances per benchmark class) higher than 60\,\%.
Compared to this, the achieved resulting objective values are most often
comparable with objective value differences less than 5\,\%. In most cases,
and in particular if the difference is significant, SLIP-RT achieves a better
final objective value.

For the steady heat equation benchmark problem, for the two computationally least 
expensive instances, SLIP-RT achieves objective values
that are 6.1\,\% and 74\,\% better than SLIP-NR. Detailed results are 
provided in \cref{tab:exp_pde}. To give a visual impression, we 
show the resulting controls for $\alpha = 5\times 10^{-6}$ in \cref{fig:ctrl_visualization}. For the signal reconstruction benchmark,
for the two computationally most expensive instances
of SLIP-RT, SLIP-RT achieves objective values
that are 8.9\,\% and 8.0\,\% better than SLIP-NR.
Detailed results are provided in \cref{tab:exp_conv}.

\begin{table}[ht]
	\caption{Runtimes in seconds for SLIP-RT ($t_{RT}$) and
	SLIP-NR ($t_{NR}$), relative runtime improvement of SLIP-NR, and
	resulting objective values ($J(x_{RT})$ and $J(x_{NR})$)
	for the steady heat equation benchmark.
	Significantly smaller ($>\,5\%$) runtime and objective values
	are highlighted bold-faced.}\label{tab:exp_pde}	
	\centering
	\begin{adjustbox}{width=\textwidth}	    
		\begin{tabular}{r|lllllllll}
			\toprule
			$\alpha\cdot 10^{-6}$ & $t_{RT}$ & $t_{NR}$ & $\frac{t_{RT} - t_{NR}}{t_{RT}}$ & $J(x_{RT})$ & $J(x_{NR})$ \\
			\midrule
			$1$             & $9.054\times 10^2$ & $\mathbf{2.955\times 10^2}$ & $67.4\;\%$ & $9.588\times 10^{-2}$ & $9.610\times 10^{-2}$ \\
			$5$             & $7.830\times 10^2$ & $\mathbf{2.938\times 10^2}$ & $66.3\;\%$ & $9.638\times 10^{-2}$ & $9.629\times 10^{-2}$ \\
			$1 \times 10^1$ & $1.037\times 10^3$ & $\mathbf{3.108\times 10^2}$ & $70.0\;\%$ & $9.637\times 10^{-2}$ & $9.653\times 10^{-2}$ \\
			$5 \times 10^1$ & $1.010\times 10^3$ & $\mathbf{3.055\times 10^2}$ & $69.8\;\%$ & $9.842\times 10^{-2}$ & $9.881\times 10^{-2}$ \\
			$1 \times 10^2$ & $7.104\times 10^2$ & $\mathbf{2.993\times 10^2}$ & $57.9\;\%$ & $1.010\times 10^{-1}$ & $1.016\times 10^{-1}$ \\
			$5 \times 10^2$ & $3.101\times 10^2$ & $\mathbf{1.761\times 10^2}$ & $43.2\;\%$ & $\mathbf{1.498\times 10^{-1}}$ & $1.589\times 10^{-1}$ \\
			$1 \times 10^3$ & $2.993\times 10^2$ & $\mathbf{5.204\times 10^1}$ & $82.6\;\%$ & $\mathbf{2.142\times 10^{-1}}$ & $3.727\times 10^{-1}$ \\
			\bottomrule
		\end{tabular}
	\end{adjustbox}
\end{table}

\begin{table}[ht]
	\caption{Runtimes in seconds for SLIP-RT ($t_{RT}$) and
		SLIP-NR ($t_{NR}$), relative runtime improvement
		of SLIP-NR, and resulting objective values
		($J(x_{RT})$ and $J(x_{NR})$)
		for the signal reconstruction benchmark.
		Significantly smaller ($>\,5\%$) runtime and objective values
		are highlighted bold-faced.}\label{tab:exp_conv}	
	\centering
	\begin{adjustbox}{width=\textwidth}	    
		\begin{tabular}{rlllllllll}
			\toprule
			$\alpha\cdot 10^{-6}$ & $t_{RT}$ & $t_{NR}$ & $\frac{t_{RT} - t_{NR}}{t_{RT}}$ & $J(x_{RT})$ & $J(x_{NR})$ \\
			\midrule
			$1$             & $5.380\times 10^1$ & $\mathbf{2.120\times 10^1}$ & $60.6\;\%$ & $2.129\times 10^{-4}$ & $2.170\times 10^{-4}$ \\
		    $5$             & $5.698\times 10^1$ & $\mathbf{2.234\times 10^1}$ & $60.8\;\%$ & $\mathbf{4.022\times 10^{-4}}$ & $4.378\times 10^{-4}$ \\			
			$1 \times 10^1$ & $5.404\times 10^1$ & $\mathbf{2.263\times 10^1}$ & $58.1\;\%$ & $\mathbf{5.850\times 10^{-4}}$ & $6.308\times 10^{-4}$ \\
			$5 \times 10^1$ & $4.563\times 10^1$ & $\mathbf{3.342\times 10^1}$ & $26.8\;\%$ & $1.541\times 10^{-3}$ & $1.521\times 10^{-3}$ \\
			$1 \times 10^2$ & $4.488\times 10^1$ & $\mathbf{1.392\times 10^1}$ & $69.0\;\%$ & $2.445\times 10^{-3}$ & $2.448\times 10^{-3}$ \\
			$5 \times 10^2$ & $1.315\times 10^1$ & $\mathbf{7.846}$            & $40.3\;\%$ & $6.074\times 10^{-3}$ & $6.074\times 10^{-3}$ \\
			$1 \times 10^3$ & $4.410$            & $\mathbf{2.470}$            & $44.0\;\%$ & $9.787\times 10^{-3}$ & $9.787\times 10^{-3}$ \\					
			\bottomrule
		\end{tabular}
	\end{adjustbox}
\end{table}

\begin{figure}[t]
	\vspace{-1cm}
	\centering
	\begin{subfigure}[b]{0.49\textwidth}
	\centering
	\includegraphics[width=.8\textwidth]{./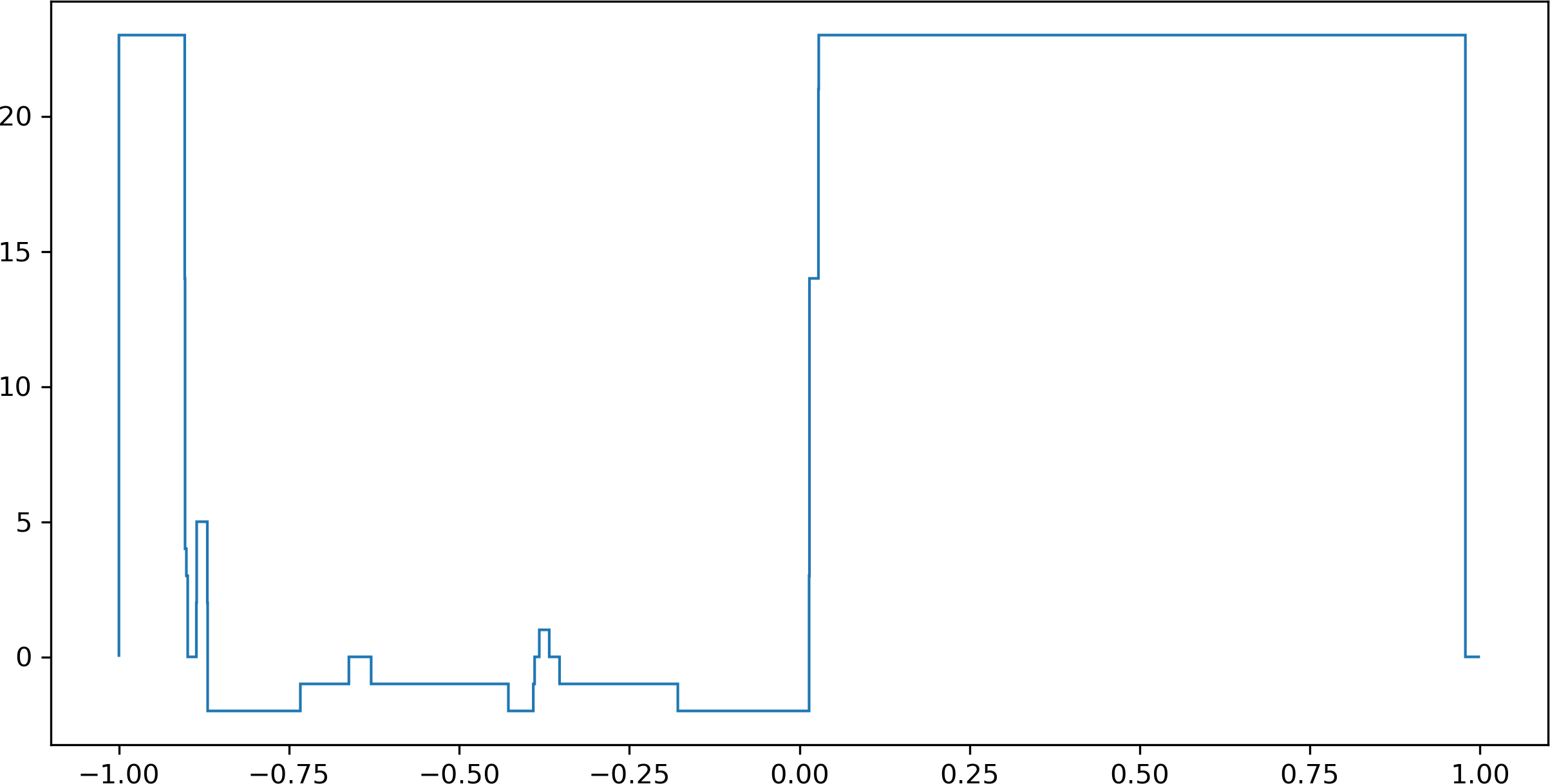}
	\caption{{\small SLIP-NR, $\alpha = 1\times 10^{-6}$}}
	\end{subfigure}
	\hfill
	\begin{subfigure}[b]{0.49\textwidth}  
	\centering 
	\includegraphics[width=.8\textwidth]{./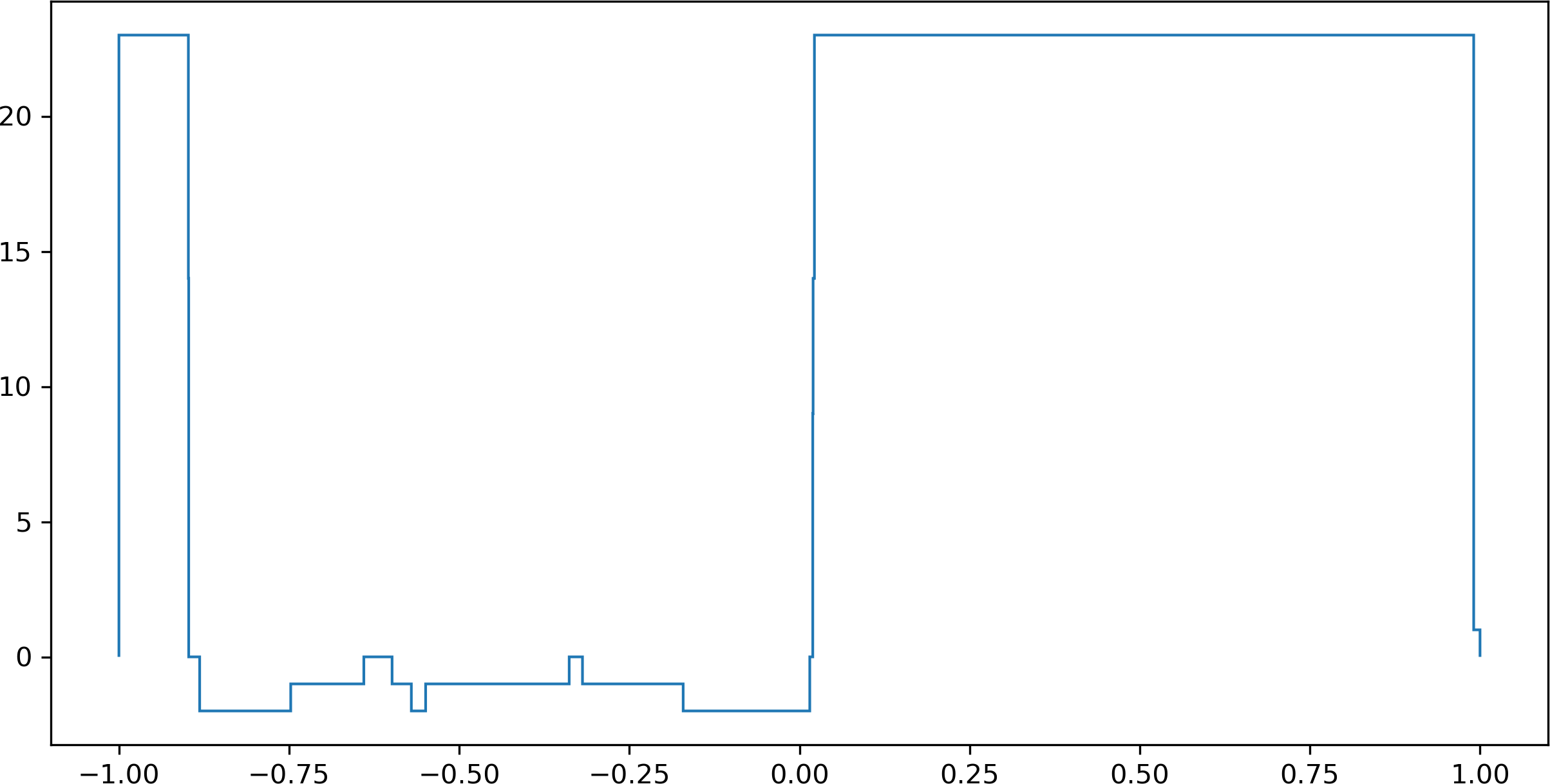}
	\caption{{\small SLIP-RT, $\alpha = 1\times 10^{-6}$}}
	\end{subfigure}
	\vspace{2mm}
	
	\begin{subfigure}[b]{0.49\textwidth}
	\centering
	\includegraphics[width=.8\textwidth]{./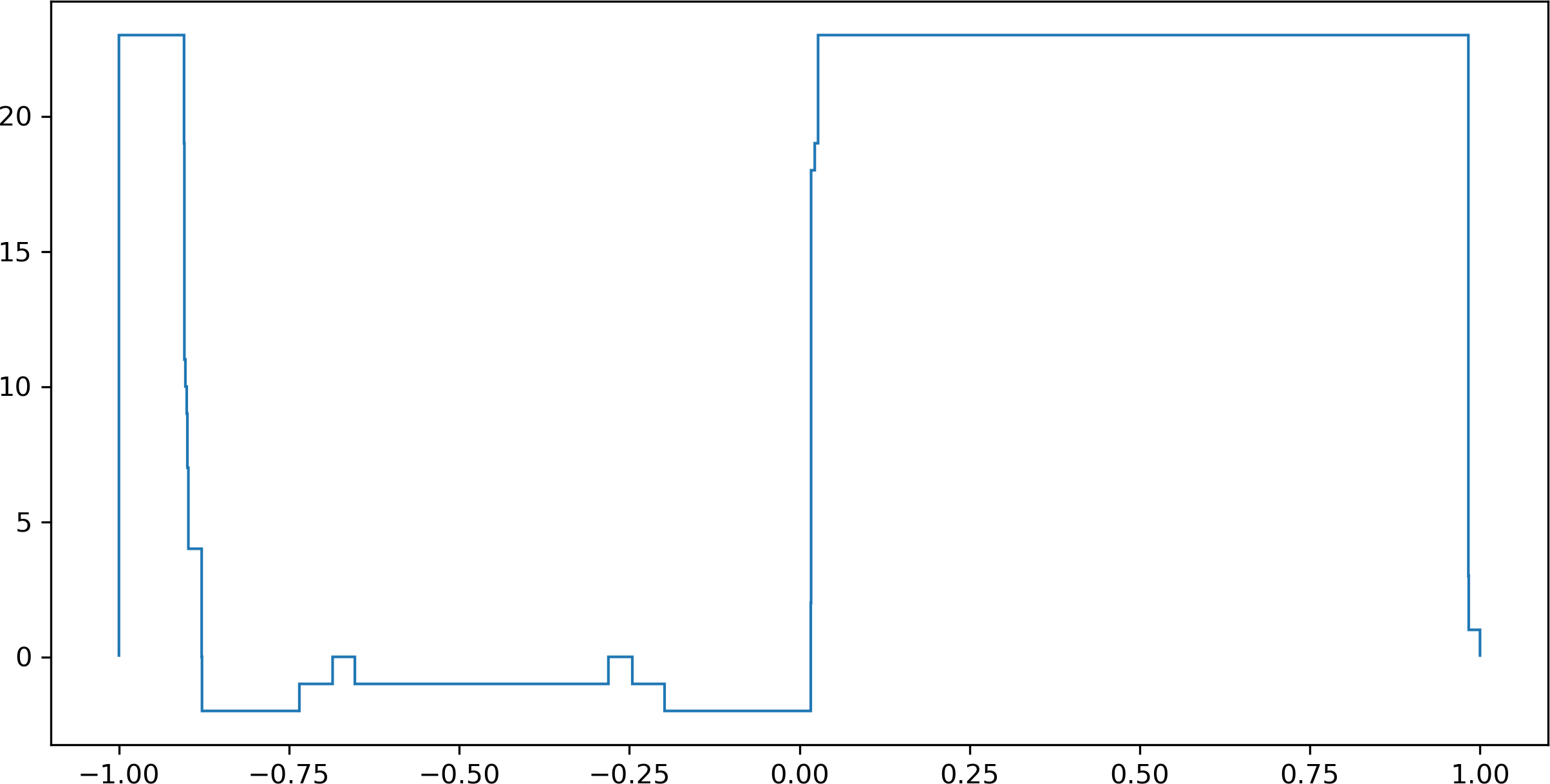}
	\caption*{{\scriptsize SLIP-NR, $\alpha = 5\times 10^{-6}$}}
	\end{subfigure}
	\hfill
	\begin{subfigure}[b]{0.49\textwidth}  
	\centering 
	\includegraphics[width=.8\textwidth]{./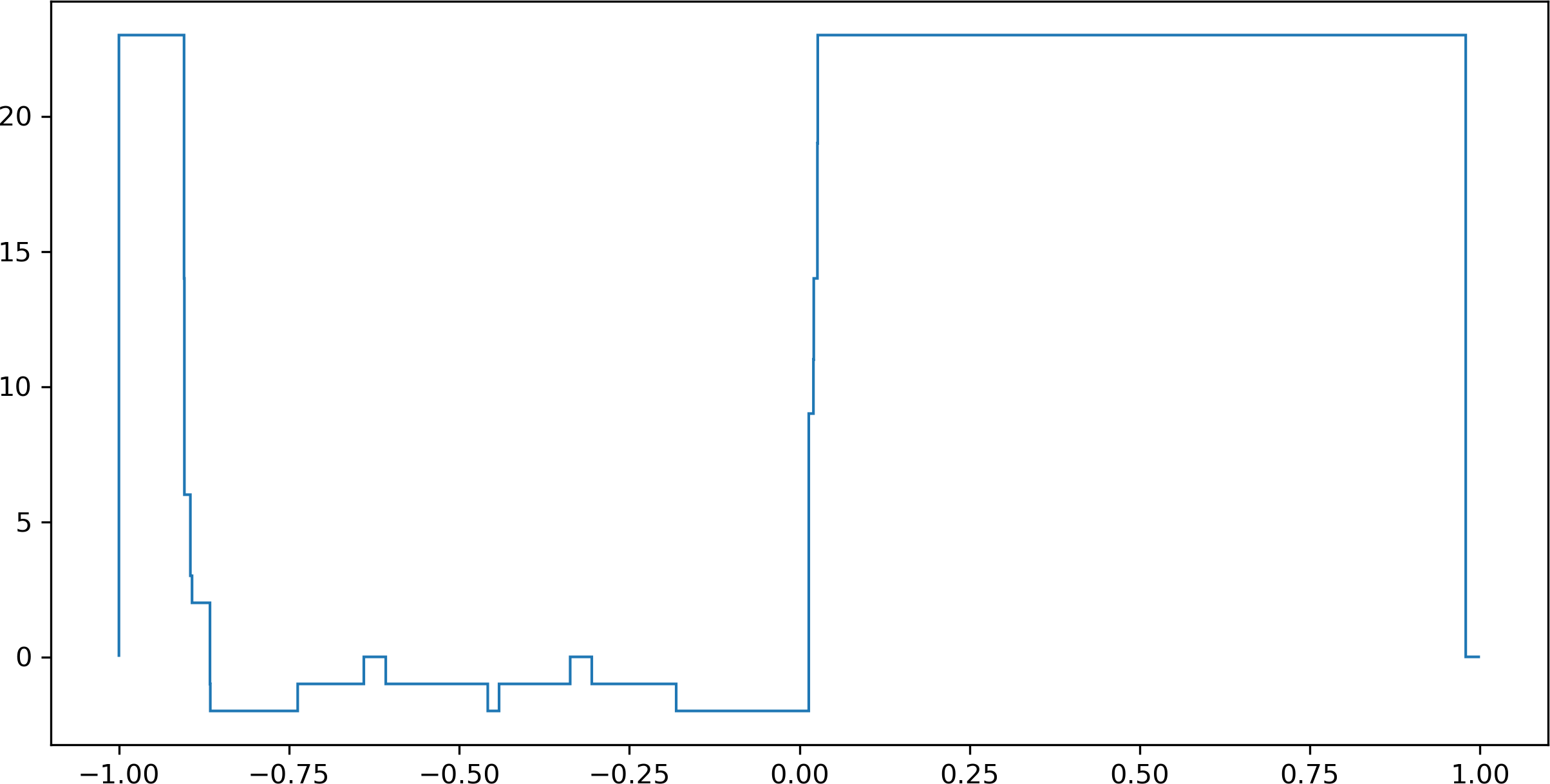}
	\caption*{{\scriptsize SLIP-RT, $\alpha = 5\times 10^{-6}$}}
	\end{subfigure}
	\vspace{2mm}

	\begin{subfigure}[b]{0.49\textwidth}
	\centering
	\includegraphics[width=.8\textwidth]{./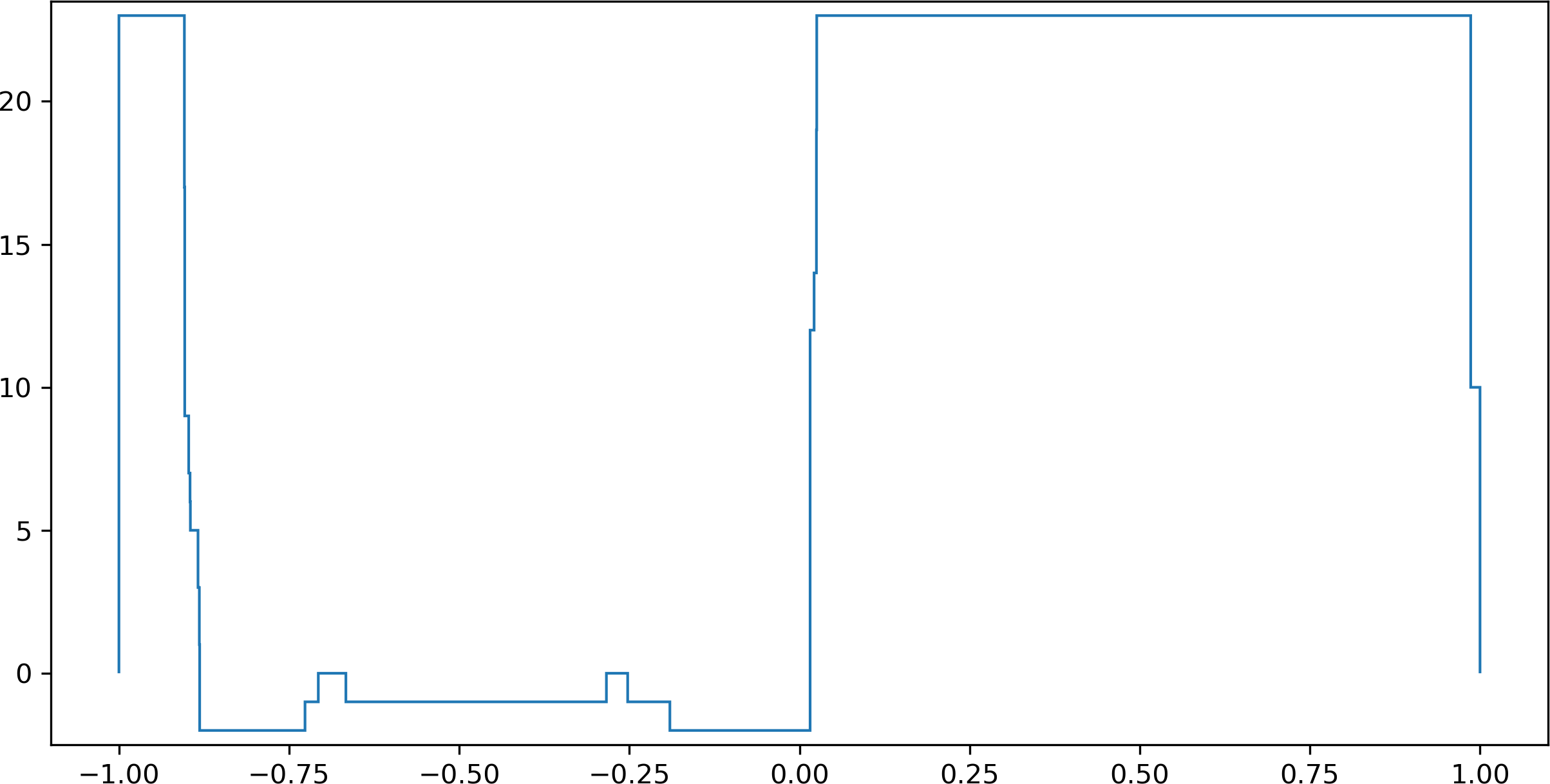}
	\caption*{{\scriptsize SLIP-NR, $\alpha = 1\times 10^{-5}$}}
	\end{subfigure}
	\hfill
	\begin{subfigure}[b]{0.49\textwidth}  
	\centering 
	\includegraphics[width=.8\textwidth]{./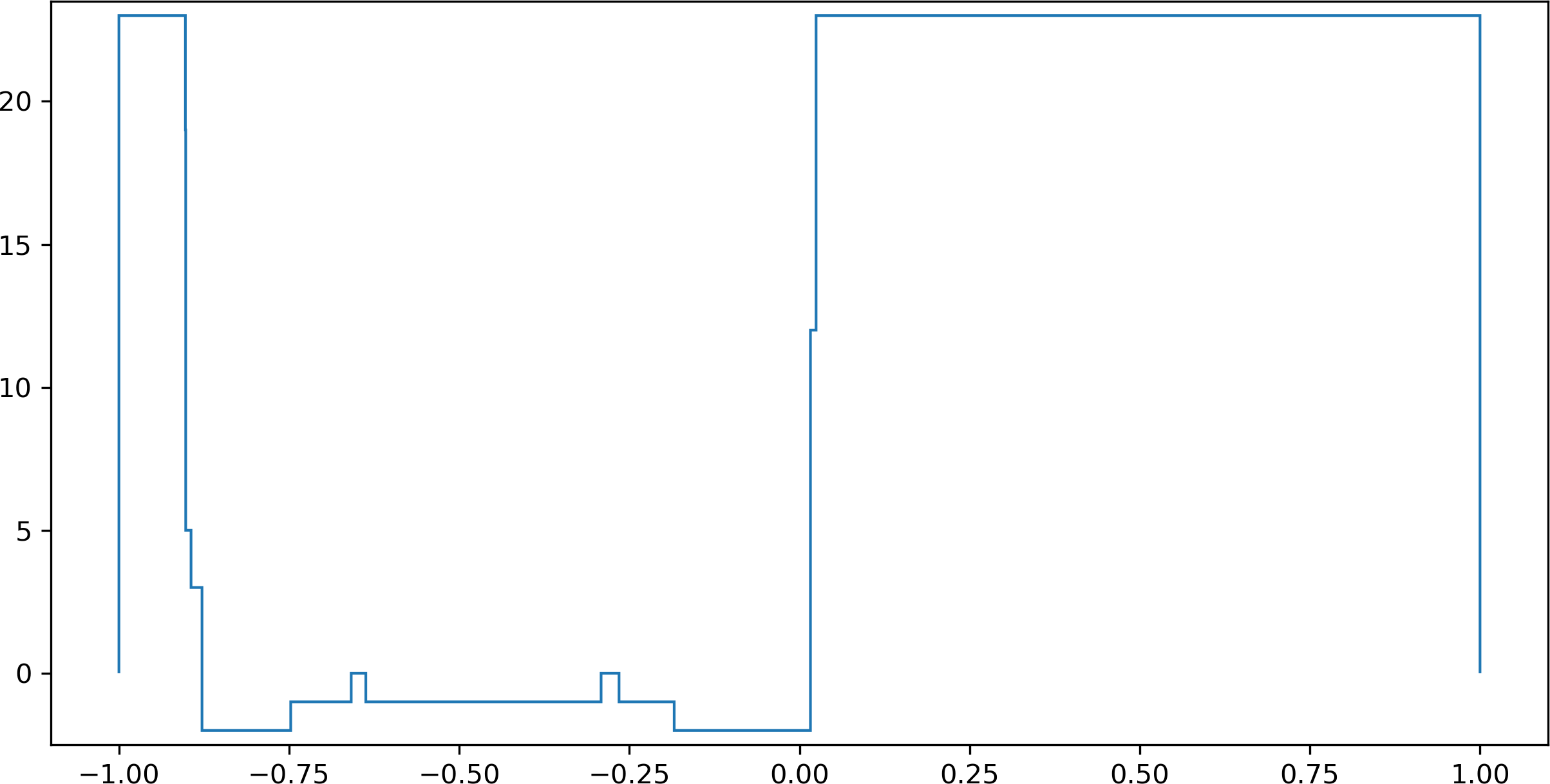}
	\caption*{{\scriptsize SLIP-RT, $\alpha = 1\times 10^{-5}$}}
	\end{subfigure}
	\vspace{2mm}

	\begin{subfigure}[b]{0.49\textwidth}
	\centering
	\includegraphics[width=.8\textwidth]{./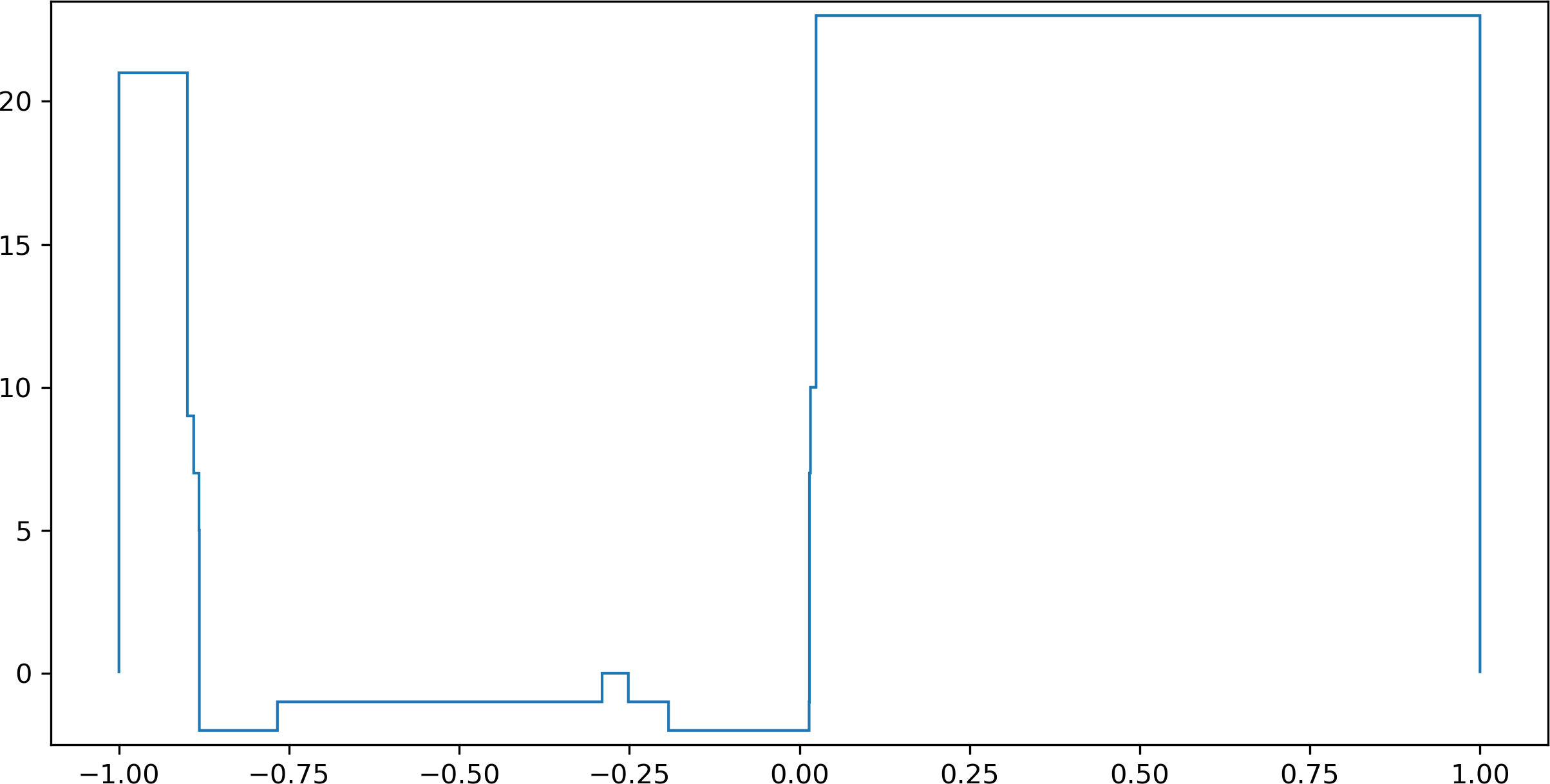}
	\caption*{{\scriptsize SLIP-NR, $\alpha = 5\times 10^{-5}$}}
	\end{subfigure}
	\hfill
	\begin{subfigure}[b]{0.49\textwidth}  
	\centering 
	\includegraphics[width=.8\textwidth]{./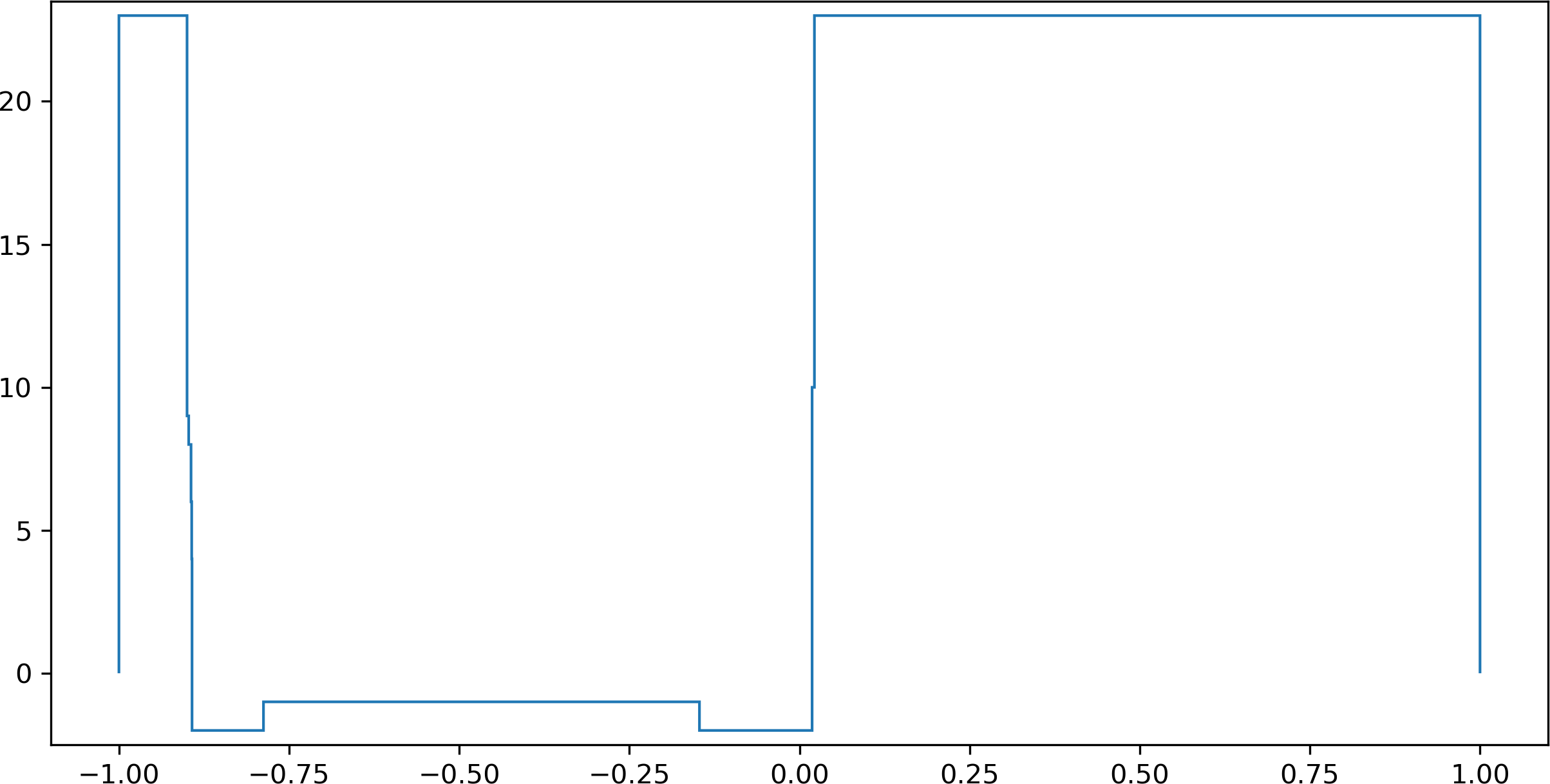}
	\caption*{{\scriptsize SLIP-RT, $\alpha = 5\times 10^{-5}$}}
	\end{subfigure}
	\vspace{2mm}

	\begin{subfigure}[b]{0.49\textwidth}
	\centering
	\includegraphics[width=.8\textwidth]{./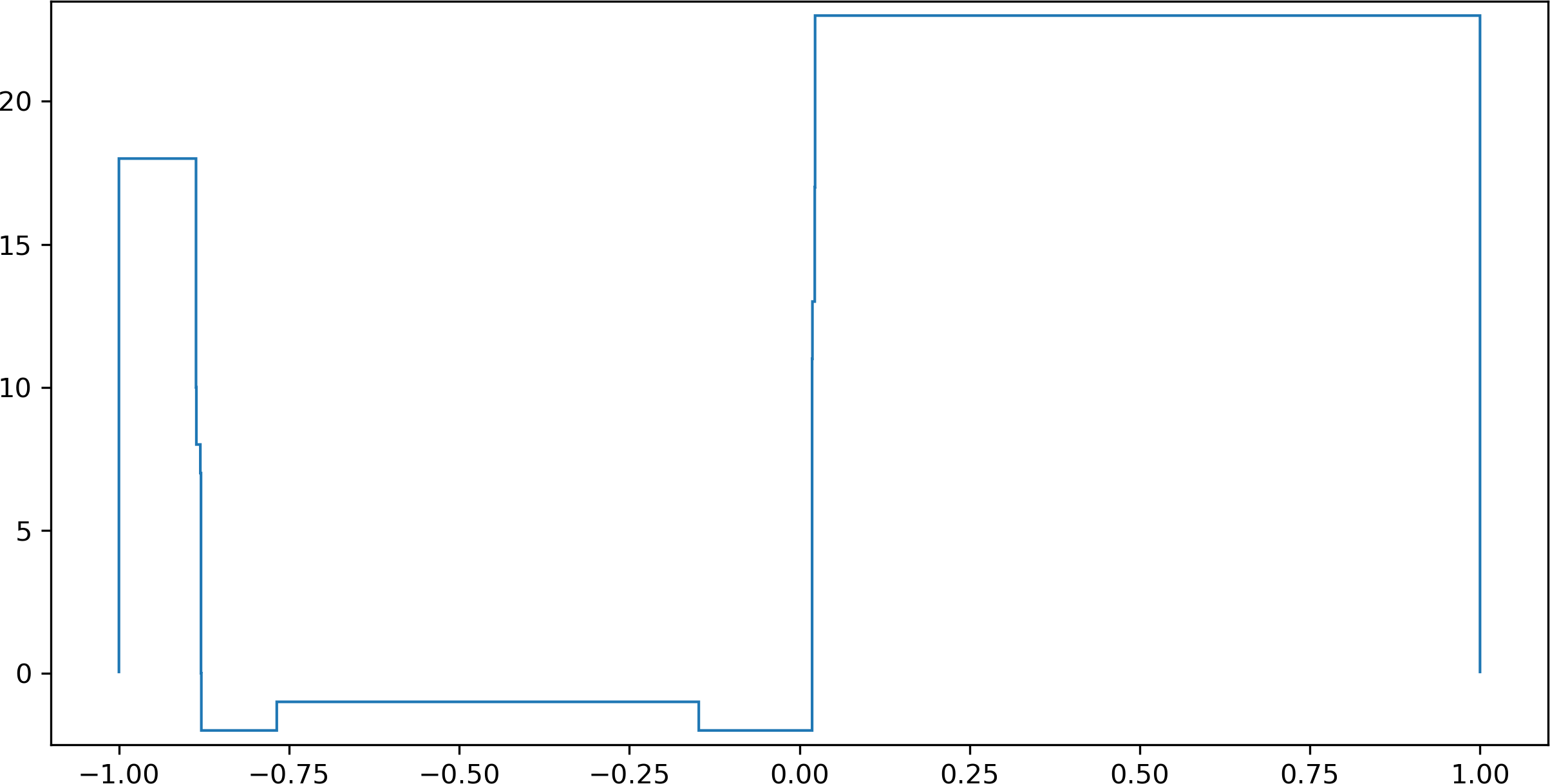}
	\caption*{{\scriptsize SLIP-NR, $\alpha = 1\times 10^{-4}$}}
	\end{subfigure}
	\hfill
	\begin{subfigure}[b]{0.49\textwidth}  
	\centering 
	\includegraphics[width=.8\textwidth]{./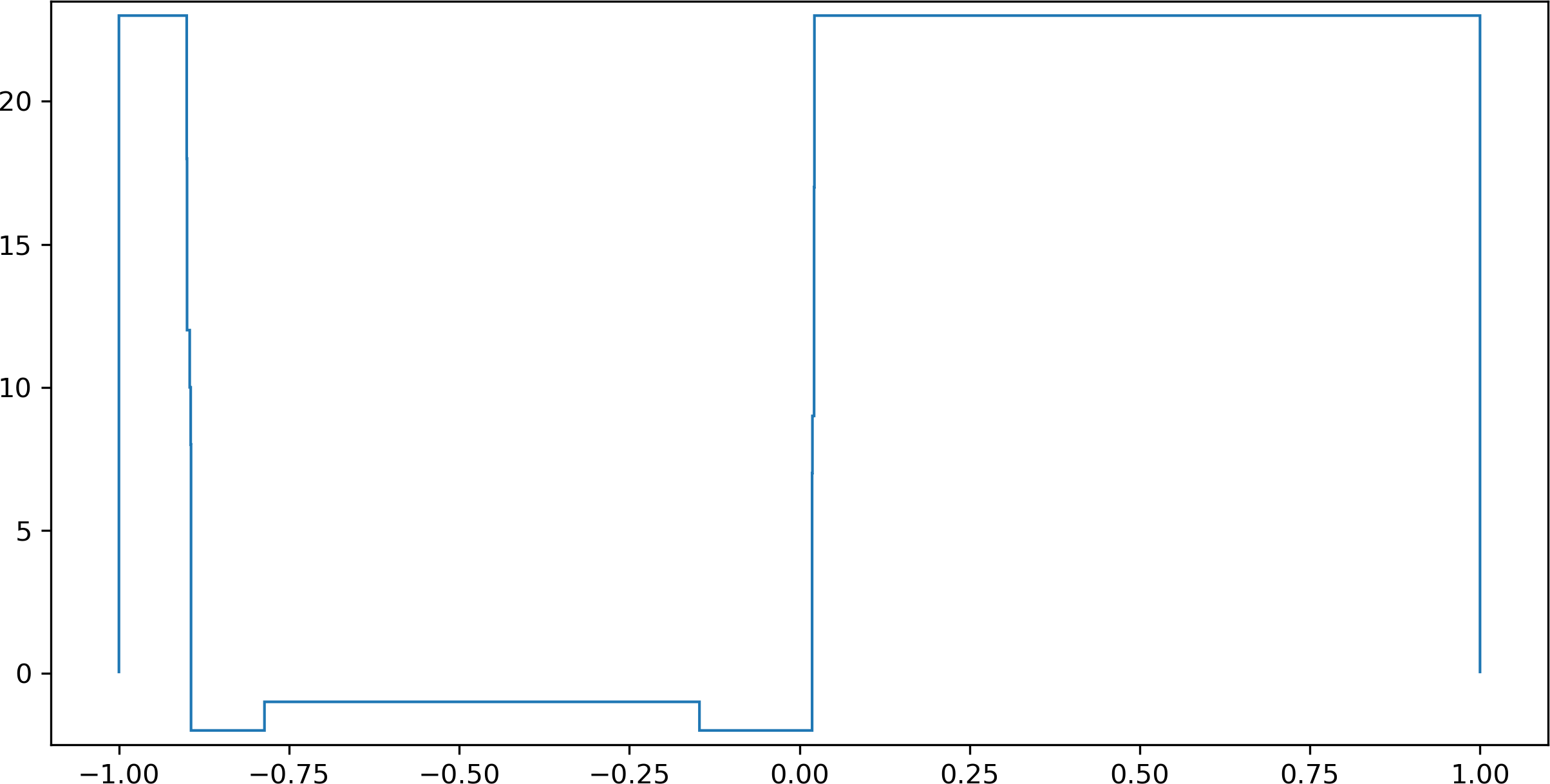}
	\caption*{{\scriptsize SLIP-RT, $\alpha = 1\times 10^{-4}$}}
	\end{subfigure}	
	\vspace{2mm}
	
	\begin{subfigure}[b]{0.49\textwidth}
	\centering
	\includegraphics[width=.8\textwidth]{./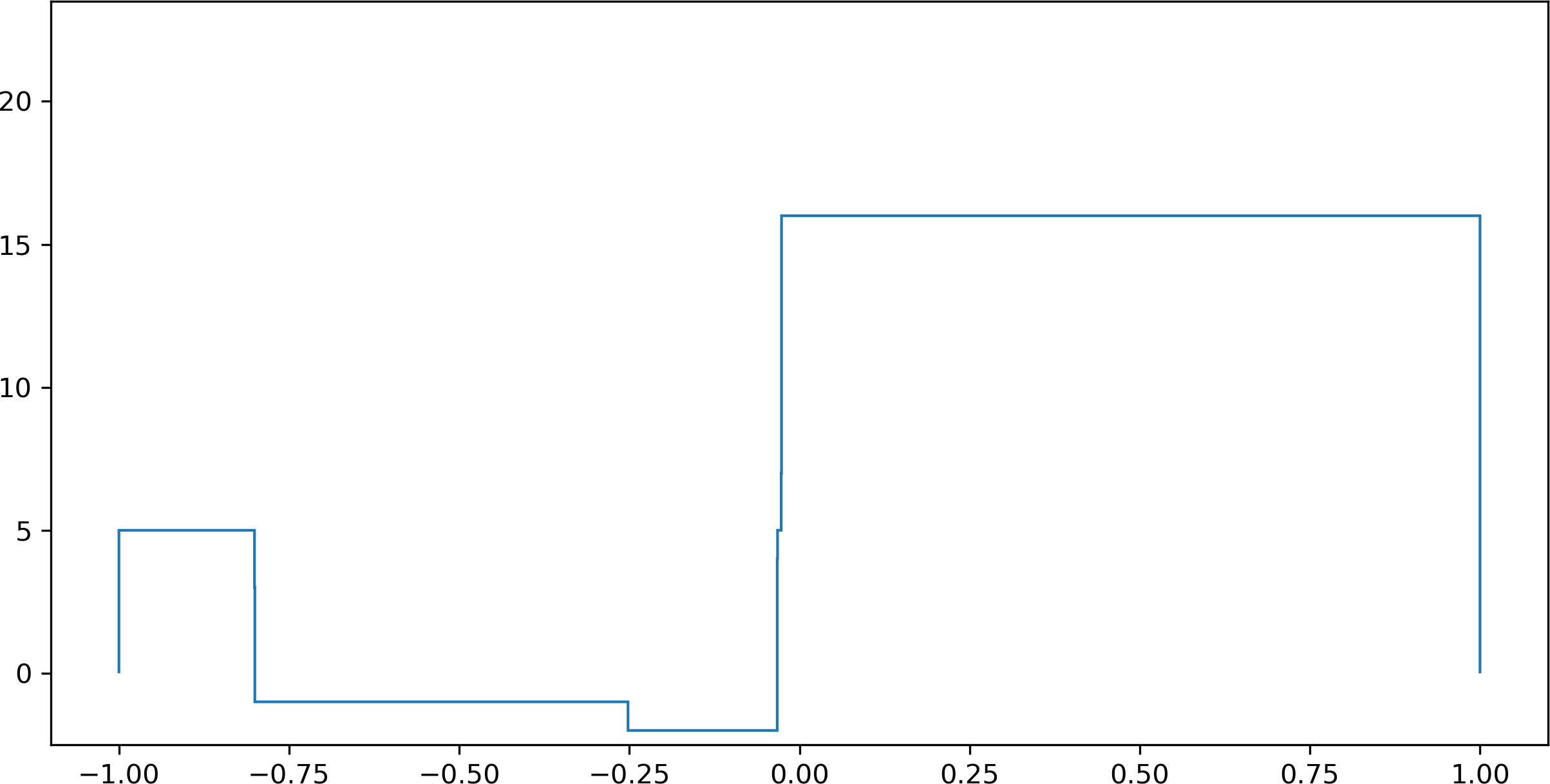}
	\caption*{{\scriptsize SLIP-NR, $\alpha = 5\times 10^{-4}$}}
	\end{subfigure}
	\hfill
	\begin{subfigure}[b]{0.49\textwidth}  
	\centering 
	\includegraphics[width=.8\textwidth]{./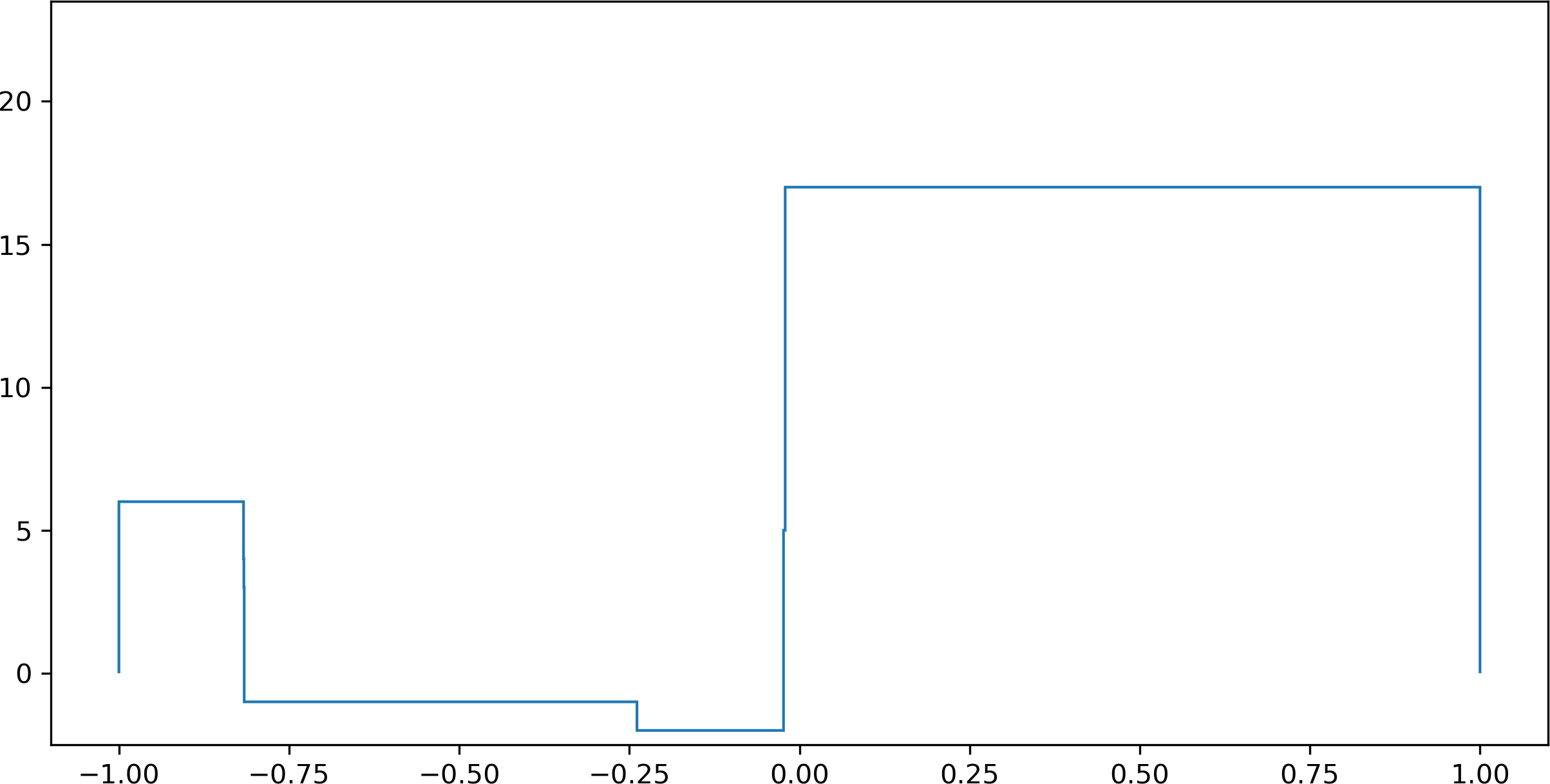}
	\caption*{{\scriptsize SLIP-RT, $\alpha = 5\times 10^{-4}$}}
	\end{subfigure}		
	\vspace{2mm}

	\begin{subfigure}[b]{0.49\textwidth}
	\centering
	\includegraphics[width=.8\textwidth]{./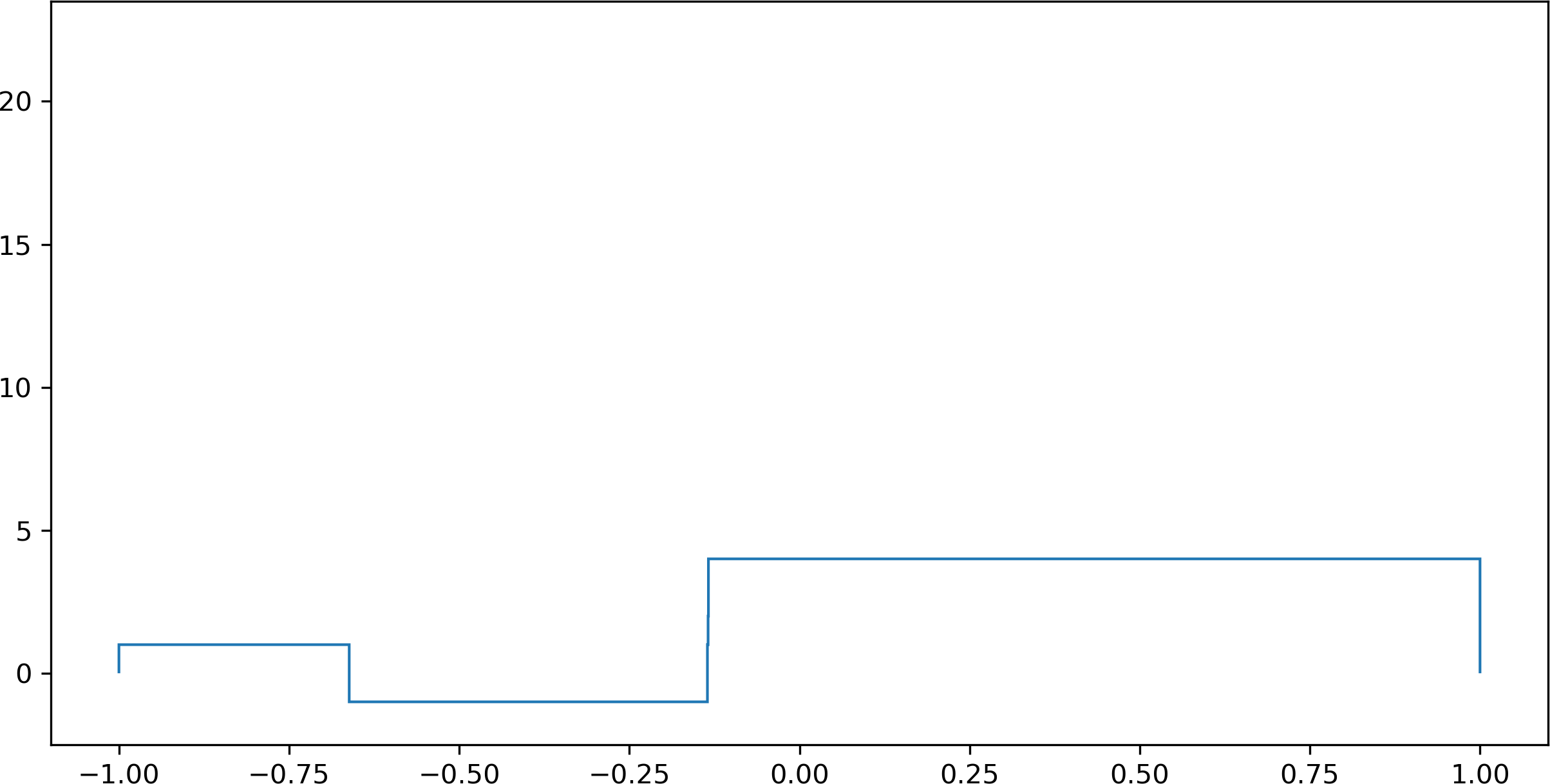}
	\caption*{{\scriptsize SLIP-NR, $\alpha = 1\times 10^{-3}$}}
	\end{subfigure}
	\hfill
	\begin{subfigure}[b]{0.49\textwidth}  
	\centering 
	\includegraphics[width=.8\textwidth]{./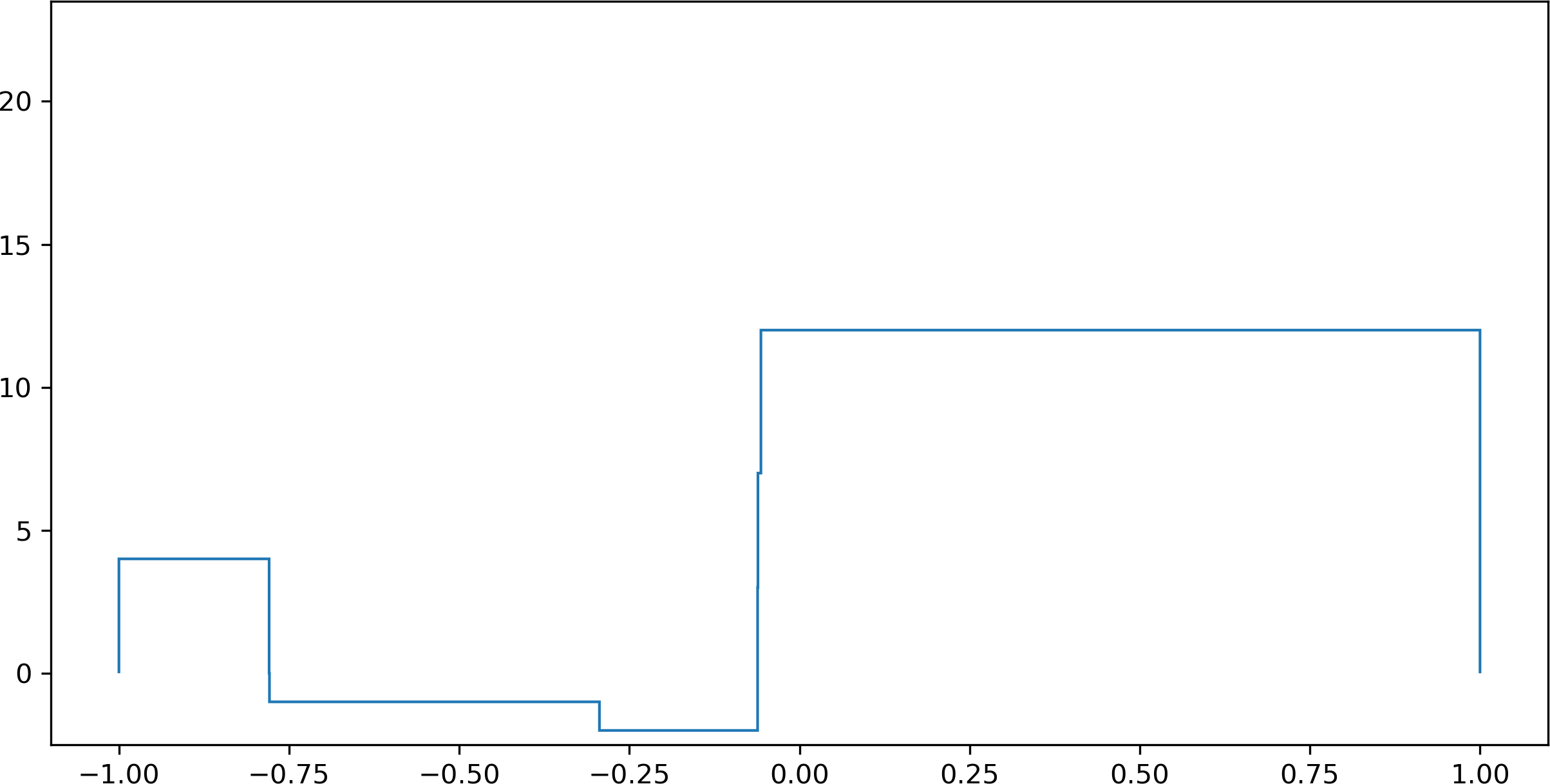}
	\caption*{{\scriptsize SLIP-RT, $\alpha = 1\times 10^{-3}$}}
	\end{subfigure}	

	\caption{Visualization of the controls produced by
	SLIP-NR (left) and SLIP-RT (right).}\label{fig:ctrl_visualization}
\end{figure}

\section{Conclusion}\label{sec:conclusion}
We have provided a convergence proof for a trust-region algorithm in
a metric space, where we have imposed assumptions that allow
for the convergence proof to work even when the trust-region radius
is not reset as is for example done in \cite{leyffer2022sequential}
but doubled upon acceptance of an iterate.

We have verified all of the imposed assumptions for a problem class
of integer optimal control problems on one-dimensional domains.
We have run the algorithm with and without the reset of the trust-region
radius on two (discretized) benchmark problems that fall into this problem
class. The achieved results show a substantial performance gain when avoiding
the reset of the trust-region radius on all instances and
of more than 50\,\% runtime reduction on more than half of the instances.
This runtime improvement comes at the cost that the points returned by SLIP
without trust-region radius reset have worse objective values in most
instances. The magnitude of this quality degradation is less than 10\,\%
on all but one of the considered benchmark instances.
While these observations may be coincidental,
the trust-region radius radius may occasionally yield acceptable points
that are farther away from the current iterate and thus lead to progression
towards stationary points with lower objective values. This heuristic idea
is key in the widely-used simulated annealing algorithm \cite{van1987simulated}.

\section*{Acknowledgment}
The author is grateful to Annika Schiemann (TU Dortmund University),
two anonymous referees, as well as Gerd Wachsmuth and Markus Friedemann
(both BTU Cottbus-Senftenberg) for helpful feedback on the manuscript.

\bibliographystyle{plain}
\bibliography{references}{}

\end{document}